\theoremstyle{plain}
\newtheorem{theorem}{Theorem}[section]
\newtheorem{lemma}[theorem]{Lemma}
\newtheorem{proposition}[theorem]{Proposition}
\theoremstyle{definition}
\newtheorem{remark}[theorem]{Remark}
\numberwithin{equation}{section}
\newcommand\N{\mathbb{N}}
\newcommand\Z{\mathbb{Z}}
\newcommand\R{\mathbb{R}}
\newcommand\C{\mathbb{C}}
\newcommand\D{\mathcal{D}}
\renewcommand\S{\mathcal{S}}
\newcommand\V{\mathcal{V}}
\newcommand\W{\mathcal{W}}
\newcommand\ev[2]{\langle#1,#2\rangle}
\newcommand\compltens{\widehat{\otimes}}
\DeclareMathOperator\im{im}
\DeclareMathOperator\id{id}
\DeclareMathOperator\csn{csn}
\DeclareMathOperator\condM{M}
\DeclareMathOperator\condN{N}
\DeclareMathOperator\condSq{Sq}
\begin{document}

\title[Kernel theorems for Beurling-Bj\"{o}rck spaces]{Kernel theorems for Beurling-Bj\"{o}rck type spaces}
\author[L. Neyt]{Lenny Neyt}
\address{Universit\"{a}t Trier\\ FB IV Mathematik\\ D-54286 Trier\\ Germany}
\thanks{L. Neyt was supported by  the Research Foundation--Flanders through the postdoctoral grant 12ZG921N and by a research fellowship of the Alexander von Humboldt Foundation}

\email{lenny.neyt@UGent.be}

\author[J. Vindas]{Jasson Vindas}
\thanks{J. Vindas gratefully acknowledges support by Ghent University through the BOF-grant 01J04017 and by the Research Foundation--Flanders through the FWO-grant G067621N}
\address{Department of Mathematics: Analysis, Logic and Discrete Mathematics\\ Ghent University\\ Krijgslaan 281\\ 9000 Gent\\ Belgium}
\email{jasson.vindas@UGent.be}

\subjclass[2020]{\emph{Primary.} 46A11, 46E10, 46F05.
 \emph{Secondary.}  42B10, 
 81S30.}
\keywords{Beurling-Bj\"{o}rck spaces; Schwartz kernel theorems; nuclear spaces; ultradifferentiable functions; short-time Fourier transform; time-frequency analysis methods in functional analysis}

\begin{abstract}
We prove new kernel theorems for a general class of  Beurling-Bj\"{o}rck type spaces. In particular, our results cover the classical Beurling-Bj\"{o}rck spaces $\S^{(\omega)}_{(\eta)}$ and $\S^{\{\omega\}}_{\{\eta\}}$ defined via weight functions $\omega$ and $\eta$. 
\end{abstract}

\maketitle

\section{Introduction}

Schwartz type kernel theorems are essential tools in modern functional analysis as they allow us to represent continuous linear mappings via distribution kernels. Establishing kernel theorems or the related property of nuclearity for a given class of locally convex spaces are therefore questions of great relevance for the analysis of continuous linear mappings on such spaces. The study of these questions for locally convex spaces of functions defined through high time-frequency localization conditions has attracted much attention in recent times \cite{B-J-O-GaborWaveFrontSetUltradiffFunc,B-J-O-S21, C-N-2019, D-N-V-CharNuclBBSp, D-N-V-NuclGSSpKernThm, P-P-V-QuasiAnalClassGSParametrixConv,Teofanov2019,Toft2020}, particularly due to potential applications in the microlocal analysis of pseudodifferential and localization operators.

The aim of this article is to provide new kernel theorems for a general class of Beurling-Bj\"{o}rck type spaces. In order to motivate our results and outline the content of the paper, we state here the kernel theorems for the particular but important case of the classical Beurling-Bj\"{o}rck spaces \cite{B-LinPDOGenDist}. Let us introduce some notation. Let $\omega$ be a non-negative continuous function on $\R^{d}$. We consider the following standard and natural conditions \cite{B-LinPDOGenDist, B-M-T-UltraDiffFuncFourierAnal,D-N-V-CharNuclBBSp}:
	\begin{itemize}
		\item[$(\alpha)$] there are $L, C > 0$ such that $\omega(x + y) \leq L(\omega(x) + \omega(y)) + C$ for all $x, y \in \R^{d}$;
		\item[$(\gamma)$] $\log |x| = O( \omega(x))$ as $|x| \to \infty$;
		\item[$\{\gamma\}$] $\log |x| = o( \omega(x))$ as $|x| \to \infty$.
	\end{itemize}
Let $\eta$ be another non-negative continuous function on $\R^{d}$, then the classical Beurling-Bj\"{o}rck space (of Beurling type) $\S^{(\omega)}_{(\eta)}(\R^{d})$ consists of all those $\varphi \in \S^{\prime}(\R^{d})$ such that both $\varphi$ and $\widehat{\varphi}$ are continuous functions and
\begin{equation} 
\label{eq1BBdef} \operatorname*{sup}_{x \in \R^{d}} |\varphi(x)| e^{\lambda \omega(x)} + \operatorname*{sup}_{ \xi \in \R^{d}} |\widehat{\varphi}(\xi)| e^{\lambda \eta(\xi)} < \infty 
\end{equation}
for all $\lambda>0$, endowed with its natural Fr\'{e}chet space topology. Here $\widehat{\varphi}$ denotes the Fourier transform of $\varphi$. Likewise, the Beurling-Bj\"{o}rck space of Roumieu type $\S^{\{\omega\}}_{\{\eta\}}(\R^{d})$ is defined as the $(LB)$-space consisting of those $\varphi$ that satisfy \eqref{eq1BBdef} for some $\lambda>0$.  We will use $\S^{[\omega]}_{[\eta]}(\R^{d})$ as a common notation for $\S^{(\omega)}_{(\eta)}(\R^{d})$ and $\S^{\{\omega\}}_{\{\eta\}}(\R^{d})$, and a similar convention holds for other occurrences of the symbol $[\:]$.

We may now state our new kernel theorem for the classical Beurling-Bj\"{o}rck spaces. The notation $\omega_1 \oplus \omega_2$ stands for $(\omega_1 \oplus \omega_2)(x,y)=\omega_1(x)+\omega_2(y)$, $(x,y)\in \R^{d_1+d_2}$.

	\begin{theorem}
		\label{t:KernelTheoremsOriginalBBsp}
		Let $\omega_{j}$ and $\eta_{j}$ be weight functions on $\R^{d_{j}}$ satisfying $(\alpha)$ and $[\gamma]$ for $j = 1, 2$. The following canonical isomorphisms of locally convex spaces hold,
			\[ \S^{[\omega_{1} \oplus \omega_{2}]}_{[\eta_{1} \oplus \eta_{2}]}(\R^{d_{1} + d_{2}}) \cong \S^{[\omega_{1}]}_{[\eta_{1}]}(\R^{d_{1}}) \compltens \S^{[\omega_{2}]}_{[\eta_{2}]}(\R^{d_{2}}) \cong \mathcal{L}_{\beta}(\S^{[\omega_{1}]}_{[\eta_{1}]}(\R^{d_{1}})^{\prime}, \S^{[\omega_{2}]}_{[\eta_{2}]}(\R^{d_{2}})) \]
		and
			\[ \S^{[\omega_{1} \oplus \omega_{2}]}_{[\eta_{1} \oplus \eta_{2}]}(\R^{d_{1} + d_{2}})^{\prime} \cong \S^{[\omega_{1}]}_{[\eta_{1}]}(\R^{d_{1}})^{\prime} \compltens \S^{[\omega_{2}]}_{[\eta_{2}]}(\R^{d_{2}})^{\prime} \cong \mathcal{L}_{\beta}(\S^{[\omega_{1}]}_{[\eta_{1}]}(\R^{d_{1}}), \S^{[\omega_{2}]}_{[\eta_{2}]}(\R^{d_{2}})^{\prime}) . \]
	\end{theorem}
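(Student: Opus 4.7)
The overall strategy is to reduce the theorem to abstract functional-analytic facts about nuclear (F)- and (DF)-spaces by first showing that the spaces $\S^{[\omega]}_{[\eta]}(\R^{d})$ are nuclear, and then identifying the completed tensor product $\S^{[\omega_1]}_{[\eta_1]}(\R^{d_1}) \compltens \S^{[\omega_2]}_{[\eta_2]}(\R^{d_2})$ with $\S^{[\omega_1 \oplus \omega_2]}_{[\eta_1 \oplus \eta_2]}(\R^{d_1+d_2})$. Once these two facts are in hand, the operator-space isomorphisms follow from the standard identifications $\mathcal{L}_{\beta}(E',F) \cong E \compltens F$, valid whenever $E$ is a nuclear (F)- or (DF)-space and $F$ is complete, together with duality $(E \compltens F)' \cong E' \compltens F'$ under the same nuclearity hypotheses.

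For the nuclearity, my plan is to use the short-time Fourier transform (STFT) with a suitable Gaussian (or more generally Gelfand-Shilov) window $\psi$. The map $V_{\psi} \colon \varphi \mapsto V_{\psi}\varphi$ should realise $\S^{[\omega]}_{[\eta]}(\R^d)$ as a topological subspace of a weighted sequence space $\Lambda^{[\omega \oplus \eta]}$ (via sampling on a sufficiently dense time-frequency lattice), with a bounded left inverse given by a Gabor synthesis operator. Under conditions $(\alpha)$ and $[\gamma]$ on the weights, $\Lambda^{[\omega \oplus \eta]}$ is a nuclear K\"othe-type (F)- or (LB)-space (the Grothendieck-Pietsch criterion applied to the weight matrix), and nuclearity passes to complemented subspaces. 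This is where the weight conditions enter: $(\alpha)$ is needed to make the STFT well-behaved under translations/modulations, while $[\gamma]$ guarantees that polynomials are dominated by $e^{\lambda \omega}$ so that the weight system is genuinely rapidly increasing.

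To identify $\S^{[\omega_1 \oplus \omega_2]}_{[\eta_1 \oplus \eta_2]}(\R^{d_1 + d_2})$ with the completed tensor product, the same STFT machinery applies on the product: with window $\psi_1 \otimes \psi_2$, the STFT of a product-domain function splits as $V_{\psi_1}\varphi_1 \otimes V_{\psi_2}\varphi_2$ on elementary tensors, and the associated sequence space $\Lambda^{[(\omega_1 \oplus \omega_2) \oplus (\eta_1 \oplus \eta_2)]}$ is (up to permutation) the tensor product of the factor sequence spaces. One then checks that elementary tensors are dense in $\S^{[\omega_1 \oplus \omega_2]}_{[\eta_1 \oplus \eta_2]}(\R^{d_1 + d_2})$ (reducing to density of Hermite expansions or, equivalently, of Gabor coefficients supported on finite lattice boxes), and that the topology induced from the product space coincides with the projective (equivalently, by nuclearity, injective) tensor topology. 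In the Roumieu case this identification requires some care since one works with (DF)- or (LB)-structures; here I would exploit the fact that the sequence-space model is a regular (LB)-space to transfer the isomorphism.

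The main obstacle, as usual, is the Roumieu setting: Fr\'echet-space machinery (Grothendieck's classical theorems on $\compltens$, $\mathcal{L}_{\beta}$, and duality) is cleanest for nuclear (F)-spaces, whereas $\S^{\{\omega\}}_{\{\eta\}}(\R^d)$ is a nuclear (DF)- (in fact, (LB)-) space. Consequently the interchange of projective limits, inductive limits, duals and tensor products needs to be justified by verifying regularity and completeness properties of the inductive spectrum, which the STFT/sequence-space model is designed to supply. Once the sequence-space description is secured in both cases simultaneously via the bracket notation $[\:]$, the two chains of isomorphisms in the theorem follow uniformly.
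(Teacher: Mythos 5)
Your high-level skeleton agrees with the paper's: establish nuclearity, identify $\S^{[\omega_{1}\oplus\omega_{2}]}_{[\eta_{1}\oplus\eta_{2}]}(\R^{d_{1}+d_{2}})$ with the completed tensor product, and then invoke the standard abstract isomorphisms for nuclear Fr\'echet and $(DF)$-spaces. But the technical route you propose has a genuine gap at its core. First, your window choice fails in general: under only $(\alpha)$ and $[\gamma]$ there is no reason for a Gaussian (or Hermite functions) to belong to $\S^{[\omega]}_{[\eta]}(\R^{d})$ --- take for instance $\omega(x)=|x|^{3}$, which satisfies $(\alpha)$ and $[\gamma]$ but forces a decay faster than Gaussian; consequently your later reduction of density of elementary tensors to ``density of Hermite expansions'' also breaks down, since Hermite expansions do not converge in (or even belong to) these spaces in general. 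The window must be taken from the space itself (whose non-triviality then has to be handled), which is exactly what the paper does, choosing $\psi$ in the space and a translate $\gamma$ with $(\gamma,\check{\psi})_{L^{2}}\neq 0$.

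Second, and more seriously, the step you rely on --- sampling the STFT on a lattice to realize $\S^{[\omega]}_{[\eta]}$ as a complemented subspace of a K\"othe sequence space with a continuous Gabor synthesis left inverse --- is asserted, not proved, and it is precisely the kind of discretization machinery that is unavailable at this level of generality: without the convexity condition $(\delta)$ one has no description by derivative bounds and no established Gabor frame theory (invertibility of the frame operator, continuity of analysis/synthesis on the spaces) for such weights; this is the very obstruction the paper is written to circumvent. The paper instead stays with the \emph{continuous} STFT: Lemmas \ref{l:STFTSpecific} and \ref{l:AdjointSTFTSpecific} show that $V_{\check{\psi}}$ maps into the weighted space $C_{[\W\otimes\V]}(\R^{2d})$ of continuous functions and that $V^{*}_{\gamma}$ maps back, with the reconstruction formula \eqref{eq:STFTReconstructBBSp}; nuclearity is then obtained from the Grothendieck--Pietsch summability criterion together with the identification $\S^{[\V]}_{[\W]}=\S^{[\V]}_{[\W],1}$, and the tensor identification comes from the known $\varepsilon$-tensor representation $C_{[\V]}\widehat{\otimes}_{\varepsilon}C_{[\W]}\cong C_{[\V\otimes\W]}$, with density of $\im\iota$ deduced from that representation rather than from any orthogonal expansion. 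Finally, in the Roumieu case your appeal to ``regularity of the sequence-space model'' presupposes the unproven discrete model; the paper instead has to prove the projective description $\S^{\{\V\}}_{\{\W\}}=\overline{\S}^{\{\V\}}_{\{\W\}}$ (Theorem \ref{t:ProjDescr}, via the Nachbin-family lemmas) before the continuity of the canonical embedding on the projective tensor product can even be checked. As it stands, your argument would need all of this supplied before it could be called a proof.
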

	
It should be noted that, as customary, the completed tensor products in Theorem \ref{t:KernelTheoremsOriginalBBsp} are with respect to either the $\varepsilon$-topology or the projective topology \cite{G-ProdTensEspNucl,P-NuclLCS,T-TopVecSp}, which is justified by the recent characterization of nuclearity for Beurling-Bj\"{o}rck spaces in terms of the conditions $(\alpha)$ and $[\gamma]$, see \cite[Theorem 1.1]{D-N-V-CharNuclBBSp}. 

 Theorem \ref{t:KernelTheoremsOriginalBBsp} is a direct improvement to  \cite[Theorem 7.3]{D-N-V-NuclGSSpKernThm}, where the kernel isomorphisms were attained under the more restrictive hypothesis that the $\omega_{j}$ are Braun-Meise-Taylor weight functions \cite{B-M-T-UltraDiffFuncFourierAnal}, that is, additionally assuming that they are radially increasing functions that satisfy the following condition from the theory of ultra\-differentiable functions (see e.g. \cite[p.~206]{B-M-T-UltraDiffFuncFourierAnal} or \cite[p.~211]{D-N-V-NuclGSSpKernThm}):
 	\begin{itemize}
		\item[$(\delta)$] $\phi : [0, \infty) \rightarrow [0, \infty)$, $\phi(x) = \omega(e^{x})$, is convex.
	\end{itemize}
That $(\delta)$ should play no role in obtaining kernel theorems is strongly suggested by the characterization of nuclearity from  \cite{D-N-V-CharNuclBBSp} quoted above. However, removing it from the set of hypotheses appears to require a new treatment of the problem. In fact, without $(\delta)$  one is not longer able to describe ultradifferentiability in terms of bounds on derivatives, which makes it unclear whether the standard way \cite{Schwartz57,K-UltraDist3} to prove kernel theorems applies in this situation. We shall overcome this difficulty with a new idea that combines mapping properties of the short-time Fourier transform with the representation of weighted spaces of continuous functions as $\varepsilon$-tensor products (cf. \cite{B-M-S-ProjDescrWeighIndLim}).

The plan of this article is as follows. In Section \ref{sec:WeighFuncSystem} we discuss properties of the families of weight function systems to be employed in this work. We introduce in Section \ref{sec:BBSp} natural generalizations of the Beurling-Bj\"{o}rck spaces and characterize when they are nuclear in Theorem \ref{t:NuclearityBBSp}. Finally, Section \ref{sec:KernelThms} is devoted to the new kernel theorems, our main result Theorem \ref{t:KernelTheorems} is a general form of Theorem \ref{t:KernelTheoremsOriginalBBsp}. To show the kernel theorems in the Roumieu case, we need to employ its so-called projective description, which we also establish in Theorem \ref{t:ProjDescr} using the short-time Fourier transform.

\section{Weight function systems}
\label{sec:WeighFuncSystem}

A \emph{weight function (on $\R^{d}$)} is a continuous function $\R^{d} \rightarrow [1, \infty)$. Set $\R_{+} = (0, \infty)$. Then, a \emph{weight function system (on $\R^{d}$)} is a family $\W = \{ w^{\lambda} : \lambda \in \R_{+} \}$ of weight functions on $\R^{d}$ such that $w^{\lambda}(x) \leq w^{\mu}(x)$ for all $x \in \R^{d}$ and $\mu \leq \lambda$. We consider the following conditions on a weight function system $\W$:
	\begin{itemize}
		\item[$(\condM)$] $\forall \lambda \in \R_{+} ~ \exists \mu, \nu \in \R_{+} ~ \exists C > 0 ~ \forall x, y \in \R^{d} ~ :~  w^\lambda(x+y)\leq Cw^{\mu}(x) w^{\nu}(y) $; 
		\item[$\{\condM\}$] $\forall  \mu, \nu \in \R_{+} ~ \exists \lambda \in \R_{+} ~ \exists C > 0 ~ \forall x, y \in \R^{d} ~ :~  w^\lambda(x+y)\leq Cw^{\mu}(x) w^{\nu}(y)$;
		\item[$(\condSq)$] $\forall \lambda, \mu \in \R_{+} ~ \exists \nu \in \R_{+} ~ \exists C > 0 ~ \forall x \in \R^{d} ~ : ~ w^{\lambda}(x) w^{\mu}(x) \leq C w^{\nu}(x)$;
		\item[$\{\condSq\}$] $\forall \nu \in \R_{+} ~ \exists \lambda, \mu \in \R_{+} ~ \exists C > 0 ~ \forall x \in \R^{d} ~ : ~ w^{\lambda}(x) w^{\mu}(x) \leq C w^{\nu}(x)$;
		\item[$(\condN)$] $\forall \lambda \in \R_{+}  ~ \exists \mu \in \R_{+} ~ :~  w^{\lambda} / w^{\mu} \in L^{1}$;
		\item[$\{\condN\}$] $\forall \mu \in \R_{+}  ~ \exists \lambda \in \R_{+} ~ :~  w^{\lambda} / w^{\mu} \in L^{1}$.
	\end{itemize}
We recall again that our convention is to employ  $[\:]$ as a common notation to treat both symbols $(\:)$ and $\{\}$ simultaneously. In addition, we will often first state assertions for $(\:)$ followed in parenthesis by the corresponding statement for $\{\:\}$. 
	
Given functions $w$ on $\R^{d_{1}}$ and $v$ on $\R^{d_{2}}$, we define $(w \otimes v)(x) = w(x_{1}) v(x_{2})$ for any $x = (x_{1}, x_{2}) \in \R^{d_{1} + d_{2}}$. Accordingly, for  weight function systems $\W$ on $\R^{d_{1}}$ and $\V$ on $\R^{d_{2}}$ we write $\W \otimes \V$ for the weight function system $\{ w^{\lambda} \otimes v^{\lambda} : \lambda \in \R_{+} \}$ on $\R^{d_{1} + d_{2}}$. Note that $\W \otimes \V$ satisfies the condition $[\condM]$, $[\condSq]$, or $[\condN]$ if and only if $\W$ and $\V$ do so. The function $\check{w}$ stands for the reflection about the origin of $w$, that is, $\check{w}(x)=w(-x)$; we also denote as $\check{\W}=\{\check{w}^{\lambda}:\lambda \in \R_{+}\}$ the corresponding weight function system obtained by reflecting $\W$.
 	
The following lemma will be needed later on. We denote by $C_{0}(\R^{d})$ the space of continuous functions on $\R^{d}$ vanishing at $\infty$.
	
	\begin{lemma}[{\cite[Lemma 3.1]{D-N-V-NuclGSSpKernThm}}]
		\label{l:CondNinC0}
		Let $\W$ be a weight function system satisfying $[\condM]$ and $[\condN]$. Then,
		\[ \forall \lambda \in \R_{+} ~ \exists \mu \in \R_{+} ~ ( \forall \mu \in \R_{+} ~ \exists \lambda \in \R_{+} ) ~ : ~ w^{\lambda} / w^{\mu} \in L^{1}(\R^{d}) \cap C_{0}(\R^{d}) . \]
	\end{lemma}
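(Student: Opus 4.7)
The plan is to upgrade the $L^{1}$ conclusion of $[\condN]$ to $L^{1} \cap C_{0}$ by dominating the pointwise values of $w^{\lambda}/w^{\mu}$ by a local average of a known $L^{1}$ weight ratio; the moderate-growth-type condition $[\condM]$ will supply the needed quasi-translation-invariance, and the $C_{0}$ conclusion will then drop out from the standard fact that unit-ball averages of an $L^{1}$ function vanish at infinity.

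The work is just a bookkeeping of indices. In the Beurling case, starting from $\lambda$, three successive applications of $(\condM)$, $(\condN)$, $(\condM)$ produce: first $\lambda_{1}$ with $w^{\lambda}(x) \leq K_{1} w^{\lambda_{1}}(x+y)$ for all $x$ and all $|y| \leq 1$ (apply $(\condM)$ to $\lambda$ and use the local boundedness of the resulting $\nu_{1}$-factor on the unit ball); then $\mu_{1}$ with $f := w^{\lambda_{1}}/w^{\mu_{1}} \in L^{1}(\R^{d})$ (apply $(\condN)$ to $\lambda_{1}$); and finally $\mu$ with $1/w^{\mu}(x) \leq K_{2}/w^{\mu_{1}}(x+y)$ for all $x$ and $|y| \leq 1$ (apply $(\condM)$ to $\mu_{1}$). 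Multiplying the two pointwise inequalities gives
\[
\frac{w^{\lambda}(x)}{w^{\mu}(x)} \leq K_{1} K_{2} \, f(x+y) \qquad \text{for all } x \in \R^{d} \text{ and } |y| \leq 1.
\]
Taking $y = 0$ yields $w^{\lambda}/w^{\mu} \leq K_{1} K_{2} f \in L^{1}(\R^{d})$, and integrating over the unit ball in $y$ produces $w^{\lambda}(x)/w^{\mu}(x) \leq K \int_{B(x,1)} f(z) \, dz$, whose right-hand side tends to $0$ as $|x| \to \infty$ by absolute continuity of the $L^{1}$-integral. Both conclusions follow.

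In the Roumieu case the same three-step chain runs with the quantifier orders of $\{\condM\}$ and $\{\condN\}$ inverted at each step: starting from the prescribed $\mu$, pick $\mu_{1}$ from $\{\condM\}$ with $w^{\mu_{1}}(x+y) \leq C w^{\mu}(x) w^{\mu}(y)$, then $\lambda_{1}$ from $\{\condN\}$ with $w^{\lambda_{1}}/w^{\mu_{1}} \in L^{1}$, and finally $\lambda$ from $\{\condM\}$ with $w^{\lambda}(x+y) \leq C w^{\lambda_{1}}(x) w^{\nu}(y)$. I anticipate no genuine analytic obstacle beyond keeping the quantifier directions straight in this Roumieu chain; each of the three applications takes its input freely, so the compositions go through and yield a $\lambda$ depending on the initially prescribed $\mu$, as required.
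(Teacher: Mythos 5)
Your proof is correct and is essentially the argument behind the result that the paper itself does not prove but only cites (\cite[Lemma 3.1]{D-N-V-NuclGSSpKernThm}): a three-step $[\condM]$--$[\condN]$--$[\condM]$ chain yielding $w^{\lambda}(x)/w^{\mu}(x)\leq K\,f(x+y)$ for all $x$ and $|y|\leq 1$ with $f\in L^{1}(\R^{d})$, followed by taking $y=0$ for integrability and averaging over the unit ball for the decay, with the quantifier bookkeeping in the Roumieu case handled correctly. The only cosmetic point is the justification of $\int_{B(x,1)}f\to 0$: this follows from the vanishing of the tails $\int_{|z|\geq |x|-1}f\,dz$ of an $L^{1}$ function (the sets $B(x,1)$ have fixed measure, so ``absolute continuity of the integral'' is not quite the right phrase).
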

	
Let us now discuss how the weight functions considered in the introduction fit into the weight function system setting. Let $\omega$ be a non-negative continuous function on $\R^{d}$. We then associate to it the weight function system
	\[ \W_{\omega} = \{ e^{\frac{1}{\lambda} \omega} : \lambda \in \R_{+}  \} .  \]
We now have the following connections between the various conditions on $\omega$ and $\W_{\omega}$. We call a function $\omega$ radially increasing if $\omega(x) \leq \omega(y)$ whenever $|x| \leq |y|$.

	\begin{lemma}
		\label{l:WeighFuncSystemsSingleWeight}
		Let $\omega$ be a non-negative continuous function on $\R^{d}$. 
			\begin{itemize}
				\item[$(a)$] $\W_{\omega}$ satisfies $[\condM]$ if and only if $\omega$ satisfies $(\alpha)$.
				\item[$(b)$] $\W_{\omega}$ satisfies $[\condSq]$.
				\item[$(c)$] $\W_{\omega}$ satisfies $[\condN]$ if $\omega$ satisfies $[\gamma]$. If $\omega$ is radially increasing, then the converse implication also holds.
			\end{itemize}
	\end{lemma}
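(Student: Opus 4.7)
Part $(a)$ is a routine translation via exponentiation and logarithm: the inequality $e^{\omega(x+y)/\lambda} \leq C' e^{\omega(x)/\mu + \omega(y)/\nu}$ unfolds, upon taking $\log$ and using $\omega \geq 0$, into $\omega(x+y) \leq (\lambda/\mu)\omega(x) + (\lambda/\nu)\omega(y) + \lambda \log C'$, which is $(\alpha)$ with $L = \max(\lambda/\mu, \lambda/\nu)$. For the forward implications, given $(\alpha)$ I choose $\mu = \nu = \lambda/L$ in the Beurling case and $\lambda = L \max(\mu,\nu)$ in the Roumieu case. The converses follow by specializing $\lambda = 1$ in $(M)$ (which yields $L = \max(1/\mu, 1/\nu)$) and $\mu = \nu = 1$ in $\{M\}$ (which yields $L = \lambda$). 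Part $(b)$ is immediate: setting $\nu = \lambda\mu/(\lambda+\mu)$ in the Beurling case, or $\lambda = \mu = 2\nu$ in the Roumieu case, produces $1/\lambda + 1/\mu = 1/\nu$, and the inequality reduces to an equality with $C = 1$.

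\textbf{Part (c), forward direction.} Observe that $w^\lambda/w^\mu = e^{-c\omega}$ with $c = 1/\mu - 1/\lambda$, and this ratio can be integrable only if $c > 0$ (otherwise $w^\lambda/w^\mu \geq 1$). The crux is the elementary estimate: if $\log|x| \leq K\omega(x) + B$ for $|x|$ large, then $e^{-c\omega(x)} \leq e^{cB/K} |x|^{-c/K}$ on $\{|x| > 1\}$, which is in $L^1(\R^d)$ provided $c/K > d$. Under $(\gamma)$ the constant $K$ is fixed; given $\lambda$, I take $\mu$ small enough that $c = 1/\mu - 1/\lambda > dK$. Under $\{\gamma\}$ the constant $K$ may be taken arbitrarily small; given $\mu$, I pick any $\lambda > \mu$ (e.g.\ $\lambda = 2\mu$), set $c = 1/\mu - 1/\lambda > 0$, and then invoke $\{\gamma\}$ with $K = c/(2d)$.

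\textbf{Part (c), converse (assuming $\omega$ radially increasing).} Radial monotonicity forces $\omega(x) = \omega^*(|x|)$ for a nondecreasing $\omega^* : [0,\infty) \to [0,\infty)$. For any $c > 0$ with $e^{-c\omega} \in L^1$, integrating over the annulus $\{r \leq |x| \leq 2r\}$ yields
\[ c_d(2^d - 1)\, r^d \, e^{-c\omega^*(2r)} \leq \int_{\R^d} e^{-c\omega(x)}\, dx < \infty, \]
so $r^d e^{-c\omega^*(2r)} \to 0$ as $r \to \infty$; taking logarithms gives $\omega(x) \geq (d/c)\log|x| + O_c(1)$ for $|x|$ large. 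In the Beurling case, a single application of $(N)$ (say with $\lambda = 1$) produces one such $c > 0$, hence $\log|x| = O(\omega(x))$, which is $(\gamma)$. In the Roumieu case, for each $\mu$ one may assume the $\lambda(\mu)$ supplied by $\{N\}$ satisfies $\lambda(\mu) > \mu$, so the admissible exponent $c(\mu) = 1/\mu - 1/\lambda(\mu)$ lies in $(0, 1/\mu)$ and hence can be made arbitrarily small by letting $\mu \to \infty$; thus $d/c(\mu) \to \infty$, yielding $\log|x| = o(\omega(x))$, which is $\{\gamma\}$. The only delicate point in the argument is this last quantifier manipulation: exploiting the freedom to vary $\mu$ is essential, since a single application of $\{N\}$ only yields $(\gamma)$, not $\{\gamma\}$.
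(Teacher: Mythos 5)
Your proposal is correct and follows essentially the same route as the paper: parts $(a)$ and $(b)$ by direct exponentiation/quantifier bookkeeping (which the paper omits as straightforward), the forward part of $(c)$ via the polynomial-decay bound $e^{-c\omega(x)}\lesssim |x|^{-c/K}$, and the converse via bounding $|x|^{d}e^{-c\omega(x)}$ by the integral of $e^{-c\omega}$ over a region where radial monotonicity applies (you use an annulus where the paper uses the ball $\overline{B}(0,|y|)$), followed by the same quantifier analysis to distinguish $(\gamma)$ from $\{\gamma\}$. No gaps; the only cosmetic remark is that boundedness of $r^{d}e^{-c\omega^{*}(2r)}$, which your displayed inequality gives directly, already suffices—the convergence to $0$ is not needed.
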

	
	\begin{proof}
		The verification of $(a)$ and $(b)$ is straightforward, so we omit it. To show $(c)$, we first note that $[\condN]$ holds for $\W_{\omega}$ if and only if for some $\varepsilon > 0$ (for every $\varepsilon > 0$) we have $e^{- \varepsilon \omega(\cdot)} \in L^{1}(\R^{d})$. It is clear that this is true when $\omega$ satisfies $[\gamma]$. Now, suppose that $\omega$ is radially increasing and that $\W_{\omega}$ satisfies $[\condN]$. Let $\varepsilon > 0$ be as above (let $\varepsilon > 0$ be arbitrary), then
			\[ |y|^{d} e^{-\varepsilon \omega(y)} \leq \frac{1}{|\overline{B}(0, 1)|} \int_{\overline{B}(0, |y|)} e^{-\varepsilon \omega(x)} dx \leq \frac{1}{|\overline{B}(0, 1)|} \int_{\R^{d}} e^{-\varepsilon \omega(x)} dx < \infty , \]
		for all $y \in \R^{d}$, where $\overline{B}(0, R) = \{ x \in \R^{d} : |x| \leq R \}$ for any $R \geq 0$. Hence we see that
			\[ \exists \varepsilon > 0 ~ (\forall \varepsilon > 0) ~ \exists C > 0 ~ \forall y \in \R^{d} : \quad d \log |y| \leq \varepsilon \omega(y) + C , \]
		whence we conclude that $[\gamma]$ holds.
	\end{proof}

Given a non-negative function $w$, we introduce the seminorm
	\[ \|\varphi\|_{w} = \operatorname*{sup}_{x \in \R^{d}} |\varphi(x)| w(x) . \]
Then, we denote by $C_{w} = C_{w}(\R^{d})$ the space of all continuous functions $\Phi$ on $\R^{d}$ such that $\|\Phi\|_{w} < \infty$. Naturally, if $w$ is positive and continuous, then $C_{w}$ is a Banach space. Given a weight function system $\W$, we define the spaces
	\[ C_{(\W)}= \varprojlim_{\lambda \rightarrow 0^{+}} C_{w^{\lambda}} , \qquad C_{\{\W\}} = \varinjlim_{\lambda \rightarrow \infty} C_{w^{\lambda}} . \]
Then $C_{(\W)}$ is a Fr\'{e}chet space and $C_{\{\W\}}$ is an $(LB)$-space. Furthermore, $C_{\{\W\}}$ is always complete \cite{B-B91}.

In the Roumieu case we will consider the so-called projective description of $C_{\{\W\}}$. The \emph{maximal Nachbin family associated to $\W$}, denoted by $\overline{V}(\W)$, is given by the space consisting of all non-negative upper semicontinuous functions $w$ on $\R^{d}$ such that $\sup_{x \in \R^{d}} w(x) / w^{\lambda}(x) < \infty$ for all $\lambda \in \R_{+}$. The \emph{projective hull of $C_{\{\W\}}$} is defined as the space $C\overline{V}(\W)$ of all continuous functions $\varphi$ on $\R^{d}$ such that $\|\varphi\|_{w} < \infty$ for all $w \in \overline{V}(\W)$. We endow $C\overline{V}(\W)$ with the locally convex topology generated by the system of seminorms $\{ \|\cdot\|_{w} : w \in \overline{V}(\W) \}$. The spaces $C_{\{\W\}}$ and $C\overline{V}(\W)$ are always equal as sets, which follows directly from the next lemma.
	
	\begin{lemma}
		\label{l:BoundednessNachbinFamily}
		Let $\W$ be a weight function system. For any set $B$ of functions on $\R^{d}$ we have that $\sup_{\varphi \in B} \sup_{x \in \R^{d}} |\varphi(x)| w^{\lambda}(x) < \infty$ for some $\lambda \in \R_{+}$ if and only if $\sup_{\varphi \in B} \sup_{x \in \R^{d}} |\varphi(x)| w(x) < \infty$ for every $w \in \overline{V}(\W)$.
	\end{lemma}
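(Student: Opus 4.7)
The forward direction is immediate. If $M := \sup_{\varphi \in B} \|\varphi\|_{w^{\lambda}} < \infty$ for some $\lambda \in \R_{+}$ and $w \in \overline{V}(\W)$, then $C := \sup_{x \in \R^{d}} w(x)/w^{\lambda}(x) < \infty$ by definition of $\overline{V}(\W)$, whence
\[ \sup_{\varphi \in B} \sup_{x \in \R^{d}} |\varphi(x)| w(x) \leq C M. \]

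The converse I establish by contrapositive. Assume $\sup_{\varphi \in B} \|\varphi\|_{w^{\lambda}} = \infty$ for every $\lambda \in \R_{+}$; I will construct $w \in \overline{V}(\W)$ for which $B$ is unbounded. For each $n \in \N$, applying the assumption with $\lambda = n$ furnishes $\varphi_{n} \in B$ and $x_{n} \in \R^{d}$ satisfying $|\varphi_{n}(x_{n})|\, w^{n}(x_{n}) \geq n\, 2^{n}$. By passing to a subsequence and relabeling I may assume the $x_{n}$ are pairwise distinct and, moreover, that there exist radii $r_{n} > 0$ making the closed balls $\overline{B}(x_{n}, r_{n})$ pairwise disjoint -- this is straightforward when $|x_{n}| \to \infty$, and requires a sparsification step if $(x_{n})$ clusters at some finite point. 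Pick continuous bump functions $\phi_{n} : \R^{d} \to [0,1]$ with $\phi_{n}(x_{n}) = 1$ and $\supp \phi_{n} \subseteq \overline{B}(x_{n}, r_{n})$, and define
\[ w(x) := \sum_{n=1}^{\infty} 2^{-n} w^{n}(x) \phi_{n}(x). \]
Disjointness of supports makes the sum locally finite, so $w$ is continuous, non-negative, and $w(x_{n}) = 2^{-n} w^{n}(x_{n})$.

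Two verifications finish the proof. First, $|\varphi_{n}(x_{n})| w(x_{n}) \geq (n 2^{n} / w^{n}(x_{n})) \cdot 2^{-n} w^{n}(x_{n}) = n$, so $\sup_{\varphi \in B} \|\varphi\|_{w} = \infty$. Second, fix $\mu \in \R_{+}$ and estimate $w/w^{\mu}$ summand by summand. For indices $n \geq \mu$ one has $w^{n} \leq w^{\mu}$, so the $n$-th summand is bounded by $2^{-n} w^{\mu}(x)$. For the finitely many indices $n < \mu$, the quotient $w^{n}/w^{\mu}$ is a continuous function, hence bounded on the compact set $\overline{B}(x_{n}, r_{n})$; this yields a bound on the corresponding summand of the form $K_{n,\mu}\, w^{\mu}(x)$. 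Adding these estimates gives $\sup_{x \in \R^{d}} w(x)/w^{\mu}(x) < \infty$, so $w \in \overline{V}(\W)$.

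The only mildly delicate point is the sparsification used to obtain disjoint balls when $(x_{n})$ clusters at a finite limit; apart from that, the construction is entirely elementary. The essential idea is that the weight $w^{n}$ itself furnishes, locally near $x_{n}$ and after rescaling by $2^{-n}$, a legitimate element of the Nachbin family that is still big enough at $x_{n}$ to detect the failure of boundedness of $B$ with respect to $w^{n}$.
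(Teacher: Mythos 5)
Your overall strategy---the trivial forward implication plus, for the converse, extracting $\varphi_{n} \in B$ and points $x_{n}$ with $|\varphi_{n}(x_{n})| w^{n}(x_{n}) \geq n 2^{n}$ and assembling a single weight $w = \sum_{n} 2^{-n} w^{n} \phi_{n} \in \overline{V}(\W)$ that detects the unboundedness---is sound and is essentially the standard construction (the paper itself disposes of this lemma only by reference to an analogous result in the literature). Two steps, however, are defective as written. First, ``passing to a subsequence so that the $x_{n}$ are pairwise distinct'' is not always possible: nothing prevents the $x_{n}$ from taking only finitely many values (e.g.\ all equal to a single point $x_{0}$), in which case no sparsification can yield disjoint balls; you flag the clustering case as delicate but never actually supply the argument. (Also, after passing to a subsequence $n_{k}$ and relabelling, the weight attached to the $k$-th point is $w^{n_{k}}$, not $w^{k}$; this is harmless because $n_{k} \geq k$, but it must be tracked in the definition of $w$ and in the verification.) Second, the assertion ``disjointness of supports makes the sum locally finite'' is false: pairwise disjoint balls whose centres accumulate at a finite point do not form a locally finite family, so your continuity claim for $w$ is unjustified precisely in the case you call delicate.

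Both defects are easily repaired, and the cleanest repair shows that disjointness is not needed at all. Since the system is non-increasing in the parameter, $w^{n} \leq w^{1}$ for all $n \geq 1$; hence on every compact set $K$ the series $\sum_{n} 2^{-n} w^{n} \phi_{n}$ (with, say, bumps of radius $1$ and no disjointness requirement) is dominated by $\sum_{n} 2^{-n} \sup_{K} w^{1}$ and converges uniformly, so $w$ is continuous---and in any case only upper semicontinuity is required for $w \in \overline{V}(\W)$. Your verification that $\sup_{x} w(x)/w^{\mu}(x) < \infty$ goes through verbatim without disjointness (the finitely many indices $n < \mu$ are still handled by compactness of $\supp \phi_{n}$), and the lower bound only needs $w(x_{n}) \geq 2^{-n} w^{n}(x_{n})$, which holds because all summands are non-negative. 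Alternatively, the degenerate case can be dispatched separately: if infinitely many $x_{n}$ coincide with some $x_{0}$, then $|\varphi_{n}(x_{0})| \geq n 2^{n} / w^{1}(x_{0}) \to \infty$, and since every $w^{\lambda} \geq 1$, any non-negative bump equal to $1$ at $x_{0}$ already belongs to $\overline{V}(\W)$ and witnesses the unboundedness. With these adjustments your proof is complete and coincides in substance with the intended argument.
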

	
	\begin{proof}
		This is shown similarly as in \cite[Lemma 4.11]{D-V-WeightedIndUltra}.
	\end{proof}
	
They coincide topologically in the following case.

	\begin{lemma}[{\cite[Corollary 5, p.~116]{B-IntroLocallyConvexIndLim}}]
		\label{l:ProjDescrContinuousFunctions}
		If $\W$ is a weight function system satisfying
			\begin{itemize}
				\item[$(S)$] $\forall \mu \in \R_{+} ~ \exists \lambda \in \R_{+} : \: w^{\lambda} / w^{\mu} \in C_{0}(\R^{d})$, 
			\end{itemize}
		then,  $C_{\{\W\}} = C\overline{V}(\W)$ as locally convex spaces.
	\end{lemma}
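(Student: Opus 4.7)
That the two spaces coincide as sets is immediate from Lemma \ref{l:BoundednessNachbinFamily} applied to singletons, so the content of the statement is the topological identification. I would prove the two continuous inclusions separately.

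The easy direction is that the identity $C_{\{\W\}} \to C\overline{V}(\W)$ is continuous. By the universal property of the inductive limit, it suffices to check that each canonical inclusion $C_{w^{\lambda}} \hookrightarrow C\overline{V}(\W)$ is continuous. For any $w \in \overline{V}(\W)$ the constant $M_{w, \lambda} := \sup_{x \in \R^{d}} w(x)/w^{\lambda}(x)$ is finite by the very definition of $\overline{V}(\W)$, so $\|\varphi\|_{w} \leq M_{w,\lambda}\|\varphi\|_{w^{\lambda}}$ for every $\varphi \in C_{w^{\lambda}}$. This direction does not use $(S)$.

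The substantive direction is the reverse continuity: every 0-neighborhood in the inductive topology on $C_{\{\W\}}$ should contain a 0-neighborhood of the form $\{\varphi : \|\varphi\|_{w} < 1\}$ for some $w \in \overline{V}(\W)$. A convex, absolutely convex basic 0-neighborhood in $C_{\{\W\}}$ may be written as $U = \Gamma\bigl(\bigcup_{n \in \N}\varepsilon_{n} B_{n}\bigr)$, where $B_{n}$ is the closed unit ball of $C_{w^{\lambda_{n}}}$ for an unbounded sequence $\lambda_{n}$, and $\varepsilon_{n}>0$ is arbitrary. Using $(S)$ I would refine the sequence so that $w^{\lambda_{n+1}}/w^{\lambda_{n}} \in C_{0}(\R^{d})$ for each $n$, and then select compact, increasing subsets $K_{n} \subset \R^{d}$ on which $w^{\lambda_{n+1}}/w^{\lambda_{n}}$ is uniformly small. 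On the annular pieces $A_{n} = K_{n+1}\setminus K_{n}$ I would then define $w$ so that on $A_{n}$ it is comparable to $\varepsilon_{n}^{-1} w^{\lambda_{n}}$ (up to a telescoping correction that ensures upper semicontinuity across the $\partial K_{n}$). The decay furnished by $(S)$ guarantees both that $w/w^{\mu}$ remains bounded for every $\mu \in \R_{+}$ (so $w \in \overline{V}(\W)$) and that the piecewise estimate $|\varphi(x)|w(x) \leq 1$ forces $\varphi \in U$ by a direct convexity/partition argument.

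The main obstacle is the last construction: building a single $w \in \overline{V}(\W)$ whose seminorm ball sits inside the given $U$. The whole point of condition $(S)$ is to make this possible, since it is exactly the decay of $w^{\lambda_{n+1}}/w^{\lambda_{n}}$ at infinity that permits the piecewise-defined weight to glue into an admissible element of $\overline{V}(\W)$. Once this weight has been produced, the domination $\|\cdot\|_{w}\text{-ball} \subset U$ is essentially a rewriting, and the two continuous identities together yield the claimed equality as locally convex spaces.
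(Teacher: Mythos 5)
The paper itself offers no proof of this lemma: it is quoted verbatim from \cite[Corollary 5, p.~116]{B-IntroLocallyConvexIndLim}, so your plan has to be measured against the standard Bierstedt--Meise--Summers projective-description argument, which is essentially what you are reconstructing. Your easy half is fine: the set equality follows from Lemma \ref{l:BoundednessNachbinFamily} applied to singletons, and the continuity of $C_{\{\W\}}\rightarrow C\overline{V}(\W)$ needs only the definition of $\overline{V}(\W)$. Your reduction of the hard half is also the right one: pass to a cofinal sequence $\lambda_{n}\nearrow\infty$ with $w^{\lambda_{n+1}}/w^{\lambda_{n}}\in C_{0}(\R^{d})$ (legitimate under $(S)$, since enlarging $\lambda$ only decreases the quotient), fix a basic neighbourhood $U=\Gamma\bigl(\bigcup_{n}\varepsilon_{n}B_{n}\bigr)$, and build a single $w\in\overline{V}(\W)$ piecewise on annuli $K_{n+1}\setminus K_{n}$; if the smallness of $w^{\lambda_{n+1}}/w^{\lambda_{n}}$ off $K_{n}$ is chosen recursively, the telescoped product indeed gives $\sup_{x}w(x)/w^{\mu}(x)<\infty$ for every $\mu$ (cofinality reduces arbitrary $\mu$ to some $\lambda_{m}$), so membership in $\overline{V}(\W)$ is not the problem.

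The genuine gap is in the last step, which you dismiss as ``essentially a rewriting''. First, your weight must carry summable normalizing factors, say of size $2^{n}\varepsilon_{n}^{-1}w^{\lambda_{n}}$ on the $n$-th annulus rather than $\varepsilon_{n}^{-1}w^{\lambda_{n}}$: elements of $\Gamma\bigl(\bigcup_{n}\varepsilon_{n}B_{n}\bigr)$ are \emph{finite} absolutely convex combinations, so the pieces of $\varphi$ must enter with coefficients (e.g.\ $2^{-n}$) summing to at most $1$. Second, and more seriously, a partition of unity subordinate to the annuli decomposes $\varphi$ into an \emph{infinite} series, which is not by itself an element of $U$. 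To close the argument you must either replace $U$ by its closure (closures of a $0$-neighbourhood basis again form a basis) and check that the partial sums converge to $\varphi$ in some step $C_{w^{\lambda_{N}}}$, or invoke $(S)$ a second time together with the fact that $\varphi$ already lies in some step $C_{w^{\lambda_{m}}}$, so that the entire tail $\varphi\bigl(1-\sum_{n<M}\phi_{n}\bigr)$ can be absorbed into a single ball $\varepsilon_{N}B_{N}$: outside a large compact set one has $|\varphi|\,w^{\lambda_{N}}\leq\|\varphi\|_{w^{\lambda_{m}}}\,w^{\lambda_{N}}/w^{\lambda_{m}}$, which is small precisely because $w^{\lambda_{N}}/w^{\lambda_{m}}\in C_{0}(\R^{d})$. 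With these two points added your plan does reproduce the cited result; without them the ``direct convexity/partition argument'' does not yet put $\varphi$ into $U$.
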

	
Note that due to Lemma \ref{l:CondNinC0}, if $\W$ satisfies $\{\condM\}$ and $\{\condN\}$, then it also satisfies $(S)$. If $\W_{j}$ is a weight function system on $\R^{d_{j}}$, $j = 1, 2$, then $\W_{1} \otimes \W_{2}$ satisfies $(S)$ if and only if the $\W_{j}$ do so. Moreover, $\overline{V}(\W_{1}) \otimes \overline{V}(\W_{2})$ is upward dense in $\overline{V}(\W_{1} \otimes \W_{2})$, that is, for every $w \in \overline{V}(\W_{1} \otimes \W_{2})$ there exist $w_{j} \in \overline{V}(\W_{j})$, $j = 1, 2$, such that $w(x_{1}, x_{2}) \leq w_{1} \otimes w_{2}(x_{1}, x_{2})$ for all $(x_{1}, x_{2}) \in \R^{d_{1} + d_{2}}$. 
	
We conclude this section with a remark about condition $[\condSq]$.
	
	\begin{remark}
		The condition $[\condSq]$ was first introduced in \cite{D-N-WeighPLBSpUltraDiffMultSp} and is a natural property in the context of Beurling-Bj\"{o}rck spaces as seen from Lemma \ref{l:WeighFuncSystemsSingleWeight}. It also naturally arises in the context of the so-called Gelfand-Shilov spaces. Indeed, suppose $M = (M_{p})_{p \in \N}$ is a sequence of positive numbers satisfying the condition $(M.1)$ from \cite{K-Ultradistributions1} and is such that $\lim_{p \rightarrow \infty} M_{p}^{1/p} = \infty$. Then, consider its \emph{associated function}
			\[ \omega_{M}(x) = \sup_{p \in \N}\log \frac{|x|^{p} M_{0}}{M_{p}} , \qquad x \in \R^{d} \setminus \{0\} , \]
		and $\omega_{M}(0) = 1$. Now, if $A = (A_{p})_{p \in \N}$ is another sequence of positive numbers satisfying $(M.1)$ and $\lim_{p \rightarrow \infty} A_{p}^{1/p} = \infty$ with associated function $\omega_{A}$, the Gelfand-Shilov space (of Beurling and Roumieu type) $\S^{[M]}_{[A]}$ (see \cite{G-S-GenFunc2}) consists of all those $\varphi \in C^{\infty}(\R^{d})$ such that
			\[ \forall \ell > 0 ~ (\exists \ell > 0) : \quad \sup_{(\alpha, x) \in \N^{d} \times \R^{d}} \frac{|\varphi^{(\alpha)}(x)| e^{\omega_{A}(x / \ell)}}{\ell^{|\alpha|} M_{|\alpha|}} < \infty , \]
		endowed with its natural Fr\'{e}chet space topology ($(LB)$-space topology). We can associate the weight function system $\W_{M} = \{ e^{\omega_{M}(\cdot / \lambda)} : \lambda \in \R_{+} \}$ to the weight sequence $M$, and similarly we associate the weight function system $\W_{A}$ to $A$. If we now alternatively consider the Beurling-Bj\"{o}rck space $\S^{[\W_{M}]}_{[\W_{A}]}$ (see Section \ref{sec:BBSp}), then one can show (cf. \cite[Corollary 2.4]{C-C-K-CharGSSpFourierTrans}) that if $M$ and $A$ satisfy Komatsu's condition $(M.2)$, then $\S^{[M]}_{[A]}$ and $\S^{[\W_{M}]}_{[\W_{A}]}$ coincide topologically. Now $M$, respectively $A$, satisfies $(M.2)$ if and only if $\W_{M}$, respectively $\W_{A}$, satisfies $[\condSq]$ \cite[Lemma 3.6]{D-N-WeighPLBSpUltraDiffMultSp}. It is worthwhile however to point out that for the Gelfand-Shilov spaces analogous results to those in this paper were obtained in \cite{D-N-V-NuclGSSpKernThm} under the weaker condition $(M.2)'$ on $M$ and $A$, which turns out to be equivalent to $\W_{M}$ and $\W_{A}$ satisfying $[\condN]$ \cite[Lemma 3.4]{D-N-V-NuclGSSpKernThm}.
	\end{remark}

\section{The Beurling-Bj\"orck spaces}
\label{sec:BBSp}

Let $v$ and $w$ be non-negative functions on $\R^{d}$, then we denote by $\S^{v}_{w} = \S^{v}_{w}(\R^{d})$ the seminormed space of all $\varphi \in \S^{\prime}(\R^{d})$ such that both $\varphi$ and $\widehat{\varphi}$ are continuous and $\|\varphi\|_{\S^{v}_{w}} = \|\varphi\|_{w} + \|\widehat{\varphi}\|_{v} < \infty$, where $\widehat{\varphi}$ stands for the Fourier transform whose constants we fix as
	\[ \mathcal{F}(\varphi)(\xi) = \widehat{\varphi}(\xi) = \int_{\R^{d}} \varphi(t) e^{-2 \pi i \xi \cdot t} d\xi . \]
If $v$ and $w$ are positive and continuous, then $\S^{v}_{w}$ is a Banach space. Given two weight function systems $\V$ and $\W$, we define the \emph{Beurling-Bj\"{o}rck space (of Beurling and Roumieu type)}
	\[ \S^{(\V)}_{(\W)} = \varprojlim_{\lambda \rightarrow 0^{+}} \S^{v^{\lambda}}_{w^{\lambda}} , \qquad \S^{\{\V\}}_{\{\W\}} = \varinjlim_{\lambda \rightarrow \infty} \S^{v^{\lambda}}_{w^{\lambda}} . \]
Note that $\S^{(\V)}_{(\W)}$ is a Fr\'echet space, while $\S^{\{\V\}}_{\{\W\}}$ is an $(LB)$-space. We remark that the Fourier transform is an isomorphism between $\S^{[\V]}_{[\W]}$ and $\S_{[\V]}^{[\check{\W}]}$. If $\W$ satisfies $[\condM]$, then $\S^{[\V]}_{[\W]}$ is translation-invariant; a fact we shall tacitly use in the sequel. In the Roumieu case, we additionally have the following result on the topology of $\S^{\{\V\}}_{\{\W\}}$.

	\begin{proposition}
		\label{p: completeness inductive} 
		Let $\V$ and $\W$ be weight function systems. Then, $\S^{\{\V\}}_{\{\W\}}$ is a complete and thus regular $(LB)$-space.
	\end{proposition}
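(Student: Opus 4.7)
The plan is to realize $\S^{\{\V\}}_{\{\W\}}$ as a closed topological subspace of the product $C_{\{\W\}} \times C_{\{\V\}}$, whose completeness is already in hand since each factor is complete by \cite{B-B91}. The relevant map is the graph of the Fourier transform, $T\colon \varphi \mapsto (\varphi, \widehat{\varphi})$. Since $\|\varphi\|_{\S^{v^{\lambda}}_{w^{\lambda}}} = \|\varphi\|_{w^{\lambda}} + \|\widehat{\varphi}\|_{v^{\lambda}}$ is literally the norm inherited from $C_{w^{\lambda}} \times C_{v^{\lambda}}$, $T$ embeds $\S^{v^{\lambda}}_{w^{\lambda}}$ isometrically onto a closed subspace at each step, and hence induces a continuous linear injection $T\colon \S^{\{\V\}}_{\{\W\}} \to C_{\{\W\}} \times C_{\{\V\}}$.

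I would then check completeness of $\S^{\{\V\}}_{\{\W\}}$ directly. Given a Cauchy net $(\varphi_{\alpha})$ in $\S^{\{\V\}}_{\{\W\}}$, both $(\varphi_{\alpha})$ and $(\widehat{\varphi_{\alpha}})$ are Cauchy in $C_{\{\W\}}$ and $C_{\{\V\}}$ respectively, and hence converge, say to $\varphi$ and $\psi$. Since the continuous inclusions $C_{w^{\lambda}} \hookrightarrow \S^{\prime}(\R^{d})$ (forced by $w^{\lambda} \geq 1$) assemble into a continuous map $C_{\{\W\}} \to \S^{\prime}(\R^{d})$, and likewise for $\V$, the convergence also holds in $\S^{\prime}(\R^{d})$, so the continuity of the Fourier transform on $\S^{\prime}(\R^{d})$ forces $\psi = \widehat{\varphi}$. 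To place $\varphi$ in some Banach step $\S^{v^{\nu}}_{w^{\nu}}$ and upgrade the limit to convergence in the inductive limit topology, I would invoke the classical fact that every complete $(LB)$-space is boundedly retractive: applied to $C_{\{\W\}}$ and $C_{\{\V\}}$, it ensures that the bounded, convergent nets live, together with their limits, in some Banach steps $C_{w^{\lambda_{0}}}$ and $C_{v^{\mu_{0}}}$, with convergence actually taking place there. Taking $\nu = \max(\lambda_{0}, \mu_{0})$ and using monotonicity of $\W$ and $\V$ then yields convergence in $\S^{v^{\nu}}_{w^{\nu}}$, and \emph{a fortiori} in $\S^{\{\V\}}_{\{\W\}}$.

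Regularity is then automatic, since every complete $(LB)$-space is regular. The main obstacle I foresee is precisely this localization step: promoting Cauchy-ness in the weighted inductive limits of continuous functions to Cauchy-ness in a common Banach step, which is the point where the structure theory of complete $(LB)$-spaces (implicit in \cite{B-B91}) really enters. The remaining ingredients are either definitional or standard tempered-distribution continuity arguments.
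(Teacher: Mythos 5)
Your overall strategy --- realizing $\S^{\{\V\}}_{\{\W\}}$ inside $C_{\{\W\}} \times C_{\{\V\}}$ via $\varphi \mapsto (\varphi, \widehat{\varphi})$ and importing completeness of the two factors from \cite{B-B91} --- is the same as the paper's, and your identification of the limit candidate (convergence in $\S^{\prime}(\R^{d})$ forces $\psi = \widehat{\varphi}$) is correct. The gap is in the localization step: the assertion that every complete $(LB)$-space is boundedly retractive is not a classical fact, and it is in fact false. The K\"othe co-echelon spaces of order infinity, which are precisely the spaces $C_{\{\W\}}$ over a discrete index set (the spaces $\lambda^{\infty}\{A\}$ of the paper's appendix), are complete for \emph{every} decreasing weight family --- this is the discrete instance of the very Bierstedt--Bonet theorem you invoke --- yet they are boundedly (equivalently, sequentially) retractive only when the weights are regularly decreasing \cite{B-M-S-KotheSetsKotheSeqSp}. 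Since the proposition assumes nothing about $\V$ and $\W$ beyond the definition of a weight function system, no retractivity property of $C_{\{\W\}}$ or $C_{\{\V\}}$ is available, so the step in which you place the convergent net together with its limit into a single Banach step $C_{w^{\lambda_{0}}}$ breaks down. Two further wrinkles: Cauchy \emph{nets} need not be bounded, so even a bounded-retractivity statement would not apply to them without first reducing to bounded sets, and retractivity properties concern bounded sets or sequences rather than arbitrary nets.

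What is actually needed --- and what the paper's proof asserts --- is that the sets $W \cap \mathcal{F}^{-1}(V \cap \S^{\prime}(\R^{d}))$, with $W$ and $V$ running over the $0$-neighborhoods of $C_{\{\W\}}$ and $C_{\{\V\}}$, form a basis of $0$-neighborhoods in $\S^{\{\V\}}_{\{\W\}}$; equivalently, the inductive limit topology of $\S^{\{\V\}}_{\{\W\}}$ coincides with the topology induced by your graph embedding $T$. Granting this, your Cauchy-net computation finishes the proof immediately: the image of $T$ is closed in $C_{\{\W\}} \times C_{\{\V\}}$ (this is exactly your $\S^{\prime}$-argument), and a closed subspace of a complete space is complete in the induced topology --- no passage to a common Banach step is required, and regularity then follows as you say. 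So the missing ingredient in your argument is precisely this topological identification of the inductive limit topology with the induced one, which is where the real content of the proposition lies; replacing it by a retractivity property of the factors does not work at the stated level of generality.
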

	
	\begin{proof} 
		A basis of neighborhoods of the origin in  $\S^{\{\V\}}_{\{\W\}}$ is obtained by taking sets of the form $W \cap \mathcal{F}^{-1}(V \cap \S^{\prime}(\R^{d}))$ where $W$ is a neighborhood of the origin in $C_{\{\W\}}$ and $V$ in $C_{\{\V\}}$. The completeness of $\S^{\{\V\}}_{\{\W\}}$ then follows from that of $C_{\{\W\}}$ and $C_{\{\V\}}$.
	\end{proof}

The primary goal of this section will be to characterize when the Beurling-Bj\"orck spaces are nuclear. In fact, we have the following result.

	\begin{theorem}
		\label{t:NuclearityBBSp}
		Let $\V$ and $\W$ be two weight function systems satisfying $[\condM]$ and $[\condSq]$ for which $\S^{[\V]}_{[\W]} \neq \{0\}$. Then, the following are equivalent:
			\begin{itemize}
				\item[$(i)$] $\V$ and $\W$ satisfy $[\condN]$.
				\item[$(ii)$] $\S^{[\V]}_{[\W]}$ is nuclear.
			\end{itemize}
	\end{theorem}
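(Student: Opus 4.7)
My plan is to realize $\S^{[\V]}_{[\W]}$ as a topologically complemented subspace of the weighted continuous function space $C_{[\W\otimes\V]}(\R^{2d})$ on phase space via the short-time Fourier transform (STFT), thereby reducing the question to nuclearity of $C_{[\W\otimes\V]}$. Fix a window $g\in\S^{[\V]}_{[\W]}\setminus\{0\}$ (which exists by the nontriviality hypothesis) and consider $V_g\varphi(x,\xi)=\int_{\R^d}\varphi(t)\overline{g(t-x)}e^{-2\pi it\cdot\xi}\,dt$ with adjoint $V_g^*$. A weighted bound on $\varphi$, combined with $[\condM]$ on $\W$ and the fast decay of $g$, gives $|V_g\varphi(x,\xi)|\lesssim 1/w^{\lambda_1}(x)$; applying the same reasoning to $\widehat{\varphi}$ via $V_g\varphi(x,\xi)=e^{-2\pi ix\cdot\xi}V_{\widehat{g}}\widehat{\varphi}(\xi,-x)$ yields $|V_g\varphi(x,\xi)|\lesssim 1/v^{\lambda_2}(\xi)$. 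Taking geometric means and absorbing the square roots via $[\condSq]$ on $\W$ and $\V$ produces $|V_g\varphi(x,\xi)|\lesssim 1/(w^{\nu_1}(x)v^{\nu_2}(\xi))$, so $V_g:\S^{[\V]}_{[\W]}\to C_{[\W\otimes\V]}(\R^{2d})$ is continuous. A parallel computation shows $V_g^*:C_{[\W\otimes\V]}(\R^{2d})\to\S^{[\V]}_{[\W]}$ is continuous, and the reconstruction identity $V_g^*V_g=\|g\|_{L^2}^2\cdot\id$ identifies $\S^{[\V]}_{[\W]}$ as a complemented subspace.

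For $(i)\Rightarrow(ii)$: when $\V$ and $\W$ both satisfy $[\condN]$, so does $\W\otimes\V$, and Lemma \ref{l:CondNinC0} supplies, for each $\lambda$, a $\mu$ with $(w^\lambda\otimes v^\lambda)/(w^\mu\otimes v^\mu)\in L^1\cap C_0$. I would prove nuclearity of the step inclusion $C_{w^\mu\otimes v^\mu}\to C_{w^\lambda\otimes v^\lambda}$ by discretizing along a lattice $\alpha\Z^{2d}$ through a bounded partition of unity $\{\psi_k\}$: modulo an arbitrarily small error absorbable by a further shift in $\mu$ using $[\condM]$, the inclusion is represented by $f\mapsto\sum_k f(x_k)\psi_k$, which is a nuclear operator provided the weighted sum $\sum_k(w^\lambda\otimes v^\lambda)(x_k)/(w^\mu\otimes v^\mu)(x_k)$ is finite; this is a Riemann-sum consequence of $L^1$-integrability and the local regularity from $[\condM]$. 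Nuclearity of $C_{[\W\otimes\V]}$ then transfers to $\S^{[\V]}_{[\W]}$ via the complementation.

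For $(ii)\Rightarrow(i)$: I would argue by contrapositive. Pick a nonzero $\varphi_0\in\S^{[\V]}_{[\W]}$; using translation-invariance afforded by $[\condM]$, a sufficiently separated sequence $(x_k)\subset\R^d$ gives translates $\{\varphi_0(\cdot-x_k)\}_k$ whose closed linear span in $\S^{[\V]}_{[\W]}$ is topologically isomorphic to a weighted sequence space with weights proportional to $w^\lambda(x_k)$. Nuclearity of such a sequence space is equivalent to $[\condN]$ for $\W$, so failure of $[\condN]$ contradicts nuclearity of $\S^{[\V]}_{[\W]}$. The Fourier isomorphism $\S^{[\V]}_{[\W]}\cong\S^{[\check{\W}]}_{[\V]}$ yields the same conclusion for $\V$.

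The main obstacle is $(i)\Rightarrow(ii)$: establishing nuclearity of the step inclusions of $C_{[\W\otimes\V]}$ from the raw integrability $[\condN]$, without the convexity hypothesis $(\delta)$ that in earlier work made derivative characterizations or Hermite-type expansions available. Lemma \ref{l:CondNinC0} plays the pivotal role of upgrading $[\condN]$ to joint $L^1\cap C_0$ control, and $[\condM]$ and $[\condSq]$ must be used carefully when iterating the discretization across scales, in both the projective (Beurling) and inductive (Roumieu) settings.
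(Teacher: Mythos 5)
Your reduction of $(i)\Rightarrow(ii)$ to nuclearity of $C_{[\W\otimes\V]}(\R^{2d})$ contains a fatal gap: that space is \emph{not} nuclear, and your proposed proof of nuclearity of the step inclusions cannot be repaired. The inclusion $C_{w^{\mu}\otimes v^{\mu}}\rightarrow C_{w^{\lambda}\otimes v^{\lambda}}$ is not even compact when the quotient of the weights lies in $L^{1}\cap C_{0}$: membership in the unit ball of $C_{w^{\mu}\otimes v^{\mu}}$ gives a pointwise bound but no equicontinuity whatsoever. For instance, $f_{n}(x,\xi)=\sin(nx_{1})/(w^{\mu}(x)v^{\mu}(\xi))$ has $\|f_{n}\|_{w^{\mu}\otimes v^{\mu}}\leq 1$, while for $n\neq m$ one has $\|f_{n}-f_{m}\|_{w^{\lambda}\otimes v^{\lambda}}\geq c\,\sup_{|x|\leq 1}|\sin(nx_{1})-\sin(mx_{1})|$ with $c>0$ the infimum of the (positive, continuous) ratio of the weights on the unit ball, so no subsequence converges. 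The same phenomenon destroys your discretization step: the error $f-\sum_{k}f(x_{k})\psi_{k}$ measures the oscillation of $f$ between lattice points, which is not controlled by $\|f\|_{w^{\mu}\otimes v^{\mu}}$, hence is not uniformly small on the unit ball for any lattice spacing and cannot be absorbed by a change of weight via $[\condM]$. Consequently nuclearity cannot be transferred to $\S^{[\V]}_{[\W]}$ by complementation in $C_{[\W\otimes\V]}$ (a complemented subspace of a non-nuclear space may of course be nuclear, but you get no information this way). This is precisely why the paper never asserts nuclearity of the weighted $C$-spaces. Its actual route is different: the STFT mappings you describe (they are Lemmas \ref{l:STFTSpecific} and \ref{l:AdjointSTFTSpecific}, combined in Proposition \ref{p:STFTBBSp}) are used only to prove that $\S^{[\V]}_{[\W]}=\S^{[\V]}_{[\W],1}$ topologically (Proposition \ref{p:BBSpInftyandL1}), i.e.\ that the topology is generated by weighted $L^{1}$-norms; nuclearity is then obtained directly from the Grothendieck--Pietsch summability criterion (Proposition \ref{p:NuclEquivSequences}): weak summability yields a uniform bound on the sup-norm partial sums, hence $\sum_{n}|\varphi_{n}(x)|w^{\lambda}(x)\leq C$ pointwise, and $[\condN]$ converts this into absolute summability for the $L^{1}$-seminorms.

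Your sketch of $(ii)\Rightarrow(i)$ is in the spirit of the paper's appendix, but the assertion that the closed span of sufficiently separated translates of $\varphi_{0}$ is topologically isomorphic to a weighted sequence space is exactly the nontrivial point and is not justified by separation alone: one needs a continuous coefficient-recovery map. The paper builds it explicitly, taking $T(c)=\sum_{j\in\Z^{d}}c_{j}\varphi_{0}(\cdot-j)$ and $S(\varphi)=\bigl(\int_{[0,1/2]^{d}}\varphi(x+j)\,dx\bigr)_{j}$, and constructing a special $\varphi_{0}$ (multiplication by a Dirichlet-type kernel followed by differentiation, starting from a nonzero element supplied by Lemma \ref{l:BBSpNonTrivialEquiv}) so that $S\circ T$ is the canonical inclusion $\lambda^{1}[A_{\W}]\rightarrow\lambda^{\infty}[A_{\W}]$; Petzsche's trick (Lemma \ref{l:PetzscheTrick}, using nuclearity of the strong dual in the Roumieu case) then gives $[\condN]$ for $\W$, and the Fourier transform handles $\V$, as you say. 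This direction of your proposal is repairable along those lines; the irreparable defect is the claimed nuclearity of $C_{[\W\otimes\V]}$ in the forward direction.
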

	
We only show the implication $(i) \Rightarrow (ii)$ in this section, the proof of $(ii) \Rightarrow (i)$ will be postponed until Appendix \ref{appendix:ProofNecessityNuclearity}. We start by introducing an integrable variant of the Beurling-Bj\"{o}rck spaces. Let $w$ be a non-negative measurable function on $\R^{d}$, then we consider the seminorm
	\[ \|\varphi\|_{w, 1} = \int_{\R^{d}} |\varphi(x)| w(x) dx , \]
and for another non-negative measurable function $v$ on $\R^{d}$ we define $\S^{v}_{w, 1} = \S^{v}_{w, 1}(\R^{d})$ as the seminormed space of all those $\varphi \in \S^{\prime}(\R^{d})$ such that $\|\varphi\|_{\S^{v}_{w, 1}} = \|\varphi\|_{w, 1} + \|\widehat{\varphi}\|_{v, 1} < \infty$. For two weight function systems $\V$ and $\W$ on $\R^{d}$ we then define the spaces
	\[ \S^{(\V)}_{(\W), 1} = \varprojlim_{\lambda \rightarrow 0^{+}} \S^{v^{\lambda}}_{w^{\lambda}, 1} , \qquad \S^{\{\V\}}_{\{\W\}, 1} = \varinjlim_{\lambda \rightarrow \infty} \S^{v^{\lambda}}_{w^{\lambda}, 1} . \]
Note that if $v$ is positively bounded from below, then the functions in $\S^{v}_{w, 1}$ are continuous. In particular, the elements of $\S^{[\V]}_{[\W], 1}$ are all continuous functions. Our first goal is to show that if the weight function systems satisfy the conditions $[\condM]$, $[\condSq]$, and $[\condN]$, then $\S^{[\V]}_{[\W]} = \S^{[\V]}_{[\W], 1}$ as locally convex spaces. One inclusion is obvious, and its proof is left to the reader.

	\begin{lemma}
		\label{l:BBSpInftyInBBSpL1}
		Suppose $\V$ and $\W$ are weight function systems satisfying $[\condN]$. Then, $\S^{[\V]}_{[\W]} \subseteq \S^{[\V]}_{[\W], 1}$ continuously. 
	\end{lemma}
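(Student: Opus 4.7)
The plan is to reduce the claim to a direct pointwise-to-integral estimate using the hypothesis $[\condN]$ applied separately to $\W$ and to $\V$. The key observation is that
\[
\|\varphi\|_{w^{\lambda}, 1} = \int_{\R^{d}} |\varphi(x)|\, w^{\lambda}(x)\, dx \leq \|\varphi\|_{w^{\mu}} \int_{\R^{d}} \frac{w^{\lambda}(x)}{w^{\mu}(x)}\, dx,
\]
whenever $w^{\lambda}/w^{\mu} \in L^{1}(\R^{d})$; an analogous bound holds for $\widehat{\varphi}$ in terms of the weights $v^{\lambda}, v^{\mu}$. So everything boils down to choosing parameters correctly in each of the Beurling and Roumieu cases.

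In the Beurling case, I would fix $\lambda \in \R_{+}$ and apply $(\condN)$ to $\W$ and to $\V$ to obtain $\mu_{1},\mu_{2}\in\R_{+}$ with $w^{\lambda}/w^{\mu_{1}}\in L^{1}$ and $v^{\lambda}/v^{\mu_{2}}\in L^{1}$. Setting $\mu:=\min(\mu_{1},\mu_{2})$ and using the monotonicity $w^{\mu_{1}}\leq w^{\mu}$, $v^{\mu_{2}}\leq v^{\mu}$, the ratios $w^{\lambda}/w^{\mu}$ and $v^{\lambda}/v^{\mu}$ remain in $L^{1}$. The displayed estimate and its Fourier analogue then yield a constant $C=C_{\lambda}>0$ with
\[
\|\varphi\|_{\S^{v^{\lambda}}_{w^{\lambda},1}} \leq C \|\varphi\|_{\S^{v^{\mu}}_{w^{\mu}}}, \qquad \varphi\in\S^{v^{\mu}}_{w^{\mu}},
\]
which is exactly what one needs for the continuous inclusion of the two Fr\'echet projective limits.

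In the Roumieu case, given $\mu\in\R_{+}$, I would apply $\{\condN\}$ to $\W$ and to $\V$ to produce $\lambda_{1},\lambda_{2}\in\R_{+}$ with $w^{\lambda_{1}}/w^{\mu}\in L^{1}$ and $v^{\lambda_{2}}/v^{\mu}\in L^{1}$, then take $\lambda:=\max(\lambda_{1},\lambda_{2})$ and again exploit monotonicity of $\W$ and $\V$ to conclude that $w^{\lambda}/w^{\mu}$ and $v^{\lambda}/v^{\mu}$ lie in $L^{1}$. The same one-line estimate gives a continuous inclusion $\S^{v^{\mu}}_{w^{\mu}}\hookrightarrow \S^{v^{\lambda}}_{w^{\lambda},1}$, which by the universal property of inductive limits passes to a continuous map $\S^{\{\V\}}_{\{\W\}}\to \S^{\{\V\}}_{\{\W\},1}$.

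There is no real obstacle here: the only subtlety is keeping track of which direction of monotonicity is needed (smaller parameter yields pointwise larger weight) so that the two applications of $[\condN]$ can be combined into a single parameter $\mu$ or $\lambda$. Neither $[\condM]$ nor $[\condSq]$ enters the argument, consistent with the fact that this is the easy half of the identification $\S^{[\V]}_{[\W]}=\S^{[\V]}_{[\W],1}$.
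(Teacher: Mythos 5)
Your argument is correct and is precisely the straightforward estimate the paper has in mind when it leaves this inclusion "to the reader": in each case you apply $[\condN]$ to $\W$ and $\V$, combine the two parameters using the monotonicity of the weight systems, and bound the $L^{1}$-type seminorms by the sup-type seminorms via $\|\varphi\|_{w^{\lambda},1}\leq \|\varphi\|_{w^{\mu}}\,\|w^{\lambda}/w^{\mu}\|_{L^{1}}$ and its Fourier analogue. The parameter bookkeeping in both the Beurling and Roumieu cases, including the direction of monotonicity and the passage to the inductive limit, is handled correctly.
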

	
We now study when $\S^{[\V]}_{[\W], 1} \subseteq \S^{[\V]}_{[\W]}$. We first observe that the non-triviality of $\S^{[\V]}_{[\W], 1}$ implies that of $\S^{[\V]}_{[\W]}$.

	\begin{lemma}
		\label{l:BBSpNonTrivialEquiv}
		Let $\V$ and $\W$ be weight function systems satisfying $[\condM]$. If $\S^{[\V]}_{[\W]} \neq \{0\}$ or $\S^{[\V]}_{[\W], 1} \neq \{0\}$, then, $\S^{[\V]}_{[\W]} \cap \S^{[\V]}_{[\W], 1} \neq \{0\}$.
	\end{lemma}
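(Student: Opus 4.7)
The proof naturally splits into two cases according to which of the two spaces contains the given nonzero element $\varphi$. In both cases, the plan is to construct a nonzero element of the intersection via the autocorrelation $\psi = \varphi * \widetilde{\varphi}$, where $\widetilde{\varphi}(x) = \overline{\varphi(-x)}$, so that $\widehat{\psi} = |\widehat{\varphi}|^{2} \geq 0$ and $\psi(0) = \|\varphi\|_{L^{2}}^{2}$; the verification that $\psi$ lies in both spaces is then carried out using the submultiplicative estimate $w^{\lambda}(x) \leq Cw^{\mu}(x-y)w^{\nu}(y)$ afforded by $[\condM]$ (and its analogue for $\V$).

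Suppose first $\varphi \in \S^{[\V]}_{[\W],1} \setminus \{0\}$. Since $v^{\lambda}, w^{\lambda} \geq 1$, one has $\varphi, \widehat{\varphi} \in L^{1}$, and then Fourier inversion gives $\varphi, \widehat{\varphi} \in L^{1} \cap L^{\infty} \subseteq L^{2}$. Consequently $\psi$ is well-defined, bounded and continuous, with $\psi(0) = \|\varphi\|_{L^{2}}^{2} > 0$. The weighted $L^{1}$-bound on $\psi$ follows from the weighted Young estimate applied with $[\condM]$, $\|\psi\|_{w^{\lambda},1} \leq C \|\varphi\|_{w^{\mu},1}\|\widetilde{\varphi}\|_{w^{\nu},1}$, where one uses $[\condM]$ again to control $\widetilde{\varphi}$ via the reflected system $\check{\W}$. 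Symmetrically, one bounds $\widehat{\psi} = |\widehat{\varphi}|^{2}$ by writing $|\widehat{\varphi}|^{2} \leq \|\widehat{\varphi}\|_{L^{\infty}} \cdot |\widehat{\varphi}|$, which transfers the weighted $L^{1}$-bound on $\widehat{\varphi}$ to one on $\widehat{\psi}$. The corresponding weighted $L^{\infty}$-bounds on $\psi$ and $\widehat{\psi}$ are then extracted by trading one factor of $\varphi$ (or $\widehat{\varphi}$) against its unweighted $L^{\infty}$-bound, while the other factor carries the weight.

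For the case $\varphi \in \S^{[\V]}_{[\W]} \setminus \{0\}$, I would invoke the Fourier isomorphism $\mathcal{F}\colon \S^{[\V]}_{[\W]} \to \S^{[\check{\W}]}_{[\V]}$, noting that the corresponding integrable variant transforms analogously, and then apply the construction of the previous case to the Fourier side. Taking an inverse Fourier transform returns an element of $\S^{[\V]}_{[\W]} \cap \S^{[\V]}_{[\W],1}$.

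The main obstacle, in both cases, is the weighted $L^{\infty}$-bound on $\psi$ (respectively $\widehat{\psi}$) starting only from weighted $L^{1}$-bounds on $\varphi$ (respectively $\widehat{\varphi}$); a naive Young-type estimate would require weighted $L^{\infty}$-bounds that are not available without $[\condN]$ or $[\condSq]$. The resolution is expected to exploit the positive-definiteness $\widehat{\psi} = |\widehat{\varphi}|^{2}$, combined with an adroit use of $[\condM]$ to handle the reflected weight system $\check{\W}$ and to pass weight bounds through the Fourier transform; the same symmetric structure is what allows the reduction of the second case to the first after Fourier transform.
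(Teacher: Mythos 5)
Your construction breaks down at points that are essential rather than technical. First, the reflection problem: $\widetilde{\varphi}(x)=\overline{\varphi(-x)}$ has its decay measured by the reflected system $\check{\W}$, since $\|\widetilde{\varphi}\|_{w^{\nu},1}=\|\varphi\|_{\check{w}^{\nu},1}$, and $[\condM]$ gives no domination of $\check{w}^{\nu}$ by the weights of $\W$ (setting $y=-x$ only yields the lower bound $w^{\nu}(-x)\geq w^{\lambda}(0)/(Cw^{\mu}(x))$). For example, $w^{\lambda}(x)=e^{\max(x,0)/\lambda}$ on $\R$ (i.e. $\W_{\omega}$ with $\omega(x)=\max(x,0)$, which satisfies $(\alpha)$) is a weight function system satisfying $[\condM]$ for which no $\check{w}^{\lambda}$ is bounded by any $Cw^{\mu}$; an admissible $\varphi$ may then decay exponentially at $+\infty$ but only polynomially at $-\infty$, in which case $\varphi\ast\widetilde{\varphi}$ decays only polynomially at $+\infty$ and fails the required bound against $w^{\lambda}$. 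So the weighted Young estimate you invoke is simply not available from the hypotheses. Second, the step you yourself call ``the main obstacle'' is an unfilled gap, not a detail: starting from $\varphi\in\S^{[\V]}_{[\W],1}$ you only have weighted $L^{1}$ bounds, and a convolution of two weighted-$L^{1}$ factors yields weighted $L^{1}$ control but no weighted sup control, because in $w^{\lambda}(x)\leq Cw^{\mu}(x-y)w^{\nu}(y)$ \emph{both} factors must carry part of the weight, so neither can be ``traded against its unweighted $L^{\infty}$ bound''; likewise $\widehat{\psi}=|\widehat{\varphi}|^{2}$ inherits no weighted sup bound from a weighted $L^{1}$ bound on $\widehat{\varphi}$, and positive-definiteness only gives the unweighted estimate $|\psi(x)|\leq\psi(0)$. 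With only $[\condM]$ in force this cannot be patched by soft arguments: upgrading $L^{1}$-type membership to sup-type membership is precisely the content of Proposition \ref{p:BBSpInftyandL1}, which requires $[\condSq]$, $[\condN]$ and the STFT machinery. Finally, the reduction of your second case fails as stated: the Fourier transform maps $\S^{[\V]}_{[\W]}$ onto $\S^{[\check{\W}]}_{[\V]}$, again a sup-type space, so ``the construction of the previous case'' (which used $\varphi,\widehat{\varphi}\in L^{1}$) does not apply; indeed a sup-type element need not lie in $L^{2}$ at all, so $\varphi\ast\widetilde{\varphi}$ may not even be defined.

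The paper sidesteps all of this by regularizing with fixed compactly supported functions rather than with $\varphi$ itself: choose $\psi,\chi\in\D(\R^{d})$ with $(\varphi\ast\psi)(0)=\langle\varphi,\check{\psi}\rangle=1$ and $\widehat{\chi}(0)=1$, and set $\varphi_{0}=(\varphi\ast\psi)\cdot\widehat{\chi}$, so that $\widehat{\varphi_{0}}=(\widehat{\varphi}\,\widehat{\psi})\ast\check{\chi}$. Since $\psi$ and $\check{\chi}$ have compact support they belong to every weighted space (both in sup and in $L^{1}$ norms), and then $[\condM]$ alone converts either a weighted sup bound or a weighted $L^{1}$ bound on $\varphi$ (resp. on $\widehat{\varphi}$) into \emph{both} kinds of bounds for $\varphi_{0}$ (resp. $\widehat{\varphi_{0}}$), while $\varphi_{0}(0)=1$ gives non-triviality. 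If you wish to retain the flavour of your approach, replacing the factor $\widetilde{\varphi}$ by a fixed test function and adding the multiplication by $\widehat{\chi}$ leads exactly to this argument.
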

	
	\begin{proof}
		Let $\varphi$ be a non-zero element of the corresponding non-trivial space. Pick $\psi, \chi \in \D(\R^{d})$ for which $(\varphi * \psi)(0)=\langle \varphi, \check{\psi} \rangle= 1$ and $\widehat{\chi}(0) = 1$. Then one readily verifies that $\varphi_{0} = (\varphi * \psi) \cdot \widehat{\chi}$ is an element of both $\S^{[\V]}_{[\W]}$ and  $\S^{[\V]}_{[\W], 1}$ that is non-trivial (as $\varphi_{0}(0) = 1$).
	\end{proof}

 Our proof of $\S^{[\V]}_{[\W], 1} \subseteq \S^{[\V]}_{[\W]}$ is based on the mapping properties of the \emph{short-time Fourier transform} (STFT). The STFT of a function $f \in L^{2}(\R^{d})$ with respect to a window $\psi \in L^{2}(\R^{d})$ is given by
	\[ V_{\psi} f(x, \xi) = \int_{\R^{d}} f(t) \overline{\psi(t - x)} e^{-2\pi i \xi \cdot t} dt , \qquad (x, \xi) \in \R^{2d} . \]
Then $V_{\psi}$ is a continuous linear mapping $L^{2}(\R^{d}) \rightarrow L^{2}(\R^{2d})$. The \emph{adjoint STFT} of a function $F \in L^{2}(\R^{2d})$ is given by the weak integral
	\[ V^{*}_{\psi} F(t) = \iint_{\R^{2d}} F(x, \xi) e^{2 \pi i \xi \cdot t} \psi(t - x) dx d\xi . \]
A function $\gamma \in L^{2}(\R^{d})$ is called a \emph{synthesis window} for $\psi$ if $(\gamma, \psi)_{L^{2}} \neq 0$, and in this case we have
	\begin{equation}
		\label{eq:STFTReconstruct}
		\frac{1}{(\gamma, \psi)_{L^{2}}} V^{*}_{\gamma} \circ V_{\psi} = \id_{L^{2}(\R^{d})} . 
	\end{equation}
We now consider the (adjoint) STFT in the context of our general Beurling-Bj\"{o}rck spaces.
	
	\begin{lemma}
		\label{l:STFTSpecific}
		Let $v_{j}$ and $w_{j}$ be non-negative measurable functions on $\R^{d}$ for $j \in \{0,1,2,3\}$. Suppose that for certain $C_{0}, C_{1} > 0$ we have
			\begin{equation}
				\label{eq:STFTSpecificCondSq}
				v_{0}(x)^{2} \leq C_{0} v_{1}(x) \quad \text{and} \quad w_{0}(x)^{2} \leq C_{0} w_{1}(x) , \qquad \forall x \in \R^{d} ,
			\end{equation}
		and
			\begin{equation}
				\label{eq:STFTSpecificCondM}
				v_{1}(x + y) \leq C_{1} v_{2}(x) v_{3}(y) \quad \text{and} \quad w_{1}(x + y) \leq C_{1} w_{2}(x) w_{3}(y) , \qquad \forall x, y \in \R^{d} .
			\end{equation}
		Then, for any $\psi \in \S^{v_{3}}_{w_{3}}$, the linear mapping
			\[ V_{\check{\psi}} : \S^{v_{2}}_{w_{2}, 1} \rightarrow C_{w_{0} \otimes v_{0}}(\R^{2d}) \]
		is well-defined and continuous.
	\end{lemma}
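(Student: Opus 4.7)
\emph{Proof plan.} The main obstacle is that the weights $w_0, v_0$ enter the hypothesis \eqref{eq:STFTSpecificCondSq} only \emph{quadratically}, so distributing a single power of $w_0(x)$ through the defining integral of $V_{\check\psi}\varphi$ and splitting via \eqref{eq:STFTSpecificCondM} would produce half-power factors $w_2(t)^{1/2}w_3(x-t)^{1/2}$, which do not match the $L^1$-type seminorm $\|\cdot\|_{w_2,1}$ controlling $\varphi$. My plan is to bypass this mismatch by estimating $|V_{\check\psi}\varphi(x,\xi)|^2$ using two dual pointwise representations of the STFT and splitting the squared weight $(w_0\otimes v_0)^2$ accordingly.

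First I would record the two bounds
\[ |V_{\check\psi}\varphi(x,\xi)| \leq \int_{\R^d} |\varphi(t)||\psi(x-t)|\,dt \]
(straight from the definition, using $\overline{\check\psi(t-x)} = \overline{\psi(x-t)}$) and
\[ |V_{\check\psi}\varphi(x,\xi)| \leq \int_{\R^d} |\widehat\varphi(\eta)||\widehat\psi(\xi-\eta)|\,d\eta. \]
The second follows from writing $V_{\check\psi}\varphi(x,\xi) = \mathcal{F}[\varphi \cdot T_x\overline{\check\psi}](\xi)$ and applying the convolution theorem, which after computing $\mathcal{F}[T_x\overline{\check\psi}](\eta) = e^{-2\pi i x\cdot\eta}\overline{\widehat\psi(\eta)}$ gives the pointwise formula $V_{\check\psi}\varphi(x,\xi) = \int \widehat\varphi(\eta)\overline{\widehat\psi(\xi-\eta)}e^{-2\pi i x\cdot(\xi-\eta)}\,d\eta$. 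Under the hypotheses both integrals converge absolutely once the weights are inserted, so the identity is justified.

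The key step is to multiply the two bounds and then attach the squared weights $w_0(x)^2 v_0(\xi)^2$. By \eqref{eq:STFTSpecificCondSq} this prefactor is dominated by $C_0^2 w_1(x) v_1(\xi)$, and by \eqref{eq:STFTSpecificCondM} applied to $x = t+(x-t)$ and $\xi = \eta+(\xi-\eta)$ the weights $w_1(x), v_1(\xi)$ distribute inside the integrals as $w_1(x) \leq C_1 w_2(t) w_3(x-t)$ and $v_1(\xi) \leq C_1 v_2(\eta) v_3(\xi-\eta)$. The factors $w_3(x-t)|\psi(x-t)|$ and $v_3(\xi-\eta)|\widehat\psi(\xi-\eta)|$ are absorbed into $\|\psi\|_{w_3}$ and $\|\widehat\psi\|_{v_3}$, leaving $\|\varphi\|_{w_2,1}\|\widehat\varphi\|_{v_2,1}$. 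Taking square roots and applying $\sqrt{ab}\leq (a+b)/2$ twice yields
\[ |V_{\check\psi}\varphi(x,\xi)|\,w_0(x)v_0(\xi) \leq \tfrac{C_0 C_1}{4}\|\psi\|_{\S^{v_3}_{w_3}}\|\varphi\|_{\S^{v_2}_{w_2,1}}, \]
which simultaneously gives membership of $V_{\check\psi}\varphi$ in $C_{w_0\otimes v_0}(\R^{2d})$ and continuity of the linear map $V_{\check\psi}$. Continuity of $V_{\check\psi}\varphi$ as a function on $\R^{2d}$ is a routine dominated convergence argument, with the integrand from the first representation furnishing the dominating function. The main conceptual hurdle is spotting that one must double up the STFT bound (physical and frequency side) in order to accommodate the quadratic squeeze condition \eqref{eq:STFTSpecificCondSq}; once this idea is in place, the remainder is mechanical bookkeeping with the two weight-system hypotheses.
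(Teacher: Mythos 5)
Your proposal is correct and is essentially the paper's own argument: both rest on the same two estimates, namely the time-side bound $|V_{\check\psi}\varphi(x,\xi)|\,w_1(x)\leq C_1\|\psi\|_{w_3}\|\varphi\|_{w_2,1}$ and the frequency-side bound $|V_{\check\psi}\varphi(x,\xi)|\,v_1(\xi)\leq C_1\|\widehat\psi\|_{v_3}\|\widehat\varphi\|_{v_2,1}$ obtained from the dual integral representation of the STFT, which is exactly how the paper absorbs the quadratic condition \eqref{eq:STFTSpecificCondSq}. The only (cosmetic) difference is the final combination: you multiply the two bounds and take square roots with $\sqrt{ab}\leq(a+b)/2$, whereas the paper uses $w_0(x)v_0(\xi)\leq(\max\{w_0(x),v_0(\xi)\})^2\leq C_0(w_1(x)+v_1(\xi))$ and adds the two estimates.
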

	
	\begin{proof}
		Take any $\varphi \in \S^{v_{2}}_{w_{2}, 1}$. On the one hand, we have
			\begin{align*}
				\sup_{(x, \xi) \in \R^{2d}} |V_{\check{\psi}} \varphi(x, \xi)| w_{1}(x) 
					&
					\leq C_{1} \int_{\R^{d}} |\varphi(t)| w_{2}(t) |\psi(x - t)| w_{3}(x - t) dt 
					\\
					&
					\leq C_{1} \|\psi\|_{w_{3}} \|\varphi\|_{w_{2}, 1} ,
			\end{align*}
		while on the other hand,
			\begin{multline*}
				\sup_{(x, \xi) \in \R^{2d}} |V_{\check{\psi}} \varphi(x, \xi)| v_{1}(\xi)
					= \sup_{(x, \xi) \in \R^{2d}} |V_{\mathcal{F}(\check{\varphi})} \widehat{\psi}(\xi, -x)| v_{1}(\xi) \\
					\leq C_{1} \int_{\R^{d}} |\widehat{\varphi}(\xi - t)| v_{2}(\xi - t) |\widehat{\psi}(t)| v_{3}(t) dt
					\leq C_{1} \|\widehat{\psi}\|_{v_{3}} \|\widehat{\varphi}\|_{v_{2}, 1} .
			\end{multline*}
		Setting $C = C_{0} C_{1} \|\psi\|_{\S^{v_{3}}_{w_{3}}}$, we get
			\begin{align*} 
				\sup_{(x, \xi) \in \R^{2d}} |V_{\check{\psi}} \varphi(x, \xi)| w_{0}(x) v_{0}(\xi) 
					&\leq \sup_{(x, \xi) \in \R^{2d}} |V_{\check{\psi}} \varphi(x, \xi)| (\max\{w_{0}(x), v_{0}(\xi)\})^{2} \\
					&\leq C_{0} \sup_{(x, \xi) \in \R^{2d}} |V_{\check{\psi}} \varphi(x, \xi)| (w_{1}(x) + v_{1}(\xi)) \\
					&\leq C \|\varphi\|_{\S^{v_{2}}_{w_{2}, 1}} , 
				\end{align*}
		which shows the result.
	\end{proof}
	
	\begin{lemma}
		\label{l:AdjointSTFTSpecific}
		Let $v_{j}$ and $w_{j}$ be non-negative measurable functions on $\R^{d}$ for $j \in \{0,1,2,3\}$. Suppose that \eqref{eq:STFTSpecificCondM} holds, that $\inf_{x \in \R^{d}} v_{2}(x) > 0$ and $\inf_{x \in \R^{d}} w_{2}(x) > 0$, and
			\begin{equation}
				\label{eq:AdjointSTFTSpecificCondN}
				v_{2} / v_{0} \in L^{1}(\R^{d}) \quad \text{and} \quad w_{2} / w_{0} \in L^{1}(\R^{d}) .
			\end{equation}
		Then, for any $\psi \in \S^{v_{3}}_{w_{3}}$, the linear mapping
			\[ V^{*}_{\psi} : C_{w_{0} \otimes v_{0}}(\R^{2d}) \rightarrow \S^{v_{1}}_{w_{1}} \]
		is well-defined and continuous.
	\end{lemma}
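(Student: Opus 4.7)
The plan is to mirror the estimates of Lemma \ref{l:STFTSpecific}, exploiting the symmetry between the integrability hypothesis \eqref{eq:AdjointSTFTSpecificCondN} here and the submultiplicativity hypothesis \eqref{eq:STFTSpecificCondSq} there. First I would record the elementary observation that \eqref{eq:AdjointSTFTSpecificCondN} together with the hypotheses $\inf v_{2} > 0$ and $\inf w_{2} > 0$ imply
\[ 1/v_{0} = (v_{2}/v_{0}) \cdot (1/v_{2}) \in L^{1}(\R^{d}), \qquad 1/w_{0} \in L^{1}(\R^{d}). \]
Then, for $F \in C_{w_{0} \otimes v_{0}}(\R^{2d})$, the pointwise bound $|F(x, \xi)| \leq \|F\|_{w_{0} \otimes v_{0}} / (w_{0}(x) v_{0}(\xi))$ together with $\psi \in \S^{v_{3}}_{w_{3}} \subseteq C_{w_{3}}$ shows that the integral defining $V^{*}_{\psi} F(t)$ converges absolutely, and produces a continuous function of $t$.

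Next I estimate $|V^{*}_{\psi} F(t)| w_{1}(t)$. Starting from
\[ |V^{*}_{\psi} F(t)| \leq \|F\|_{w_{0} \otimes v_{0}} \left( \int_{\R^{d}} \frac{|\psi(t - x)|}{w_{0}(x)} dx \right) \left( \int_{\R^{d}} \frac{d\xi}{v_{0}(\xi)} \right), \]
I would multiply by $w_{1}(t)$, write $w_{1}(t) \leq C_{1} w_{2}(x) w_{3}(t - x)$ inside the first integral, and split off the factor $w_{3}(t - x) |\psi(t - x)| \leq \|\psi\|_{w_{3}}$, leaving $\int w_{2}(x) / w_{0}(x) dx < \infty$. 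This yields
\[ \|V^{*}_{\psi} F\|_{w_{1}} \leq C_{1} \|\psi\|_{w_{3}} \, \|w_{2}/w_{0}\|_{L^{1}} \, \|1/v_{0}\|_{L^{1}} \, \|F\|_{w_{0} \otimes v_{0}}. \]

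For the Fourier side I would first note that $V^{*}_{\psi} F$ defines a tempered distribution (since it belongs to $C_{w_{1}}$ with $w_{1} \geq 1$), so its Fourier transform exists in $\S'(\R^{d})$. To obtain a pointwise formula, Fubini's theorem, justified by the absolute integrability established above, yields
\[ \widehat{V^{*}_{\psi} F}(\eta) = \iint_{\R^{2d}} F(x, \xi) e^{-2\pi i (\eta - \xi) \cdot x} \widehat{\psi}(\eta - \xi) \, dx \, d\xi, \]
a continuous function of $\eta$. A completely analogous estimate, now using $v_{1}(\eta) \leq C_{1} v_{2}(\xi) v_{3}(\eta - \xi)$ and $\widehat{\psi} \in C_{v_{3}}$, gives
\[ \|\widehat{V^{*}_{\psi} F}\|_{v_{1}} \leq C_{1} \|\widehat{\psi}\|_{v_{3}} \, \|v_{2}/v_{0}\|_{L^{1}} \, \|1/w_{0}\|_{L^{1}} \, \|F\|_{w_{0} \otimes v_{0}}. \]
Summing the two bounds shows that $V^{*}_{\psi} F \in \S^{v_{1}}_{w_{1}}$ with $\|V^{*}_{\psi} F\|_{\S^{v_{1}}_{w_{1}}} \lesssim \|F\|_{w_{0} \otimes v_{0}}$, which establishes both well-definedness and continuity.

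The only mildly delicate point is the justification of Fubini's theorem for the Fourier transform computation; this is a non-issue once one observes that the triple integral $\iiint |F(x,\xi)| |\psi(t-x)| \, dt \, dx \, d\xi$ is controlled (after localising in $t$) by the product of $\|1/v_{0}\|_{L^{1}}$, $\|1/w_{0}\|_{L^{1}}$, and $\|\psi\|_{L^{1}}$ (the latter being finite in view of $\psi \in \S^{v_{3}}_{w_{3}}$ and $1/w_{3} \in L^{1}$ obtained in the same way as the observation above). No step in the argument is truly an obstacle; the proof is entirely parallel to that of Lemma \ref{l:STFTSpecific} with the roles of the submultiplicativity and integrability hypotheses interchanged.
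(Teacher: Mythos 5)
Your proof is correct and takes essentially the same route as the paper: the same two sup-estimates (insert $w_{1}(t) \leq C_{1} w_{2}(x) w_{3}(t-x)$, respectively $v_{1}(\eta) \leq C_{1} v_{2}(\xi) v_{3}(\eta-\xi)$, absorb $\|\psi\|_{w_{3}}$, respectively $\|\widehat{\psi}\|_{v_{3}}$, and use the integrability of $w_{2}/w_{0}$, $v_{2}/v_{0}$ together with $1/v_{0}, 1/w_{0} \in L^{1}$ coming from $\inf v_{2}, \inf w_{2} > 0$), and the same identity $\mathcal{F}(V^{*}_{\psi}F)(\eta) = \iint F(x,\xi)\,\widehat{\psi}(\eta-\xi)e^{-2\pi i (\eta-\xi)\cdot x}\,dx\,d\xi$. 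One caveat: your parenthetical claim that $1/w_{3} \in L^{1}$ (hence $\psi \in L^{1}$) follows ``in the same way'' is not implied by the stated hypotheses, since nothing relates $w_{3}$ to $w_{0}$ or $w_{2}$; this only affects your optional justification of the Fubini/Fourier interchange, a point the paper's proof likewise passes over, and it causes no problem in the intended applications where the weights belong to a weight function system.
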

	
	\begin{proof}
		Take any $\Phi \in C_{w_{0} \otimes v_{0}}(\R^{2d})$. First, set $\varepsilon_{1} = \inf_{x \in \R^{d}} v_{2}(x)$ and note that $\varepsilon_{1} / v_{0} \leq v_{2} / v_{0} \in L^{1}(\R^{d})$, so that
			\begin{align*}
				\sup_{t \in \R^{d}} |V^{*}_{\psi}\Phi(t)| w_{1}(t) 
					&\leq C_{1} \sup_{t \in \R^{d}}  \iint_{\R^{2d}} |\Phi(x, \xi)| w_{2}(x) |\psi(t - x)| w_{3}(t - x) dxd\xi \\
					&\leq \varepsilon_{1}^{-1} C_{1} \|v_{2} / v_{0}\|_{L^{1}} \|w_{2} / w_{0}\|_{L^{1}} \|\psi\|_{w_{3}} \|\Phi\|_{w_{0} \otimes v_{0}} .
			\end{align*}
		We now set $\varepsilon_{2} = \inf_{x \in \R^{d}} w_{2}(x)$ and note that $\varepsilon_{2} / w_{0} \leq w_{2} / w_{0} \in L^{1}(\R^{d})$, hence
			\begin{align*}
				\sup_{t \in \R^{d}} |\mathcal{F}(V^{*}_{\psi}\Phi)(t)| v_{1}(t)
					&= \sup_{t \in \R^{d}} \left|\iint_{\R^{2d}} \Phi(x, \xi) \widehat{\psi}(t - \xi) e^{-2\pi i t \cdot x} e^{2 \pi i \xi \cdot x} dxd\xi\right| v_{1}(t) \\
					&\leq C_{1} \sup_{t \in \R^{d}} \iint_{\R^{2d}} |\Phi(x, \xi)| v_{2}(\xi) |\widehat{\psi}(t - \xi)| v_{3}(t - \xi) dxd\xi \\
					&\leq \varepsilon_{2}^{-1} C_{1} \|v_{2} / v_{0}\|_{L^{1}} \|w_{2} / w_{0}\|_{L^{1}} \|\widehat{\psi}\|_{v_{3}} \|\Phi\|_{w_{0} \otimes v_{0}} .
			\end{align*}
		Putting $C = (\varepsilon_{1}^{-1} + \varepsilon_{2}^{-1}) C_{1} \|v_{2} / v_{0}\|_{L^{1}} \|w_{2} / w_{0}\|_{L^{1}} \|\psi\|_{\S^{v_{3}}_{w_{3}}}$, we get
			\[ \|V^{*}_{\psi}\Phi\|_{\S^{v_{1}}_{w_{1}}} \leq C \|\Phi\|_{w_{0} \otimes v_{0}} , \]
		whence the mapping is well-defined and continuous.
	\end{proof}

	\begin{proposition}
		\label{p:STFTBBSp}
		Let $\V$ and $\W$ be weight function systems satisfying $[\condM]$, $[\condSq]$, and $[\condN]$. For any $\psi \in \S^{[\V]}_{[\W]}$,
		the linear mappings
			\[ V_{\check{\psi}} : \S^{[\V]}_{[\W], 1} \rightarrow C_{[\W \otimes \V]}(\R^{2d}) \quad \text{and} \quad V^{*}_{\psi} : C_{[\W \otimes \V]}(\R^{2d}) \rightarrow \S^{[\V]}_{[\W]}  \]
		are well-defined and continuous. Moreover, if $\gamma \in \S^{[\V]}_{[\W]}$ is a synthesis window for $\check{\psi}$, then,
			\begin{equation}
				\label{eq:STFTReconstructBBSp}
				\frac{1}{(\gamma, \check{\psi})_{L^{2}}} V^{*}_{\gamma} \circ V_{\check{\psi}} = \id_{\S^{[\V]}_{[\W], 1}} . 
			\end{equation}
	\end{proposition}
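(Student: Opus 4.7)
The plan is to promote the single-weight estimates of Lemmas~\ref{l:STFTSpecific} and~\ref{l:AdjointSTFTSpecific} to continuity between the Fr\'echet/$(LB)$ structures via a careful choice of indices, and then deduce the reconstruction identity from the $L^{2}$-level identity \eqref{eq:STFTReconstruct}. The bookkeeping is the main point: each weight condition produces indices for $\V$ and $\W$ separately, and one unifies them by selecting the index that yields the larger of the two weights, which amounts to taking the minimum of indices in the Beurling case and the maximum in the Roumieu case.

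For $V_{\check\psi} : \S^{[\V]}_{[\W],1} \to C_{[\W \otimes \V]}(\R^{2d})$ in the Beurling case, I would fix a target index $\lambda_{0} \in \R_{+}$ and work backwards. Apply $(\condSq)$ to $\V$ and $\W$ separately and unify the results into a single $\lambda_{1}$ satisfying $(v^{\lambda_{0}})^{2} \leq C v^{\lambda_{1}}$ and $(w^{\lambda_{0}})^{2} \leq C w^{\lambda_{1}}$. Then apply $(\condM)$ to $\lambda_{1}$ in both $\V$ and $\W$ to obtain $\lambda_{2}, \lambda_{3}$ splitting $\lambda_{1}$. Since $\psi \in \S^{(\V)}_{(\W)} \subseteq \S^{v^{\lambda_{3}}}_{w^{\lambda_{3}}}$, Lemma~\ref{l:STFTSpecific} yields a continuous map $\S^{v^{\lambda_{2}}}_{w^{\lambda_{2}}, 1} \to C_{w^{\lambda_{0}} \otimes v^{\lambda_{0}}}$, and the universal property of the Fr\'echet limits concludes the case. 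In the Roumieu case, first fix a single $\lambda_{\psi}$ with $\psi \in \S^{v^{\lambda_{\psi}}}_{w^{\lambda_{\psi}}}$. Given a source index $\lambda_{2}$, apply $\{\condM\}$ with the pair $(\lambda_{2}, \lambda_{\psi})$ to each weight system to produce $\lambda_{1}$, then $\{\condSq\}$ to produce $\lambda_{0}$ with $(v^{\lambda_{0}})^{2} \leq C v^{\lambda_{1}}$ and likewise for $\W$; Lemma~\ref{l:STFTSpecific} together with the universal property of the $(LB)$-limits finishes the case.

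The mapping $V^{*}_{\psi} : C_{[\W \otimes \V]}(\R^{2d}) \to \S^{[\V]}_{[\W]}$ is handled in an entirely parallel fashion, now invoking Lemma~\ref{l:AdjointSTFTSpecific} with $[\condN]$ playing the role of $[\condSq]$. In the Beurling case, given a target $\lambda_{1}$, one first applies $(\condM)$ to produce $\lambda_{2}, \lambda_{3}$ and then $(\condN)$ to produce $\lambda_{0}$ with $v^{\lambda_{2}}/v^{\lambda_{0}}, w^{\lambda_{2}}/w^{\lambda_{0}} \in L^{1}$. In the Roumieu case, given a source $\lambda_{0}$, one first applies $\{\condN\}$ to obtain $\lambda_{2}$ with the required integrability and then $\{\condM\}$ with $(\lambda_{2}, \lambda_{\psi})$ to obtain $\lambda_{1}$. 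The positivity hypothesis $\inf v_{2}, \inf w_{2} > 0$ in Lemma~\ref{l:AdjointSTFTSpecific} is automatic, since weight functions take values in $[1, \infty)$.

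For the reconstruction formula \eqref{eq:STFTReconstructBBSp}, observe that any $\varphi \in \S^{[\V]}_{[\W], 1}$ lies in $L^{1}(\R^{d})$ (since $w^{\lambda} \geq 1$) and its Fourier transform lies in $L^{1}(\R^{d})$ as well, so $\varphi \in L^{1} \cap L^{\infty} \subset L^{2}(\R^{d})$. The classical identity \eqref{eq:STFTReconstruct} on $L^{2}$ then gives $V^{*}_{\gamma} V_{\check\psi} \varphi = (\gamma, \check\psi)_{L^{2}} \varphi$ almost everywhere. Both sides are continuous functions on $\R^{d}$---the left-hand side lies in $\S^{[\V]}_{[\W]}$ by the continuity just established---so the equality holds pointwise, yielding \eqref{eq:STFTReconstructBBSp}. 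The only real obstacle throughout is the index bookkeeping required to synchronise $\V$ with $\W$; once this is in place, the analytic content is delivered entirely by the two auxiliary lemmas and the $L^{2}$-level STFT inversion.
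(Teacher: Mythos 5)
Your treatment of the two mapping statements is essentially the paper's own argument made explicit: the paper simply invokes Lemmas \ref{l:STFTSpecific} and \ref{l:AdjointSTFTSpecific}, and your index bookkeeping (unifying the indices produced separately for $\V$ and $\W$ by taking the minimum in the Beurling case and the maximum in the Roumieu case, fixing one index $\lambda_\psi$ for the window in the Roumieu case, and noting that $\inf v_2,\inf w_2\geq 1$) is exactly the routine verification the paper leaves to the reader; it is correct. Where you genuinely diverge is the reconstruction formula \eqref{eq:STFTReconstructBBSp}. The paper does not pass through the $L^2$ theory: for fixed $x$ it observes, using $[\condN]$, that $\varphi(\cdot)\overline{\psi}(x-\cdot)$ and $V_{\check{\psi}}\varphi(x,\cdot)$ are in $L^1(\R^d)$, applies Fourier inversion in $\xi$ and then Fubini in $x$ to compute $V^*_{\gamma}V_{\check{\psi}}\varphi(t)=(\gamma,\check{\psi})_{L^2}\varphi(t)$ directly and pointwise. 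Your reduction to \eqref{eq:STFTReconstruct} also works, but to make it complete you should record two points you currently gloss over: first, that $\psi,\gamma\in L^1\cap L^\infty\subset L^2$ as well (this uses $w^\lambda\geq 1$ together with $[\condN]$, not merely boundedness), so that $V_{\check{\psi}}\varphi\in L^2(\R^{2d})$ and the $L^2$ inversion formula is applicable at all; second, and more substantively, the operator $V^*_{\gamma}$ in the proposition is defined on $C_{[\W\otimes\V]}(\R^{2d})$ by an absolutely convergent pointwise integral, whereas \eqref{eq:STFTReconstruct} concerns the weak-integral adjoint on $L^2(\R^{2d})$, so you must note that the two definitions agree on $V_{\check{\psi}}\varphi$ (a short Fubini argument pairing against test functions). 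Once that identification is made, your continuity argument correctly upgrades the a.e. identity to a pointwise one. The paper's direct computation buys precisely the avoidance of this reconciliation step; your route buys a shorter calculation at the cost of these two (easy, but necessary) verifications.
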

	
	\begin{proof}
		Lemmas \ref{l:STFTSpecific} and \ref{l:AdjointSTFTSpecific} imply that the linear mappings are well-defined and continuous. Now, take any $\varphi \in \S^{[\V]}_{[\W], 1}$, then due to the condition $[\condN]$, the functions $\varphi(\cdot) \overline{\psi}(x - \cdot)$ and $V_{\check{\psi}} \varphi(x, \cdot)$, for fixed $x \in \R^{d}$, both belong to $L^{1}(\R^{d})$. As $V_{\check{\psi}} \varphi(x, \xi) = \mathcal{F}(\varphi(\cdot) \overline{\psi}(x - \cdot))(\xi)$, we obtain that
			\begin{align*}
				\iint_{\R^{2d}} V_{\check{\psi}}\varphi(x, \xi) \gamma(t - x) e^{2 \pi i \xi \cdot t} dx d\xi
					&= \int_{\R^{d}} \left( \int_{\R^{d}} V_{\check{\psi}} \varphi(x, \xi) e^{2 \pi i \xi \cdot t} d\xi \right) \gamma(t - x) dx \\
					&= \varphi(t) \int_{\R^{d}} \overline{\psi}(x - t) \gamma(t - x) dx = (\gamma, \check{\psi})_{L^{2}} \varphi(t)
			\end{align*}
		for all $t \in \R^{d}$.
	\end{proof}
	
	\begin{proposition}
		\label{p:BBSpInftyandL1}
		Let $\V$ and $\W$ be weight function systems satisfying $[\condM]$, $[\condSq]$, and $[\condN]$. Then, $\S^{[\V]}_{[\W]} = \S^{[\V]}_{[\W], 1}$ as locally convex spaces.
	\end{proposition}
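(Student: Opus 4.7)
The continuous embedding $\S^{[\V]}_{[\W]} \hookrightarrow \S^{[\V]}_{[\W], 1}$ is already given by Lemma \ref{l:BBSpInftyInBBSpL1}, so the task reduces to establishing the reverse continuous embedding. The plan is to factor the identity on $\S^{[\V]}_{[\W], 1}$ through $\S^{[\V]}_{[\W]}$ via the STFT reconstruction formula \eqref{eq:STFTReconstructBBSp}: once one exhibits windows $\psi, \gamma \in \S^{[\V]}_{[\W]}$ with $(\gamma, \check\psi)_{L^{2}} \neq 0$, Proposition \ref{p:STFTBBSp} will directly yield that
\[ \id_{\S^{[\V]}_{[\W], 1}} \;=\; \frac{1}{(\gamma, \check\psi)_{L^{2}}}\, V^{*}_{\gamma} \circ V_{\check\psi} \;:\; \S^{[\V]}_{[\W], 1} \longrightarrow \S^{[\V]}_{[\W]} \]
is a continuous linear map, i.e.\ precisely the embedding sought.

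If $\S^{[\V]}_{[\W], 1} = \{0\}$ there is nothing to prove. Otherwise, Lemma \ref{l:BBSpNonTrivialEquiv} will supply a non-zero $\psi \in \S^{[\V]}_{[\W]} \cap \S^{[\V]}_{[\W], 1}$, and the only delicate point is to produce $\gamma \in \S^{[\V]}_{[\W]}$ with $(\gamma, \check\psi)_{L^{2}} \neq 0$. I will use that, since $\W$ and $\V$ both satisfy $[\condM]$, the space $\S^{[\V]}_{[\W]}$ is stable under the time-frequency shifts $M_{\xi} T_{x}$ for every $(x, \xi) \in \R^{2d}$ (translation invariance comes from $[\condM]$ on $\W$, modulation invariance from $[\condM]$ on $\V$ after Fourier transforming). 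Because $\psi \in L^{1}(\R^{d}) \cap L^{\infty}(\R^{d}) \subseteq L^{2}(\R^{d})$ and $\check\psi \neq 0$, the orthogonality relation for the STFT on $L^{2}$ yields $V_{\psi} \check\psi \not\equiv 0$. I can therefore pick $(x_{0}, \xi_{0}) \in \R^{2d}$ with
\[ (\check\psi, M_{\xi_{0}} T_{x_{0}} \psi)_{L^{2}} \;=\; V_{\psi} \check\psi(x_{0}, \xi_{0}) \;\neq\; 0, \]
and set $\gamma = M_{\xi_{0}} T_{x_{0}} \psi \in \S^{[\V]}_{[\W]}$; this supplies the required pair of windows.

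The heart of the argument, and its only real obstacle, is this window-selection step: since $\W$ and $\V$ are not assumed to be invariant under reflection, $\check\psi$ itself need not lie in $\S^{[\V]}_{[\W]}$, which rules out the naive choice $\gamma = \check\psi$, while the alternative $\gamma = \psi$ would demand $(\psi, \check\psi)_{L^{2}} \neq 0$, a condition that can fail (for instance when $\psi$ and $\check\psi$ have disjoint supports). The modulation-translation trick circumvents both pitfalls by moving within $\S^{[\V]}_{[\W]}$ via its $M_{\xi} T_{x}$-invariance and reducing the existence of a suitable $\gamma$ to the standard $L^{2}$-injectivity of the STFT.
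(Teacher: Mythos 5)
Your proof is correct and follows the same overall strategy as the paper: reduce to the continuity of $\id_{\S^{[\V]}_{[\W],1}} = \frac{1}{(\gamma,\check{\psi})_{L^{2}}}V^{*}_{\gamma}\circ V_{\check{\psi}}$ into $\S^{[\V]}_{[\W]}$ via Proposition \ref{p:STFTBBSp}, after handling the trivial case and producing a nonzero $\psi$ through Lemmas \ref{l:BBSpInftyInBBSpL1} and \ref{l:BBSpNonTrivialEquiv}. The one place where you deviate is the window-selection step, and your version is in fact the more robust of the two. The paper takes $\gamma = \psi(\cdot - x)$ for a suitable $x$, justified by the assertion $\check{\psi}\ast\check{\overline{\psi}}\neq 0$; since the Fourier transform of $\check{\psi}\ast\check{\overline{\psi}}$ is $\widehat{\psi}(-\xi)\,\overline{\widehat{\psi}(\xi)}$, that assertion amounts to $\widehat{\psi}\cdot\widehat{\psi}(-\cdot)\not\equiv 0$, which can fail for a badly chosen $\psi$ (e.g.\ $\widehat{\psi}$ supported in a set disjoint from its reflection, a situation that does occur in Gevrey-type Beurling--Bj\"orck spaces), in which case no pure translate of $\psi$ is a synthesis window for $\check{\psi}$. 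Your choice $\gamma = M_{\xi_{0}}T_{x_{0}}\psi$ avoids this entirely: Moyal's identity gives $\|V_{\psi}\check{\psi}\|_{L^{2}(\R^{2d})} = \|\psi\|_{L^{2}}^{2} > 0$ (and $\psi\in L^{1}\cap L^{\infty}\subset L^{2}$ is justified exactly as you say, using $[\condN]$), so some time--frequency shift always works, and the stability of $\S^{[\V]}_{[\W]}$ under $M_{\xi}T_{x}$ indeed follows from $[\condM]$ for $\W$ (time side) and for $\V$ (Fourier side), in both the Beurling and Roumieu cases. So your extra modulation is not redundant caution; it closes precisely the loophole you point out, at the modest cost of invoking the $L^{2}$ orthogonality relation for the STFT, which the paper's argument does not use.
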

	
	\begin{proof}
		By Lemma \ref{l:BBSpInftyInBBSpL1} we only have to show that $\S^{[\V]}_{[\W], 1} \subseteq \S^{[\V]}_{[\W]}$ continuously and we may assume that $\S^{[\V]}_{[\W], 1} \neq \{0\}$. Lemma \ref{l:BBSpNonTrivialEquiv} yields $\S^{[\V]}_{[\W]} \neq \{0\}$. We then pick a non-trivial $\psi \in \S^{[\V]}_{[\W]}$. Since $\check{\psi}\ast \check{\overline{\psi}}\neq0$, we can select some $x\in\R^d$ such that $\gamma = \psi(\:\cdot- x\:) \in \S^{[\V]}_{[\W]}$ is a synthesis window for $\check{\psi}$. We may assume that $(\gamma,\check{\psi})_{L^{2}} = 1$. Proposition \ref{p:STFTBBSp} then says that $\id_{\S^{[\V]}_{[\W], 1}} = V^{*}_{\gamma} \circ V_{\check{\psi}} : \S^{[\V]}_{[\W], 1} \rightarrow \S^{[\V]}_{[\W]}$ is continuous, which yields the assertion.
	\end{proof}
		
We now show the implication $(i) \Rightarrow (ii)$ of Theorem \ref{t:NuclearityBBSp} by means of Grothendieck's criterion for nuclearity in terms of summable sequences \cite{G-ProdTensEspNucl}. Let $E$ be a lcHs (= Hausdorff locally convex space) and denote by $\csn(E)$ the set of all continuous seminorms on $E$. We call a sequence $(e_{n})_{n \in \N}$ in $E$ \emph{weakly summable} if $\sum_{n \in \N} |\ev{e^{\prime}}{e_{n}}| < \infty$ for any $e^{\prime} \in E^{\prime}$. As a consequence of Mackey's theorem, a sequence $(e_{n})_{n \in \N}$ in $E$ is weakly summable if and only if the set
	\[ \bigcup_{k \in \N} \{ \sum_{n = 0}^{k} c_{n} e_{n} : |c_{n}| \leq 1, n = 0, \ldots, k \} \]
is bounded in $E$. A sequence $(e_{n})_{n \in \N}$ in $E$ is called \emph{absolutely summable} if $\sum_{n \in \N} p(e_{n}) < \infty$ for any $p \in \csn(E)$ (or equivalently, for any $p$ in a fundamental system of seminorms on $E$). Clearly, any absolutely summable sequence is weakly summable. In case of a Fr\'{e}chet space or a $(DF)$-space, the validity of the converse implication is equivalent to nuclearity. 

	\begin{proposition}[{\cite[Theorem 4.2.5]{P-NuclLCS}}]
		\label{p:NuclEquivSequences}
		Let $E$ be a Fr\'{e}chet space or a $(DF)$-space. Then, $E$ is nuclear if and only if every weakly summable sequence in $E$ is absolutely summable.
	\end{proposition}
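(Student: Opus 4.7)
The plan is to prove both directions via Grothendieck's reformulation of nuclearity in terms of the local Banach spaces $\widehat{E}_{p}$ (the completion of $E/p^{-1}(0)$ for $p \in \csn(E)$) together with the bridge between nuclear and absolutely summing operators between Banach spaces.

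For the forward implication, assume $E$ is nuclear and let $(e_n)_{n \in \N}$ be weakly summable in $E$. Fix $p \in \csn(E)$. By nuclearity there exists $q \in \csn(E)$ with $q \geq p$ such that the canonical linking map $T \colon \widehat{E}_{q} \to \widehat{E}_{p}$ is nuclear, hence absolutely summing. Writing $\iota_q \colon E \to \widehat{E}_{q}$ for the canonical map, the image sequence $(\iota_q(e_n))$ is weakly summable in $\widehat{E}_{q}$, since every continuous functional on $\widehat{E}_{q}$ pulls back through $\iota_q$ to a continuous functional on $E$. The absolutely summing property of $T$ therefore yields $\sum_n p(e_n) = \sum_n \|T\iota_q(e_n)\|_{\widehat{E}_{p}} < \infty$, and since $p$ was arbitrary, $(e_n)$ is absolutely summable.

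For the converse, I argue by contrapositive and construct a weakly summable but not absolutely summable sequence when $E$ is not nuclear. In the Fr\'echet case, with an increasing fundamental system $(p_k)_{k \in \N}$, non-nuclearity supplies some $k_0$ such that for every $l \geq k_0$ the linking map $T_l \colon \widehat{E}_{p_l} \to \widehat{E}_{p_{k_0}}$ fails to be absolutely summing (using the standard equivalence: $E$ is nuclear iff for each $p$ some linking $\widehat{E}_{q} \to \widehat{E}_{p}$ is absolutely summing). By the failure of the Pietsch domination inequality for $T_l$, for each such $l$ there exists a finite family $(x_i^{(l)})_{i = 1}^{N_l}$ in $E$ with
\begin{equation*}
 \sum_i |\langle \varphi, x_i^{(l)}\rangle| \leq 2^{-l} \quad \text{for every } \varphi \in E' \text{ with } |\langle\varphi, x\rangle| \leq p_l(x), \; x \in E,
\end{equation*}
yet $\sum_i p_{k_0}(x_i^{(l)}) \geq 1$. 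Concatenating these blocks produces a sequence $(e_n)$ in $E$ with $\sum_n p_{k_0}(e_n) = \infty$. Weak summability follows because any $e' \in E'$ is continuous with respect to some $p_{l_0}$, and since $\|\cdot\|_{p_l}^\circ$ is non-increasing in $l$ on $\widehat{E}_{p_{l_0}}'$, the tail estimate $\sum_{l \geq l_0} \sum_i |\langle e', x_i^{(l)}\rangle| \leq \|e'\|_{p_{l_0}}^\circ \sum_{l \geq l_0} 2^{-l} < \infty$ holds. The $(DF)$ case is treated by an analogous argument using the fundamental sequence of bounded sets in $E$, or alternatively by transferring the property to the Fr\'echet strong dual and using that nuclearity passes to duals in this setting.

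The main obstacle lies in the converse direction, specifically the quantitative translation from the abstract failure of nuclearity of each $T_l$ into concrete finite families in $E$ that simultaneously satisfy the uniform smallness condition on $E'$ (needed for weak summability after assembly) and a fixed lower bound on the $p_{k_0}$-seminorm sums (needed for the blow-up). This extraction is the core of the Pietsch-Grothendieck machinery, and once the blocks are produced, the diagonal concatenation with geometric decay of the summing norms is routine.
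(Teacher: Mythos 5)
First, a point of comparison: the paper does not prove this proposition at all — it is quoted from Pietsch \cite[Theorem 4.2.5]{P-NuclLCS} — so your proposal has to stand on its own. The forward implication as you present it is correct and standard: nuclear linking maps are absolutely summing, a weakly summable sequence pushes forward to a weakly summable sequence in the local Banach space $\widehat{E}_{q}$, and absolutely summing operators convert weak summability into absolute summability. The Fr\'echet half of the converse is also essentially the classical Grothendieck--Pietsch argument: granting the (nontrivial but citable) characterization of nuclearity through absolutely summing linking maps, your block construction with the normalization $\leq 2^{-l}$ against $\geq 1$, and the verification of weak summability via $|e'| \leq C p_{l_0} \leq C p_{l}$, are sound; one only needs the routine remark that the finite families can be taken in $E$ itself by density of its canonical image in $\widehat{E}_{p_{l}}$, and that failure of the summing property for a seminorm $p$ passes to any larger $p_{k_0}$ of the fundamental system.

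The genuine gap is the $(DF)$ case, which you dispatch in one sentence and which is precisely the case the paper needs (the Roumieu space $\S^{\{\V\}}_{\{\W\}}$ is an $(LB)$-, hence $(DF)$-space). The ``analogous argument using the fundamental sequence of bounded sets'' does not go through as stated: your Fr\'echet construction hinges on a countable increasing cofinal family of seminorms, whereas a $(DF)$-space has a countable fundamental system of \emph{bounded sets} but in general no countable fundamental system of seminorms (otherwise it would be metrizable, hence normable), so non-nuclearity does not hand you a fixed $p_{k_0}$ together with a sequence $p_{l}$ exhausting $\csn(E)$ against which to run the block construction. The alternative route, ``transfer to the Fr\'echet strong dual,'' is likewise unsubstantiated: for $(DF)$-spaces it is true that $E$ is nuclear if and only if $E^{\prime}_{\beta}$ is, but your hypothesis concerns sequences in $E$, and you give no argument that it forces weakly summable sequences in $E^{\prime}_{\beta}$ to be absolutely summable — this is not a formal duality statement. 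The dual-metric half of Pietsch's theorem requires a genuinely different argument (in essence one shows that the algebraic equality of the spaces of weakly and absolutely summable sequences implies the continuity of the identity between them by exploiting the $(DF)$ structure), so as it stands your proof establishes the proposition only for Fr\'echet spaces.
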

	
	\begin{proof}[Proof of Theorem \ref{t:NuclearityBBSp} $(i) \Rightarrow (ii)$]
		We will show the nuclearity of $\S^{[\V]}_{[\W]}$ using Proposition \ref{p:NuclEquivSequences}. For it, let $(\varphi_{n})_{n \in \N}$ be a weakly summable sequence in $\S^{[\V]}_{[\W]}$. Then, for any $\lambda \in \R_{+}$ (for some $\lambda \in \R_{+}$ by Proposition \ref{p: completeness inductive}) there is a $C > 0$ such that
			\[ \Big\|\sum_{n = 0}^{k} c_{n} \varphi_{n}\Big\|_{\S^{v^{\lambda}}_{w^{\lambda}}} \leq C \]
		for all $k \in \N$ and $|c_{n}| \leq 1$, $n = 0, \ldots, k$. We first show that
			\begin{equation}
				\label{eq:NuclProofEq}
				\sup_{x \in \R^{d}} \sum_{n \in \N} |\varphi_{n}(x)| w^{\lambda}(x) \leq C \quad \text{and} \quad \sup_{\xi \in \R^{d}} \sum_{n \in \N} |\widehat{\varphi}_{n}(\xi)| v^{\lambda}(\xi) \leq C .
			\end{equation} 
		Fix $x, \xi \in \R^{d}$ and choose $|c^{1}_{n}(x)| \leq 1$ and $|c^{2}_{n}(\xi)| \leq 1$ such that $c^{1}_{n}(x) \varphi_{n}(x) = |\varphi_{n}(x)|$ and $c^{2}_{n}(\xi) \widehat{\varphi}_{n}(\xi) = |\widehat{\varphi}_{n}(\xi)|$ for any $n \in \N$. Then, for arbitrary $k \in \N$, we have
			\[ \sum_{n = 0}^{k} |\varphi_{n}(x)| w^{\lambda}(x) = \left|\sum_{n = 0}^{k} c^{1}_{n}(x) \varphi_{n}(x)\right| w^{\lambda}(x) \leq C , \]
		and 
			\[ \sum_{n = 0}^{k} |\widehat{\varphi}_{n}(\xi)| v^{\lambda}(\xi) = \left|\sum_{n = 0}^{k} c^{2}_{n}(\xi) \widehat{\varphi}_{n}(\xi)\right| v^{\lambda}(\xi) \leq C , \]
		whence \eqref{eq:NuclProofEq} follows by letting $k \rightarrow \infty$. Because of Proposition \ref{p:BBSpInftyandL1}, to prove the nuclearity it suffices to show that
			\[ \sum_{n = 0}^{\infty} \|\varphi_{n}\|_{\S^{v^{\mu}}_{w^{\mu}, 1}} < \infty \]
		for all $\mu \in \R_{+}$ (for some $\mu \in \R_{+}$). Let $\mu \in \R_{+}$ be arbitrary (let $\lambda \in \R_{+}$ be such that \eqref{eq:NuclProofEq} holds), then $[\condN]$ implies there is some $\lambda \in \R_{+}$ (some $\mu \in \R_{+}$) such that $w^{\mu} / w^{\lambda} \in L^{1}$ and $v^{\mu} / v^{\lambda} \in L^{1}$. We get
			\begin{align*} 
				\sum_{n = 0}^{\infty} \|\varphi_{n}\|_{\S^{v^{\mu}}_{w^{\mu}, 1}} 
					&= \sum_{n = 0}^{\infty} \int_{\R^{d}} |\varphi_{n}(x)| w^{\mu}(x) dx + \int_{\R^{d}} |\widehat{\varphi}_{n}(\xi)| v^{\mu}(\xi) d\xi \\
					&= \int_{\R^{d}} \sum_{n = 0}^{\infty} |\varphi_{n}(x)| w^{\lambda}(x) \frac{w^{\mu}(x)}{w^{\lambda}(x)} dx + \int_{\R^{d}} \sum_{n = 0}^{\infty} |\widehat{\varphi}_{n}(\xi)| v^{\lambda}(\xi) \frac{v^{\mu}(\xi)}{v^{\lambda}(\xi)} d\xi \\
					&\leq C(\|w^{\mu} / w^{\lambda}\|_{L^{1}} + \|v^{\mu} / v^{\lambda}\|_{L^{1}}) .
			\end{align*}
	\end{proof}

\section{The kernel theorems}
\label{sec:KernelThms}

In this section we show the main result of this paper, that is, the kernel theorems for the spaces $\S^{[\V]}_{[\W]}$.

	\begin{theorem}
		\label{t:KernelTheorems}
		Let $\V_{j}$ and $\W_{j}$ be weight function systems on $\R^{d_{j}}$ satisfying $[\condM]$, $[\condSq]$, and $[\condN]$ for $j = 1, 2$. The following canonical isomorphisms of locally convex spaces hold
			\begin{equation}
				\label{eq:KernelTheoremBBSp}
				\S^{[\V_{1} \otimes \V_{2}]}_{[\W_{1} \otimes \W_{2}]}(\R^{d_{1} + d_{2}}) \cong \S^{[\V_{1}]}_{[\W_{1}]}(\R^{d_{1}}) \compltens \S^{[\V_{2}]}_{[\W_{2}]}(\R^{d_{2}}) \cong \mathcal{L}_{\beta}(\S^{[\V_{1}]}_{[\W_{1}]}(\R^{d_{1}})^{\prime}, \S^{[\V_{2}]}_{[\W_{2}]}(\R^{d_{2}}))
			\end{equation}
		and
			\begin{equation}
				\label{eq:KernelTheoremDual}
				\S^{[\V_{1} \otimes \V_{2}]}_{[\W_{1} \otimes \W_{2}]}(\R^{d_{1} + d_{2}})^{\prime} \cong \S^{[\V_{1}]}_{[\W_{1}]}(\R^{d_{1}})^{\prime}\compltens \S^{[\V_{2}]}_{[\W_{2}]}(\R^{d_{2}})^{\prime} \cong \mathcal{L}_{\beta}(\S^{[\V_{1}]}_{[\W_{1}]}(\R^{d_{1}}), \S^{[\V_{2}]}_{[\W_{2}]}(\R^{d_{2}})^{\prime}) .
			\end{equation}
	\end{theorem}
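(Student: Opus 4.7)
The strategy is to combine the STFT identifications from Proposition \ref{p:STFTBBSp} with the classical $\varepsilon$-tensor product representation of weighted continuous function spaces. Theorem \ref{t:NuclearityBBSp} is in force under our hypotheses, so every Beurling-Bj\"orck space appearing is nuclear; together with Proposition \ref{p: completeness inductive}, this gives a nuclear Fr\'echet space in the Beurling case and a complete nuclear $(LB)$-space (a $(DFN)$-space) in the Roumieu case. Consequently $\compltens$ in \eqref{eq:KernelTheoremBBSp} and \eqref{eq:KernelTheoremDual} is the same whether realised as $\widehat{\otimes}_\varepsilon$ or $\widehat{\otimes}_\pi$, and the second isomorphism in each line reduces to standard nuclear space theory ($E \compltens F \cong \mathcal{L}_\beta(E'_\beta, F)$ for nuclear Fr\'echet or complete $(DF)$-spaces), provided the first is in place. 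Thus the heart of the argument is the first isomorphism of \eqref{eq:KernelTheoremBBSp}.

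To prove it, I would fix non-trivial $\psi_j, \gamma_j \in \S^{[\V_j]}_{[\W_j]}(\R^{d_j})$ with $(\gamma_j, \check\psi_j)_{L^2} = 1$, exactly as in the proof of Proposition \ref{p:BBSpInftyandL1}, and set $\psi = \psi_1 \otimes \psi_2$, $\gamma = \gamma_1 \otimes \gamma_2$. Applying Proposition \ref{p:STFTBBSp} on $\R^{d_1+d_2}$ realises $\S^{[\V_1\otimes\V_2]}_{[\W_1\otimes\W_2]}(\R^{d_1+d_2})$ as a complemented topological subspace of $C_{[(\W_1\otimes\W_2)\otimes(\V_1\otimes\V_2)]}(\R^{2(d_1+d_2)})$ via the continuous section $V_{\check\psi}$ and retraction $V_\gamma^*$. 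The coordinate swap $(x_1,x_2,\xi_1,\xi_2) \mapsto (x_1,\xi_1,x_2,\xi_2)$ re-writes this ambient space as $C_{[(\W_1\otimes\V_1)\otimes(\W_2\otimes\V_2)]}(\R^{2d_1+2d_2})$. A direct computation shows that $V_{\check\psi}(\varphi_1 \otimes \varphi_2) = (V_{\check{\psi_1}}\varphi_1) \otimes (V_{\check{\psi_2}}\varphi_2)$ after this swap, so the complemented embedding is the (completed) tensor product of the corresponding complemented embeddings $\S^{[\V_j]}_{[\W_j]}(\R^{d_j}) \hookrightarrow C_{[\W_j\otimes\V_j]}(\R^{2d_j})$.

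Next I invoke the $\varepsilon$-tensor product representation
\[ C_{[\W_1\otimes\V_1]}(\R^{2d_1}) \compltens C_{[\W_2\otimes\V_2]}(\R^{2d_2}) \;\cong\; C_{[(\W_1\otimes\V_1)\otimes(\W_2\otimes\V_2)]}(\R^{2d_1+2d_2}), \]
a well-known identity of Bierstedt-Meise-Summers type for weighted spaces of continuous functions. In the Beurling case it reduces to the Banach-space identity $C_{w_1}(\R^{n_1}) \compltens C_{w_2}(\R^{n_2}) \cong C_{w_1 \otimes w_2}(\R^{n_1+n_2})$ via countable projective limits. In the Roumieu case one first passes to the projective descriptions $C_{\{\W_j \otimes \V_j\}} = C\overline{V}(\W_j \otimes \V_j)$ furnished by Lemma \ref{l:ProjDescrContinuousFunctions}, and then exploits the upward density of $\overline{V}(\W_1 \otimes \V_1) \otimes \overline{V}(\W_2 \otimes \V_2)$ in $\overline{V}((\W_1 \otimes \V_1) \otimes (\W_2 \otimes \V_2))$ recorded after that lemma to reduce to the Banach level. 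Composing this identity with the tensor product of complemented embeddings from the previous step yields the first isomorphism of \eqref{eq:KernelTheoremBBSp}.

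The dual statement \eqref{eq:KernelTheoremDual} follows by taking strong duals in \eqref{eq:KernelTheoremBBSp} and using that nuclearity ensures $(E \compltens F)'_\beta \cong E'_\beta \compltens F'_\beta$ for our nuclear Fr\'echet and $(DFN)$-spaces, together with the standard $\mathcal{L}_\beta$-form. The principal obstacle I anticipate is the Roumieu case of the $\varepsilon$-tensor identity for $C_{\{\W\}}$-spaces and its lift to $\S^{\{\V\}}_{\{\W\}}$: the $(LB)$-topology prevents a direct Banach-level argument and forces the use of Theorem \ref{t:ProjDescr}, the projective description of $\S^{\{\V\}}_{\{\W\}}$ through a Nachbin-type family, to ensure that the STFT intertwines the projective description at the level of $\S$ with that at the level of $C_{\{\cdot\}}$ from Lemma \ref{l:ProjDescrContinuousFunctions}; once this is in place, the Banach-level tensor product identity lifts as in the Beurling case and the whole proof goes through uniformly.
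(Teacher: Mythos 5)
Your proposal follows essentially the same route as the paper's proof: reduce everything to the first isomorphism in \eqref{eq:KernelTheoremBBSp} via nuclearity (Theorem \ref{t:NuclearityBBSp}) and standard nuclear-space theory, then identify $\S^{[\V_{1}\otimes\V_{2}]}_{[\W_{1}\otimes\W_{2}]}$ with $\S^{[\V_{1}]}_{[\W_{1}]}\compltens\S^{[\V_{2}]}_{[\W_{2}]}$ by a commuting STFT diagram with windows $\psi_{1}\otimes\psi_{2}$, $\gamma_{1}\otimes\gamma_{2}$, the Bierstedt--Meise--Summers identity $C_{[\W_{1}\otimes\V_{1}]}\compltens C_{[\W_{2}\otimes\V_{2}]}\cong C_{[(\W_{1}\otimes\V_{1})\otimes(\W_{2}\otimes\V_{2})]}$, and Theorem \ref{t:ProjDescr} in the Roumieu case. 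This is exactly the paper's argument, so the proposal is correct and not substantively different.
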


Let us first briefly recall some standard notions about tensor products for the sake of the reader's convenience. We always topologize dual spaces with the strong dual topology. Given two lcHs $E$ and $F$, we write $\mathcal{L}_{\beta}(E, F)$ for the space of continuous linear mappings from $E$ into $F$ endowed with the topology of uniform convergence over the bounded subsets of $E$. For the tensor product $E \otimes F$, we write $E \otimes_{\varepsilon} F$, respectively $E \otimes_{\pi} F$, if we put on it the $\varepsilon$-topology, respectively the projective topology. We write $E \widehat{\otimes}_{\varepsilon} F$ and $E \widehat{\otimes}_{\pi} F$ for the completion of the tensor product $E \otimes F$ with respect to the $\varepsilon$-topology and the projective topology, respectively. If either $E$ or $F$ is nuclear, then $E \widehat{\otimes}_{\varepsilon} F = E \widehat{\otimes}_{\pi} F$ and we drop the subscripts $\varepsilon$ and $\pi$ in the notation.

To show Theorem \ref{t:KernelTheorems} in the Roumieu case, we will work with the so-called projective description of $\S^{\{\V\}}_{\{\W\}}$. Therefore, we will first show in the next subsection that under the assumptions of Theorem \ref{t:KernelTheorems} $\S^{\{\V\}}_{\{\W\}}$ and its projective description coincide as locally convex spaces, see Theorem \ref{t:ProjDescr}. In Subsection \ref{subsect: proof kernels}, we provide a proof of Theorem \ref{t:KernelTheorems}. 
	
\subsection{The projective description of $\S^{\{\V\}}_{\{\W\}}$}
\label{sec:ProjDescr}

We consider the space $\overline{\S}^{\{\V\}}_{\{\W\}}$ of all $\varphi \in C(\R^{d})$ such that $\varphi \in \S^{v}_{w}$ for any $v \in \overline{V}(\V)$ and $w \in \overline{V}(\W)$, and we endow it with the locally convex topology generated by the system of seminorms $\{ \|\cdot\|_{\S^{v}_{w}} : v \in \overline{V}(\V), w \in \overline{V}(\W) \}$. We call the space $\overline{\S}^{\{\V\}}_{\{\W\}}$ the \emph{projective description of $\S^{\{\V\}}_{\{\W\}}$}. As sets, $\S^{\{\V\}}_{\{\W\}}$ and $\overline{\S}^{\{\V\}}_{\{\W\}}$ describe the same family of functions.

	\begin{lemma}
		\label{l:ProjectiveDescriptionContinuousEmbedding}
		Let $\V$ and $\W$ be weight function systems. Then, $\S^{\{\V\}}_{\{\W\}}$ and $\overline{\mathcal{S}}^{\{\V\}}_{\{\W\}}$ coincide algebraically and the inclusion mapping $\S^{\{\V\}}_{\{\W\}} \rightarrow \overline{\S}^{\{\V\}}_{\{\W\}}$ is continuous. 
	\end{lemma}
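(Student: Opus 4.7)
The plan is to establish the two inclusions as sets and then check continuity of the stated map; the key ingredient will be Lemma~\ref{l:BoundednessNachbinFamily}, applied separately on the ``space side'' and the ``frequency side.''

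\textbf{Algebraic inclusion $\S^{\{\V\}}_{\{\W\}} \subseteq \overline{\S}^{\{\V\}}_{\{\W\}}$.} This is the easy direction. Take $\varphi \in \S^{\{\V\}}_{\{\W\}}$; then $\varphi \in \S^{v^{\lambda}}_{w^{\lambda}}$ for some $\lambda \in \R_{+}$. By definition of the maximal Nachbin family, every $w \in \overline{V}(\W)$ satisfies $w \leq C_{w} w^{\lambda}$ pointwise for some constant $C_{w} > 0$, and likewise for $v \in \overline{V}(\V)$. Hence $\|\varphi\|_{w} \leq C_{w}\|\varphi\|_{w^{\lambda}}$ and $\|\widehat{\varphi}\|_{v} \leq C_{v}\|\widehat{\varphi}\|_{v^{\lambda}}$, so $\varphi \in \S^{v}_{w}$ for every admissible pair.

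\textbf{Algebraic inclusion $\overline{\S}^{\{\V\}}_{\{\W\}} \subseteq \S^{\{\V\}}_{\{\W\}}$.} Let $\varphi \in \overline{\S}^{\{\V\}}_{\{\W\}}$. Apply Lemma~\ref{l:BoundednessNachbinFamily} to the singleton $B = \{\varphi\}$ with respect to $\W$: since $\|\varphi\|_{w} < \infty$ for every $w \in \overline{V}(\W)$, there exists $\lambda_{1} \in \R_{+}$ with $\|\varphi\|_{w^{\lambda_{1}}} < \infty$. Apply the same lemma to the singleton $B = \{\widehat{\varphi}\}$ with respect to $\V$ to obtain $\lambda_{2} \in \R_{+}$ with $\|\widehat{\varphi}\|_{v^{\lambda_{2}}} < \infty$. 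Setting $\lambda = \max(\lambda_{1}, \lambda_{2})$, the monotonicity of the systems $\W$ and $\V$ (smaller weight for larger index) gives $\|\varphi\|_{w^{\lambda}} + \|\widehat{\varphi}\|_{v^{\lambda}} < \infty$, so $\varphi \in \S^{v^{\lambda}}_{w^{\lambda}} \subseteq \S^{\{\V\}}_{\{\W\}}$.

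\textbf{Continuity of the inclusion.} By the universal property of the $(LB)$-space $\S^{\{\V\}}_{\{\W\}}$, it suffices to verify that each step map $\S^{v^{\lambda}}_{w^{\lambda}} \to \overline{\S}^{\{\V\}}_{\{\W\}}$ is continuous. For fixed $v \in \overline{V}(\V)$ and $w \in \overline{V}(\W)$, the constants $C_{v}, C_{w}$ from the first paragraph yield
\[
\|\varphi\|_{\S^{v}_{w}} = \|\varphi\|_{w} + \|\widehat{\varphi}\|_{v} \leq (C_{w} + C_{v}) \|\varphi\|_{\S^{v^{\lambda}}_{w^{\lambda}}},
\]
which is exactly the required continuity estimate.

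The only mildly non-trivial step is the second (the reverse inclusion); everything else is bookkeeping with the defining bounds of $\overline{V}(\W)$ and $\overline{V}(\V)$. Lemma~\ref{l:BoundednessNachbinFamily} does the real work of promoting pointwise finiteness under all Nachbin weights to finiteness under a single system weight, and we simply invoke it twice, once for the function and once for its Fourier transform.
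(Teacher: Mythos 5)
Your proof is correct and follows essentially the same route as the paper: the reverse inclusion via Lemma~\ref{l:BoundednessNachbinFamily} applied to $\{\varphi\}$ and $\{\widehat{\varphi}\}$ with $\lambda = \max\{\lambda_1,\lambda_2\}$ is exactly the paper's argument, and the continuity of the inclusion (which the paper dismisses as clear) you justify by the standard pointwise bounds $w \leq C_w w^{\lambda}$ from the definition of $\overline{V}(\W)$ together with the universal property of the inductive limit.
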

	
	\begin{proof}
		It is clear that the inclusion mapping $\S^{\{\V\}}_{\{\W\}} \rightarrow \overline{\S}^{\{\V\}}_{\{\W\}}$ is well-defined and continuous. Suppose now that $\varphi \in \overline{S}^{\{\V\}}_{\{\W\}}$, then $\|\varphi\|_{w} < \infty$ for any $w \in \overline{V}(\W)$ and $\|\widehat{\varphi}\|_{v} < \infty$ for any $v \in \overline{V}(\V)$. By Lemma \ref{l:BoundednessNachbinFamily} it follows that for certain $\lambda_{0}, \lambda_{1} \in \R_{+}$ we have $\|\varphi\|_{w^{\lambda_{0}}} < \infty$ and $\|\widehat{\varphi}\|_{v^{\lambda_{1}}} < \infty$. Taking $\lambda = \max\{\lambda_{0}, \lambda_{1}\}$, it follows that $\varphi \in \S^{v^{\lambda}}_{w^{\lambda}}$.
	\end{proof} 

We shall now show that the spaces $\S^{\{\V\}}_{\{\W\}}$ and $\overline{\S}^{\{\V\}}_{\{\W\}}$ also coincide topologically for two weight function systems $\V$ and $\W$ satisfying $\{\condM\}$, $\{\condSq\}$, and $\{\condN\}$. This will be done by finding the STFT characterization of the projective description and applying Lemma \ref{l:ProjDescrContinuousFunctions}. Before we are able to do this, we first need the following three lemmas on the maximal Nachbin family of a weight function system.

	\begin{lemma}
		\label{l:NachbinModerate}
		Let $\W$ be a weight function system satisfying $\{\condM\}$. Then, for any $\nu \in \R_{+}$ and $w \in \overline{V}(\W)$, there exist $\overline{w} \in \overline{V}(\W)$ and $C > 0$ such that
			\[ w(x + y) \leq C \overline{w}(x) w^{\nu}(y) , \qquad \forall x, y \in \R^{d} . \]
	\end{lemma}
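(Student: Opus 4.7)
The plan is to take the natural translation-sup candidate
\[ \overline{w}_{0}(x) := \sup_{y \in \R^{d}} \frac{w(x+y)}{w^{\nu}(y)}, \]
which automatically gives the inequality $w(x+y) \leq \overline{w}_{0}(x) w^{\nu}(y)$ for all $x, y \in \R^{d}$ with constant $C = 1$, and then to replace $\overline{w}_{0}$ by its upper semicontinuous envelope to produce an element of $\overline{V}(\W)$.

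The first step is to establish the quantitative domination $\overline{w}_{0} \leq K_{\mu} w^{\mu}$ for every $\mu \in \R_{+}$ (which in particular shows that $\overline{w}_{0}$ is finite). I would fix $\mu \in \R_{+}$ and apply $\{\condM\}$ to the pair $(\mu, \nu)$ to obtain $\lambda \in \R_{+}$ and $C_{1} > 0$ such that $w^{\lambda}(x+y) \leq C_{1} w^{\mu}(x) w^{\nu}(y)$ for all $x, y \in \R^{d}$. Since $w \in \overline{V}(\W)$, there is $C_{2} > 0$ with $w \leq C_{2} w^{\lambda}$ pointwise. Combining both bounds gives $w(x+y) \leq C_{1} C_{2} w^{\mu}(x) w^{\nu}(y)$; dividing by $w^{\nu}(y)$ and taking the supremum in $y$ yields $\overline{w}_{0}(x) \leq K_{\mu} w^{\mu}(x)$ with $K_{\mu} = C_{1} C_{2}$.

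The main obstacle is regularity: $\overline{w}_{0}$ is a pointwise supremum of a family of upper semicontinuous functions of $x$ and is therefore in general only lower semicontinuous, so it need not itself lie in $\overline{V}(\W)$. To circumvent this, I would pass to the upper semicontinuous envelope
\[ \overline{w}(x) := \inf_{\delta > 0} \sup_{|z - x| < \delta} \overline{w}_{0}(z), \]
which is u.s.c. by construction and dominates $\overline{w}_{0}$ pointwise. Because every $w^{\mu}$ is continuous, the bound $\overline{w}_{0} \leq K_{\mu} w^{\mu}$ transfers to the envelope (the u.s.c. envelope is the smallest u.s.c. function $\geq \overline{w}_{0}$, and $K_{\mu} w^{\mu}$ is one such). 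Hence $\sup_{x} \overline{w}(x) / w^{\mu}(x) < \infty$ for all $\mu \in \R_{+}$, placing $\overline{w}$ in $\overline{V}(\W)$. Since $\overline{w} \geq \overline{w}_{0}$, the inequality $w(x+y) \leq \overline{w}(x) w^{\nu}(y)$ holds with $C = 1$, completing the argument.
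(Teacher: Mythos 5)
Your proof is correct, but the construction differs from the paper's. You both start from the same quantitative input: for each $\mu \in \R_{+}$, condition $\{\condM\}$ applied to the pair $(\mu,\nu)$ gives $\lambda_{\mu}$ and $C_{\mu}$ with $w^{\lambda_{\mu}}(x+y) \leq C_{\mu} w^{\mu}(x) w^{\nu}(y)$, and $w \in \overline{V}(\W)$ gives $w \leq C'_{\lambda_{\mu}} w^{\lambda_{\mu}}$, so that $w(x+y) \leq C_{\mu} C'_{\lambda_{\mu}} w^{\mu}(x) w^{\nu}(y)$. The paper then simply sets $\overline{w}(x) = \inf_{\mu \in \R_{+}} C_{\mu} C'_{\lambda_{\mu}} w^{\mu}(x)$: as an infimum of continuous functions this is automatically upper semicontinuous, it is dominated by each $C_{\mu} C'_{\lambda_{\mu}} w^{\mu}$, hence lies in $\overline{V}(\W)$, and the desired inequality with $C=1$ is immediate. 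You instead take the minimal candidate $\overline{w}_{0}(x) = \sup_{y} w(x+y)/w^{\nu}(y)$, prove the same bounds $\overline{w}_{0} \leq K_{\mu} w^{\mu}$, and then repair the lack of upper semicontinuity by passing to the u.s.c.\ envelope, correctly noting that the envelope is the smallest u.s.c.\ majorant and hence still dominated by each continuous $K_{\mu} w^{\mu}$. Your route buys the canonical (pointwise smallest possible) u.s.c.\ weight at the cost of the regularization step; the paper's infimum construction sidesteps the semicontinuity issue entirely and is shorter. One small slip in your write-up: a supremum of upper semicontinuous functions is not ``in general lower semicontinuous'' (that is the behaviour of suprema of \emph{lower} semicontinuous or continuous functions); the correct statement, which is all you need, is that such a supremum need not be upper semicontinuous, so $\overline{w}_{0}$ may fail to belong to $\overline{V}(\W)$ without the envelope step.
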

	
	\begin{proof}
		Fix $\nu \in \R_{+}$. Using condition $\{\condM\}$, for any $\mu \in \R_{+}$ there are a $\lambda_{\mu} \in \R_{+}$ and $C_{\mu} > 0$ such that $w^{\lambda_{\mu}}(x + y) \leq C_{\mu} w^{\mu}(x) w^{\nu}(y)$ for all $x, y \in \R^{d}$. For any $\lambda \in \R_{+}$ we write $C'_{\lambda} = \sup_{x \in \R^{d}} w(x) / w^{\lambda}(x)$. Put $\overline{w}(x) = \inf_{\mu \in \R_{+}} C_{\mu} C'_{\lambda_{\mu}} w^{\mu}(x)$. Then, $\overline{w} \in \overline{V}(\W)$ and 
			\[ w(x + y) \leq \inf_{\mu \in \R_{+}} C'_{\lambda_{\mu}} w^{\lambda_{\mu}}(x + y) \leq \inf_{\mu \in \R_{+}} C_{\mu} C'_{\lambda_{\mu}} w^{\mu}(x) w^{\nu}(y) = \overline{w}(x) w^{\nu}(y) . \]
	\end{proof}

	\begin{lemma}
		\label{l:NachbinSq}
		Let $\W$ be a weight function system satisfying $\{\condSq\}$. Then, for any $w \in \overline{V}(\W)$, also $w^{2} \in \overline{V}(\W)$.
	\end{lemma}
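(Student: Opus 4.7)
The statement is a direct consequence of the definitions, so the proof will be short. The plan is to check upper semicontinuity of $w^2$ and then verify the growth bound $\sup_{x} w^2(x)/w^\nu(x) < \infty$ for each $\nu \in \R_+$ by combining $\{\condSq\}$ with the membership $w \in \overline{V}(\W)$.

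First I would observe that since $w$ is a non-negative upper semicontinuous function on $\R^d$ and $t \mapsto t^2$ is continuous and monotone increasing on $[0, \infty)$, the composition $w^2$ is again non-negative and upper semicontinuous. It therefore only remains to check the defining integrability/growth condition of $\overline{V}(\W)$.

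Now fix an arbitrary $\nu \in \R_+$. By the assumption $\{\condSq\}$ on $\W$, there exist $\lambda, \mu \in \R_+$ and $C > 0$ such that
\[ w^\lambda(x) \, w^\mu(x) \leq C \, w^\nu(x), \qquad \forall x \in \R^d. \]
Since $w \in \overline{V}(\W)$, we have constants $C_1, C_2 > 0$ with $w(x) \leq C_1 w^\lambda(x)$ and $w(x) \leq C_2 w^\mu(x)$ for all $x \in \R^d$. Multiplying these two pointwise estimates and inserting the previous inequality yields
\[ w^2(x) = w(x) \cdot w(x) \leq C_1 C_2 \, w^\lambda(x) \, w^\mu(x) \leq C_1 C_2 C \, w^\nu(x), \qquad \forall x \in \R^d, \]
so that $\sup_{x \in \R^d} w^2(x)/w^\nu(x) < \infty$. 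As $\nu$ was arbitrary, this shows $w^2 \in \overline{V}(\W)$.

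There is no real obstacle here; the lemma is essentially a tautology once $\{\condSq\}$ is unpacked. The only mild point to mention is that one must invoke $\{\condSq\}$ to pass from the product $w^\lambda w^\mu$ back into the system $\W$ (which is why the hypothesis $\{\condSq\}$ is needed rather than just $\{\condM\}$), and that $w^2$ is automatically upper semicontinuous, ensuring it lies in the correct ambient class of functions.
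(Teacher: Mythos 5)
Your proof is correct and follows essentially the same route as the paper: combine $\{\condSq\}$ with the bounds $\sup_x w/w^\lambda<\infty$ coming from $w\in\overline{V}(\W)$ to control $w^2/w^\nu$. The only cosmetic difference is that the paper first uses the monotonicity of the system to reduce $\{\condSq\}$ to the form $(w^{\mu})^{2}\leq C w^{\lambda}$, whereas you work directly with the two indices $\lambda,\mu$ (and you also record the routine upper semicontinuity of $w^{2}$, which the paper leaves implicit).
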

	
	\begin{proof}
		By $\{\condSq\}$, for any $\lambda \in \R_{+}$ there is a $\mu \in \R_{+}$ such that $(w^{\mu}(x))^{2} \leq C w^{\lambda}(x)$ for any $x \in \R^{d}$ and some $C > 0$. Then,
			\[ \sup_{x \in \R^{d}} \frac{w(x)^{2}}{w^{\lambda}(x)} \leq C \left(\sup_{x \in \R^{d}} \frac{w(x)}{w^{\mu}(x)}\right)^{2} < \infty . \]
	\end{proof}
	
	\begin{lemma}
		\label{l:NachbinN}
		Let $\W$ be a weight function system satisfying $\{\condN\}$. Then, for any $w \in \overline{V}(\W)$, there exists a $\overline{w} \in \overline{V}(\W)$ such that $w / \overline{w} \in L^{1}(\R^{d})$.
	\end{lemma}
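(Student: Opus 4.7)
The plan is to combine the hypothesis $w \in \overline{V}(\W)$ with the condition $\{\condN\}$ in order to first derive a strong integrability property of $w$ itself, and then to build $\overline{w}$ via a diagonal infimum construction in the spirit of Lemma \ref{l:NachbinModerate}.

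The crucial preliminary step I would carry out is showing that $w/w^{n} \in L^{1}(\R^{d})$ for every integer $n \geq 1$. Applying $\{\condN\}$ with $\mu = n$ produces some $\lambda_{n} \in \R_{+}$ such that $w^{\lambda_{n}}/w^{n} \in L^{1}(\R^{d})$, while $w \in \overline{V}(\W)$ gives $\tilde{C}_{n} := \sup_{x \in \R^{d}} w(x)/w^{\lambda_{n}}(x) < \infty$, so that $w/w^{n} \leq \tilde{C}_{n} \, w^{\lambda_{n}}/w^{n}$ pointwise. Consequently $M_{n} := \|w/w^{n}\|_{L^{1}(\R^{d})}$ is a finite nonnegative number for every $n$.

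With this in hand, I would set
\[ \overline{w}(x) := \inf_{n \geq 1} a_{n} w^{n}(x), \qquad a_{n} := 2^{n}(M_{n} + 1), \]
and verify the two required properties. Being an infimum of continuous functions, $\overline{w}$ is upper semicontinuous, and since $w^{n}(x) \geq 1$ we have $\overline{w}(x) \geq \inf_{n} a_{n} \geq 2 > 0$. Membership $\overline{w} \in \overline{V}(\W)$ is immediate: for any $\nu \in \R_{+}$, choosing any integer $n \geq \nu$ yields $w^{n} \leq w^{\nu}$, hence $\overline{w}(x)/w^{\nu}(x) \leq a_{n} w^{n}(x)/w^{\nu}(x) \leq a_{n}$ uniformly in $x$. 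Integrability of $w/\overline{w}$ follows from the elementary bound $\sup_{n} f_{n} \leq \sum_{n} f_{n}$ for nonnegative $f_{n}$ combined with Tonelli's theorem:
\[ \int_{\R^{d}} \frac{w(x)}{\overline{w}(x)} dx = \int_{\R^{d}} \sup_{n \geq 1} \frac{w(x)}{a_{n} w^{n}(x)} dx \leq \sum_{n \geq 1} \frac{1}{a_{n}} \int_{\R^{d}} \frac{w(x)}{w^{n}(x)} dx = \sum_{n \geq 1} \frac{M_{n}}{a_{n}} \leq \sum_{n \geq 1} 2^{-n} < \infty . \]

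The only nontrivial step is the preliminary observation that $\{\condN\}$ together with the Nachbin condition already forces $w/w^{n} \in L^{1}$ for every $n$; once this is recognized, the diagonal infimum construction with summably-scaled coefficients is essentially forced, and all remaining verifications are routine.
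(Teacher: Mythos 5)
Your proof is correct and follows essentially the same route as the paper: both first reduce to the integrability of $w/w^{n}$ via $\{\condN\}$ and the defining bound $\sup_x w/w^{\lambda_n}<\infty$, and then take $\overline{w}$ as a weighted infimum $\inf_{n\geq 1} 2^{n}(\text{const}_n)\,w^{n}$ so that the $L^{1}$-norms are summable. The only difference is cosmetic (you name the norms $M_n$ explicitly and spell out the upper semicontinuity and Nachbin-membership checks, which the paper leaves implicit).
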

	
	\begin{proof}
		Using $\{\condN\}$, for any $\mu \in \R_{+}$ there exists a $\lambda_{\mu} \in \R_{+}$ such that $w^{\lambda_{\mu}} / w^{\mu} \in L^{1}(\R^{d})$ and we set $C_{\mu} = \|w^{\lambda_{\mu}} / w^{\mu}\|_{L^{1}}$. Also, for any $\lambda \in \R_{+}$ we write $C'_{\lambda} = \sup_{x \in \R^{d}} w(x) / w^{\lambda}(x)$. If we consider the function 
			\[ \overline{w}(x) = \inf_{n \geq 1} 2^{n} C_{n} C'_{\lambda_{n}} w^{n}(x) , \] 
		then $\overline{w}(x) \in \overline{V}(\W)$ and
			\begin{align*} 
				\int_{\R^{d}} \frac{w(x)}{\overline{w}(x)} dx 
					&
					\leq \int_{\R^{d}} \sup_{n \geq 1} \frac{w(x)}{2^{n} C_{n} C'_{\lambda_{n}} w^{n}(x)} dx \\
					&
					\leq \sum_{n \geq 1} \frac{1}{2^{n}} \int_{\R^{d}} (C'_{\lambda_{n}})^{-1} \frac{w(x)}{w^{\lambda_{n}}(x)} C^{-1}_{n} \frac{w^{\lambda_{n}}(x)}{w^{n}(x)} dx 
					\leq 1 .  
			\end{align*}
	\end{proof}
	
We denote by $\overline{\S}^{\{\V\}}_{\{\W\}, 1}$ the space of all $\varphi \in \S^{\prime}(\R^{d})$ such that $\|\varphi\|_{\S^{v}_{w, 1}} < \infty$ for all $v \in \overline{V}(\V)$ and $w \in \overline{V}(\W)$, and we endow it with the locally convex topology generated by the system of seminorms $\{ \|\cdot\|_{\S^{v}_{w, 1}} : v \in \overline{V}(\V), w \in \overline{V}(\W) \}$. Note that as the constant function $1$ is an element of both $\overline{V}(\V)$ and $\overline{V}(\W)$, it follows that the elements of $\overline{\S}^{\{\V\}}_{\{\W\}, 1}$ are all continuous functions. We now find the following STFT characterization.
	
	\begin{proposition}
		\label{p:STFTBBSpProjDescr}
		Let $\V$ and $\W$ be weight function systems satisfying $\{\condM\}$, $\{\condSq\}$, and $\{\condN\}$. For any $\psi \in \S^{\{\V\}}_{\{\W\}}$, the linear mappings
			\[ V_{\check{\psi}} : \overline{\S}^{\{\V\}}_{\{\W\}, 1} \rightarrow C\overline{V}(\W \otimes \V)(\R^{2d}) \quad \text{and} \quad V^{*}_{\psi} : C\overline{V}(\W \otimes \V)(\R^{2d}) \rightarrow \overline{\S}^{\{\V\}}_{\{\W\}}  \]
		are well-defined and continuous. Moreover, if $\gamma \in \S^{\{\V\}}_{\{\W\}}$ is a synthesis window for $\check{\psi}$, then,
			\begin{equation}
				\label{eq:STFTReconstructBBSpProjDescr}
				\frac{1}{(\gamma, \check{\psi})_{L^{2}}} V^{*}_{\gamma} \circ V_{\check{\psi}} = \id_{\overline{\S}^{\{\V\}}_{\{\W\}, 1}} . 
			\end{equation}
	\end{proposition}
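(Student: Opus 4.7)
The strategy is to follow the proof of Proposition \ref{p:STFTBBSp} line by line, but to replace each appeal to the conditions $[\condM]$, $[\condSq]$, $[\condN]$ on $\V$ and $\W$ with its Nachbin-family counterpart supplied by Lemmas \ref{l:NachbinModerate}, \ref{l:NachbinSq}, and \ref{l:NachbinN}, and to exploit the upward density of $\overline{V}(\W) \otimes \overline{V}(\V)$ in $\overline{V}(\W \otimes \V)$ recorded at the end of Section \ref{sec:WeighFuncSystem}. In this way the two continuity statements again reduce to Lemmas \ref{l:STFTSpecific} and \ref{l:AdjointSTFTSpecific}, but with the auxiliary weights $w_{0}, v_{0}, w_{1}, v_{1}, w_{2}, v_{2}$ chosen inside the Nachbin families rather than inside $\W, \V$.

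For the continuity of $V_{\check{\psi}}$, I fix a defining seminorm $\|\cdot\|_{W}$ on $C\overline{V}(\W \otimes \V)$; by the upward density, I may reduce to $W = w \otimes v$ with $w \in \overline{V}(\W)$ and $v \in \overline{V}(\V)$. Set $w_{0} = w$, $v_{0} = v$ and $w_{1} = w^{2}$, $v_{1} = v^{2}$, which again lie in the Nachbin families by Lemma \ref{l:NachbinSq}, giving \eqref{eq:STFTSpecificCondSq} with $C_{0} = 1$. Since $\psi \in \S^{\{\V\}}_{\{\W\}}$, pick $\nu \in \R_{+}$ with $\psi \in \S^{v^{\nu}}_{w^{\nu}}$ and set $w_{3} = w^{\nu}$, $v_{3} = v^{\nu}$. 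Applying Lemma \ref{l:NachbinModerate} with exponent $\nu$ to $w_{1}$ and to $v_{1}$ produces $w_{2} \in \overline{V}(\W)$ and $v_{2} \in \overline{V}(\V)$ fulfilling \eqref{eq:STFTSpecificCondM}; Lemma \ref{l:STFTSpecific} now yields continuity of $V_{\check{\psi}}\colon \S^{v_{2}}_{w_{2}, 1} \to C_{w \otimes v}(\R^{2d})$, and $\|\cdot\|_{\S^{v_{2}}_{w_{2}, 1}}$ is a defining seminorm on $\overline{\S}^{\{\V\}}_{\{\W\}, 1}$.

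For the continuity of $V^{*}_{\psi}$, I fix a defining seminorm $\|\cdot\|_{\S^{v_{1}}_{w_{1}}}$ on $\overline{\S}^{\{\V\}}_{\{\W\}}$, pick $\nu$ with $\psi \in \S^{v^{\nu}}_{w^{\nu}}$ and set $w_{3} = w^{\nu}$, $v_{3} = v^{\nu}$, use Lemma \ref{l:NachbinModerate} to obtain $w_{2}, v_{2}$ satisfying \eqref{eq:STFTSpecificCondM}, and replace them by $w_{2} + 1$, $v_{2} + 1$ (still in the Nachbin families since the constant $1$ belongs to both) to secure $\inf w_{2}, \inf v_{2} \geq 1$. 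Finally Lemma \ref{l:NachbinN} produces $w_{0}, v_{0}$ in the Nachbin families with $w_{2}/w_{0}, v_{2}/v_{0} \in L^{1}$, and Lemma \ref{l:AdjointSTFTSpecific} delivers continuity of $V^{*}_{\psi}\colon C_{w_{0} \otimes v_{0}}(\R^{2d}) \to \S^{v_{1}}_{w_{1}}$. The reconstruction formula \eqref{eq:STFTReconstructBBSpProjDescr} is obtained by the same Fubini computation as in Proposition \ref{p:STFTBBSp}, once one knows that for fixed $x$ both $\varphi(\cdot)\overline{\psi}(x - \cdot)$ and $V_{\check{\psi}}\varphi(x, \cdot)$ lie in $L^{1}(\R^{d})$; the first because $\varphi \in \overline{\S}^{\{\V\}}_{\{\W\}, 1} \subseteq L^{1}(\R^{d})$ (the constant $1$ is a Nachbin weight) and $\psi$ is bounded, the second because $V_{\check{\psi}}\varphi \in C\overline{V}(\W \otimes \V)$ combined with Lemma \ref{l:NachbinN} applied to the constant weight $1 \in \overline{V}(\V)$ yields $v \in \overline{V}(\V)$ with $1/v \in L^{1}$. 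The only real obstacle is orchestrating the Nachbin weight choices so that every hypothesis of Lemmas \ref{l:STFTSpecific} and \ref{l:AdjointSTFTSpecific} is met simultaneously; once this is done the remaining computations are word-for-word those in the Beurling case.
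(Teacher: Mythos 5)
Your proposal is correct and follows essentially the same route as the paper: the paper's own (very compressed) proof likewise reduces the two continuity statements to Lemmas \ref{l:STFTSpecific} and \ref{l:AdjointSTFTSpecific} via Lemmas \ref{l:NachbinModerate}, \ref{l:NachbinSq}, \ref{l:NachbinN}, the upward density of $\overline{V}(\W) \otimes \overline{V}(\V)$ in $\overline{V}(\W \otimes \V)$, and the observation that one may bump Nachbin weights up to be bounded below by $1$ (the paper uses $\max\{1,w\}$ where you use $w+1$), and it proves \eqref{eq:STFTReconstructBBSpProjDescr} by the same Fubini computation as \eqref{eq:STFTReconstructBBSp}. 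Your write-up merely makes explicit the orchestration of weights that the paper leaves implicit.
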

	
	\begin{proof}
		Since for any $w \in \overline{V}(\W)$ also $\max\{1, w\} \in \overline{V}(\W)$ (similarly for $\overline{V}(\V)$) and as $\overline{V}(\W) \otimes \overline{V}(\V)$ is upward dense in $\overline{V}(\W \otimes \V)$, the continuity of the mappings follows by combining Lemmas \ref{l:STFTSpecific} and \ref{l:AdjointSTFTSpecific} with Lemmas \ref{l:NachbinModerate}, \ref{l:NachbinSq}, and \ref{l:NachbinN}, while \eqref{eq:STFTReconstructBBSpProjDescr} can be shown analogously as \eqref{eq:STFTReconstructBBSp}.  
	\end{proof}
	
We are ready to establish the main result of this subsection.

	\begin{theorem}
		\label{t:ProjDescr}
		Let $\V$ and $\W$ be weight function systems satisfying $\{\condM\}$, $\{\condSq\}$, and $\{\condN\}$. Then, $\S^{\{\V\}}_{\{\W\}}$ and $\overline{S}^{\{\V\}}_{\{\W\}}$ coincide topologically.
	\end{theorem}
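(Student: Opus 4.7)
By Lemma \ref{l:ProjectiveDescriptionContinuousEmbedding} the two spaces are algebraically equal and the identity $\S^{\{\V\}}_{\{\W\}} \to \overline{\S}^{\{\V\}}_{\{\W\}}$ is continuous, so my plan is to establish continuity of the reverse inclusion $\overline{\S}^{\{\V\}}_{\{\W\}} \to \S^{\{\V\}}_{\{\W\}}$. The guiding idea mirrors the proof of Proposition \ref{p:BBSpInftyandL1}: factor the identity through the STFT, but now pass through the projective description of the continuous-function space and use Lemma \ref{l:ProjDescrContinuousFunctions} to identify it with the inductive-limit version, at which point Proposition \ref{p:STFTBBSp} lands us back in $\S^{\{\V\}}_{\{\W\}}$.

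As a preliminary step I would verify the analog of Lemma \ref{l:BBSpInftyInBBSpL1} in the projective setting, namely that $\overline{\S}^{\{\V\}}_{\{\W\}} \subseteq \overline{\S}^{\{\V\}}_{\{\W\}, 1}$ continuously. Given $v \in \overline{V}(\V)$ and $w \in \overline{V}(\W)$, Lemma \ref{l:NachbinN} supplies $\overline{v} \in \overline{V}(\V)$ and $\overline{w} \in \overline{V}(\W)$ with $v/\overline{v}, w/\overline{w} \in L^{1}(\R^{d})$, and the routine estimate
\[
\|\varphi\|_{\S^{v}_{w,1}} \leq \|w/\overline{w}\|_{L^{1}} \|\varphi\|_{\overline{w}} + \|v/\overline{v}\|_{L^{1}} \|\widehat{\varphi}\|_{\overline{v}}
\]
does the job. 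The trivial case $\S^{\{\V\}}_{\{\W\}} = \{0\}$ forces $\overline{\S}^{\{\V\}}_{\{\W\}} = \{0\}$ by Lemma \ref{l:ProjectiveDescriptionContinuousEmbedding}, so I may assume otherwise and, exactly as in the proof of Proposition \ref{p:BBSpInftyandL1}, select a non-zero $\psi \in \S^{\{\V\}}_{\{\W\}}$ together with a translate $\gamma \in \S^{\{\V\}}_{\{\W\}}$ of $\psi$ that is a synthesis window for $\check{\psi}$, normalized so that $(\gamma,\check{\psi})_{L^{2}} = 1$.

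With these preparations in place I would assemble the following chain of continuous maps:
\[
\overline{\S}^{\{\V\}}_{\{\W\}} \hookrightarrow \overline{\S}^{\{\V\}}_{\{\W\}, 1} \xrightarrow{V_{\check{\psi}}} C\overline{V}(\W \otimes \V)(\R^{2d}) = C_{\{\W \otimes \V\}}(\R^{2d}) \xrightarrow{V^{*}_{\gamma}} \S^{\{\V\}}_{\{\W\}}.
\]
The first arrow is continuous by the preliminary step; the second by Proposition \ref{p:STFTBBSpProjDescr}; the middle equality is topological by Lemma \ref{l:ProjDescrContinuousFunctions}, which applies because $\W \otimes \V$ satisfies $\{\condM\}$ and $\{\condN\}$ (the tensor product inherits these conditions from $\W$ and $\V$ as noted in Section \ref{sec:WeighFuncSystem}) and hence $(S)$ by Lemma \ref{l:CondNinC0}; the last arrow is continuous by Proposition \ref{p:STFTBBSp}. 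Finally, the reconstruction identity \eqref{eq:STFTReconstructBBSpProjDescr} shows that this composition coincides with the inclusion $\overline{\S}^{\{\V\}}_{\{\W\}} \to \S^{\{\V\}}_{\{\W\}}$, yielding the missing continuity.

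The only delicate point, and what I expect to be the main conceptual obstacle, is making sure that the topological identification of the inductive-limit and projective-description spaces occurs at the level of $\R^{2d}$ for the tensor-product weight system $\W \otimes \V$ (not just for $\W$ and $\V$ separately), which is what allows one to reconnect the two sides of the diagram; all other steps are immediate consequences of the machinery already established.
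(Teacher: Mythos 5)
Your proposal is correct and follows essentially the same route as the paper: factor the inclusion $\overline{\S}^{\{\V\}}_{\{\W\}} \hookrightarrow \overline{\S}^{\{\V\}}_{\{\W\},1} \xrightarrow{V_{\check{\psi}}} C\overline{V}(\W\otimes\V)(\R^{2d}) = C_{\{\W\otimes\V\}}(\R^{2d}) \xrightarrow{V^{*}_{\gamma}} \S^{\{\V\}}_{\{\W\}}$ using Propositions \ref{p:STFTBBSp} and \ref{p:STFTBBSpProjDescr}, Lemma \ref{l:ProjDescrContinuousFunctions} for the tensor weight system, and the reconstruction identity \eqref{eq:STFTReconstructBBSpProjDescr}. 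Your explicit estimate for the first inclusion and the handling of the trivial case are minor elaborations of steps the paper leaves implicit.
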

	
	\begin{proof}
		By Lemma \ref{l:NachbinN} it follows that $\overline{\S}^{\{\V\}}_{\{\W\}} \subseteq \overline{\S}^{\{\V\}}_{\{\W\}, 1}$. Hence, by Lemma \ref{l:ProjectiveDescriptionContinuousEmbedding} it suffices to show that $\overline{\S}^{\{\V\}}_{\{\W\}, 1} \subseteq \S^{\{\V\}}_{\{\W\}}$ continuously. Now, if we take $\gamma, \psi \in \S^{\{\V\}}_{\{\W\}}$ such that $(\gamma, \check{\psi})_{L^{2}} = 1$ as in the proof of Proposition  \ref{p:BBSpInftyandL1}, it follows from Propositions \ref{p:STFTBBSp} and \ref{p:STFTBBSpProjDescr} and Lemma \ref{l:ProjDescrContinuousFunctions} that the following diagram commutes 
			\[ 
				\begin{tikzcd}[ampersand replacement=\&]
					\overline{\S}^{\{\V\}}_{\{\W\}, 1} \arrow{r}{V_{\check{\psi}}} \arrow{d}{\iota} \& C\overline{V}(\W \otimes \V)(\R^{2d}) = C_{\{\W \otimes \V\}}(\R^{2d}) \arrow{dl}{V^{*}_{\gamma}} \\
					\S^{\{\V\}}_{\{\W\}} \& ~ 
				\end{tikzcd} 
			\]
		which shows the claim.
	\end{proof}
	
\subsection{The proof of Theorem \ref{t:KernelTheorems}}
\label{subsect: proof kernels}
We now move on to the proof of the kernel theorem. Given two weight function systems $\V$ on $\R^{d_{1}}$ and $\W$ on $\R^{d_{2}}$, we may consider the canonical embedding $C_{[\V]}(\R^{d_{1}}) \otimes C_{[\W]}(\R^{d_{2}}) \rightarrow C_{[\V \otimes \W]}(\R^{d_{1} + d_{2}})$ given by
	\begin{equation}
		\label{eq:CanonicalEmbeddingTensProd}
		\sum_{k = 1}^{n} \Phi_{1, k} \otimes \Phi_{2, k} \longmapsto \left[ (x_{1}, x_{2}) \mapsto \sum_{k = 1}^{n} \Phi_{1, k}(x_{1}) \Phi_{2, k}(x_{2}) \right] .
	\end{equation}
Then clearly we may extend this embedding (where in the Roumieu case we use \cite[Theorem 3.7]{B-M-S-ProjDescrWeighIndLim} and that the weight function systems satisfy (S) in view of Lemma \ref{l:CondNinC0}) to the canonical isomorphism
	\[ C_{[\V]}(\R^{d_{1}}) \widehat{\otimes}_{\varepsilon} C_{[\W]}(\R^{d_{2}}) \cong C_{[\V \otimes \W]}(\R^{d_{1} + d_{2}}) . \]

We are now fully prepared to show the kernel theorem.

	\begin{proof}[Proof of Theorem \ref{t:KernelTheorems}]
		It suffices to show the isomorphisms in \eqref{eq:KernelTheoremBBSp}, as the isomorphisms in \eqref{eq:KernelTheoremDual} would then follow from those by the general theory of nuclear Fr\'{e}chet and $(DF)$-spaces, see e.g. \cite[Theorem 2.2]{K-UltraDist3}. Moreover, the canonical isomorphism $\S^{[\V_{1}]}_{[\W_{1}]}(\R^{d_{1}}) \compltens \S^{[\V_{2}]}_{[\W_{2}]}(\R^{d_{2}}) \cong \mathcal{L}_{\beta}(\S^{[\V_{1}]}_{[\W_{1}]}(\R^{d_{1}})^{\prime}, \S^{[\V_{2}]}_{[\W_{2}]}(\R^{d_{2}}))$ follows from the fact that both $\S^{[\V_{1}]}_{[\W_{1}]}(\R^{d_{1}})$ and $\S^{[\V_{2}]}_{[\W_{2}]}(\R^{d_{2}})$ are nuclear and complete, see e.g. \cite[Proposition 50.5]{T-TopVecSp}. Hence, we are left with showing $\S^{[\V_{1} \otimes \V_{2}]}_{[\W_{1} \otimes \W_{2}]}(\R^{d_{1} + d_{2}}) \cong \S^{[\V_{1}]}_{[\W_{1}]}(\R^{d_{1}}) \compltens \S^{[\V_{2}]}_{[\W_{2}]}(\R^{d_{2}})$.
	
		We consider the canonical embedding $\iota : \S^{[\V_{1}]}_{[\W_{1}]}(\R^{d_{1}}) \otimes \S^{[\V_{2}]}_{[\W_{2}]}(\R^{d_{2}}) \rightarrow \S^{[\V_{1} \otimes \V_{2}]}_{[\W_{1} \otimes \W_{2}]}(\R^{d_{1} + d_{2}})$ defined by
			\[ \sum_{k = 1}^{n} \varphi_{k} \otimes \psi_{k} \longmapsto \left[ (x_{1}, x_ {2}) \mapsto \sum_{k = 1}^{n} \varphi_{k}(x_{1}) \psi_{k}(x_{2}) \right] . \]
		We first show that $\iota$ is well-defined and moreover that $\iota : \S^{[\V_{1}]}_{[\W_{1}]}(\R^{d_{1}}) \otimes_{\pi} \S^{[\V_{2}]}_{[\W_{2}]}(\R^{d_{2}}) \rightarrow \S^{[\V_{1} \otimes \V_{2}]}_{[\W_{1} \otimes \W_{2}]}(\R^{d_{1} + d_{2}})$ is continuous. Suppose $f = \sum_{k = 1}^{n} \varphi_{k} \otimes \psi_{k}$ for certain $\varphi_{k} \in \S^{[\V_{1}]}_{[\W_{1}]}(\R^{d_{1}})$ and $\psi_{k} \in \S^{[\V_{2}]}_{[\W_{2}]}(\R^{d_{2}})$ and suppose $v_{j} = v^{\lambda}_{j}$ and $w_{j} = w^{\lambda}_{j}$ for some $\lambda \in \R_{+}$ ($v_{j} \in \overline{V}(\V)$ and $w_{j} \in \overline{V}(\W)$) for $j = 1, 2$. Then,
			\begin{align*}
				&\sup_{(x_{1}, x_{2}) \in \R^{d_{1} + d_{2}}} \left|\sum_{k = 1}^{n} \varphi_{k}(x_{1}) \psi_{k}(x_{2})\right| w_{1}(x_{1}) w_{2}(x_{2}) \\
					&\qquad \qquad + \sup_{(\xi_{1}, \xi_{2}) \in \R^{d_{1} + d_{2}}} \left|\sum_{k = 1}^{n} \widehat{\varphi}_{k}(\xi_{1}) \widehat{\psi}_{k}(\xi_{2})\right| v_{1}(\xi_{1}) v_{2}(\xi_{2}) 
					\leq 2 \sum_{k = 1}^{n} \|\varphi_{k}\|_{\S^{v_{1}}_{w_{1}}} \|\psi_{k}\|_{\S^{v_{2}}_{w_{2}}} ,
			\end{align*}
		which shows that the embedding $\iota : \S^{[\V_{1}]}_{[\W_{1}]}(\R^{d_{1}}) \otimes_{\pi} \S^{[\V_{2}]}_{[\W_{2}]}(\R^{d_{2}}) \rightarrow \S^{[\V_{1} \otimes \V_{2}]}_{[\W_{1} \otimes \W_{2}]}(\R^{d_{1} + d_{2}})$ is well-defined and continuous. Here in the Roumieu case we used Theorem \ref{t:ProjDescr}. As $\S^{[\V_{1} \otimes \V_{2}]}_{[\W_{1} \otimes \W_{2}]}(\R^{d_{1} + d_{2}})$ is complete, we may uniquely extend $\iota$ to a continuous linear mapping 
			\[ \iota : \S^{[\V_{1}]}_{[\W_{1}]}(\R^{d_{1}}) \widehat{\otimes} \S^{[\V_{2}]}_{[\W_{2}]}(\R^{d_{2}}) \rightarrow \S^{[\V_{1} \otimes \V_{2}]}_{[\W_{1} \otimes \W_{2}]}(\R^{d_{1} + d_{2}}) . \] 
		We now show that $\iota$ is a topological isomorphism. To this end, we take windows $\psi_{j}, \gamma_{j} \in \S^{[\V_{j}]}_{[\W_{j}]}(\R^{d_{j}})$, $j = 1, 2$, such that $(\gamma_{j}, \check{\psi}_{j})_{L^{2}} = 1$, and note that due to our prior observations we have that $\psi_{1} \otimes \psi_{2}$ and $\gamma_{1} \otimes \gamma_{2}$ are elements of $\S^{[\V_{1} \otimes \V_{2}]}_{[\W_{1} \otimes \W_{2}]}(\R^{d_{1} + d_{2}})$. We then consider the following diagram
			\[
				\begin{tikzcd}[column sep=huge, row sep=large, ampersand replacement=\&]
					\S^{[\V_{1}]}_{[\W_{1}]}(\R^{d_{1}}) \widehat{\otimes} \S^{[\V_{2}]}_{[\W_{2}]}(\R^{d_{2}}) \arrow{r}{V_{\check{\psi}_{1}} \widehat{\otimes}_{\varepsilon} V_{\check{\psi}_{2}}} \arrow{d}{\iota} \& C_{[\W_{1} \otimes \V_{1}]}(\R^{2d_{1}}) \widehat{\otimes}_{\varepsilon} C_{[\W_{2} \otimes \V_{2}]}(\R^{2d_{2}}) \arrow{d}{\iota^{\prime}} \arrow[l, bend right, swap, "V^{*}_{\gamma_{1}} \widehat{\otimes}_{\varepsilon} V^{*}_{\gamma_{2}}"]  \\
					\S^{[\V_{1} \otimes \V_{2}]}_{[\W_{1} \otimes \W_{2}]}(\R^{d_{1} + d_{2}}) \arrow{r}{V_{\check{\psi}_{1} \otimes \check{\psi}_{2}}} \& C_{[(\W_{1} \otimes \V_{1}) \otimes (\W_{2} \otimes \V_{2})]}(\R^{2d_{1} + 2d_{2}}) \arrow[l, bend left, "V^{*}_{\gamma_{1} \otimes \gamma_{2}}"]
				\end{tikzcd}
			\]
		where $\iota^{\prime}$ is the canonical isomorphism generated by \eqref{eq:CanonicalEmbeddingTensProd} and we observe that all mappings are continuous by Proposition \ref{p:STFTBBSp}. Consider the continuous linear mapping $A = (V^{*}_{\gamma_{1}} \widehat{\otimes}_{\varepsilon} V^{*}_{\gamma_{2}}) \circ (\iota')^{-1} \circ V_{\check{\psi}_{1} \otimes \check{\psi}_{2}}$. It is clear from \eqref{eq:STFTReconstructBBSp} that $A \circ \iota = \id$ on $\S^{[\V_{1}]}_{[\W_{1}]}(\R^{d_{1}}) \otimes \S^{[\V_{2}]}_{[\W_{2}]}(\R^{d_{2}})$, hence by its density also on $\S^{[\V_{1}]}_{[\W_{1}]}(\R^{d_{1}}) \widehat{\otimes} \S^{[\V_{2}]}_{[\W_{2}]}(\R^{d_{2}})$. Next, we claim that $\im \iota$ is dense in $\S^{[\V_{1} \otimes \V_{2}]}_{[\W_{1} \otimes \W_{2}]}(\R^{d_{1} + d_{2}})$. Indeed, this follows immediately from the density of $\iota'(C_{[\W_{1} \otimes \V_{1}]}(\R^{2d_{1}}) \otimes C_{[\W_{2} \otimes \V_{2}]}(\R^{2d_{2}}))$ in $C_{[(\W_{1} \otimes \V_{1}) \otimes (\W_{2} \otimes \V_{2})]}(\R^{2d_{1} + 2d_{2}})$, Proposition \ref{p:STFTBBSp}, and \eqref{eq:STFTReconstructBBSp}. But as $\iota \circ A = \id$ on $\im \iota$, we infer that the identity holds on the whole of $\S^{[\V_{1} \otimes \V_{2}]}_{[\W_{1} \otimes \W_{2}]}(\R^{d_{1} + d_{2}})$. Consequently, $\iota$ and $A$ are each others inverse, which implies that $\iota$ is a topological isomorphism. This completes the proof.
	\end{proof}

\appendix

\section{Completing the proof of Theorem \ref{t:NuclearityBBSp}}
\label{appendix:ProofNecessityNuclearity}

In this appendix, we show the implication $(ii) \Rightarrow (i)$ of Theorem \ref{t:NuclearityBBSp}. To do this, we proceed similarly as in \cite{D-N-V-CharNuclBBSp,D-N-V-NuclGSSpKernThm} and apply a result due to Petzsche \cite{P-NuklUltraDistSatzKern}  using K\"{o}the sequence spaces of echelon and co-echelon type \cite{B-M-S-KotheSetsKotheSeqSp, M-V-IntroFuncAnal}.

For a given sequence $a = (a_{j})_{j \in \Z^{d}}$ of positive numbers, we define the Banach space $l^{q}(\Z^{d}, a) = l^{q}(a)$, $q \in \{1, \infty\}$, of all $c = (c_{j})_{j \in \Z^{d}} \in \C^{\Z^{d}}$ such that
	\[ \|c\|_{l^{1}(a)} = \sum_{j \in \Z^{d}} |c_{j}| a_{j} < \infty , \]
and
	\[ \|c\|_{l^{\infty}(a)} =  \sup_{j \in \Z^{d}} |c_{j}| a_{j} < \infty . \]
A \emph{K\"{o}the set} is a family $A = \{ a^{\lambda} : \lambda \in \R_{+} \}$ of sequences $a^{\lambda}$ of positive numbers such that $a^{\lambda}_{j} \leq a^{\mu}_{j}$ for all $j \in \Z^{d}$ and $\mu \leq \lambda$. We define the associated \emph{K\"{o}the sequence spaces} as
	\[ \lambda^{q}(A) = \varprojlim_{\lambda \rightarrow 0^{+}} l^{q}(a^{\lambda}) , \qquad \lambda^{q}\{A\} = \varinjlim_{\lambda \rightarrow \infty} l^{q}(a^{\lambda}) , \qquad q \in \{ 1, \infty \} . \]
Then $\lambda^{q}(A)$ is a Fr\'{e}chet space, while $\lambda^{q}\{A\}$ is a regular $(LB)$-space, as follows from \cite[p.~80, Corollary 7]{B-IntroLocallyConvexIndLim}. Moreover, $\lambda^{1}[A] \subseteq \lambda^{\infty}[A]$ continuously. The nuclearity of $\lambda^{q}[A]$ can be characterized using the following conditions on the K\"{o}the set A:
	\begin{itemize}
		\item[$(\condN)$] $\forall \lambda \in \R_{+} ~ \exists \mu \in \R_{+} : a^{\lambda} / a^{\mu} \in l^{1}$;
		\item[$\{\condN\}$] $\forall \mu \in \R_{+} ~ \exists \lambda \in \R_{+} : a^{\lambda} / a^{\mu} \in l^{1}$.
	\end{itemize}
	
	\begin{lemma}[{cf. \cite[Proposition 28.16]{M-V-IntroFuncAnal} and \cite[p.~75, Proposition 15]{B-IntroLocallyConvexIndLim}}]
		\label{l:NuclKothe}
		Let $A$ be a K\"{o}the set and $q \in \{1, \infty\}$. Then, $\lambda^{q}[A]$ is nuclear if and only if $A$ satisfies $[\condN]$.
	\end{lemma}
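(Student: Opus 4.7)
The plan is to reduce the lemma to the classical fact that the diagonal operator $D\colon l^q \to l^q$ with diagonal $(d_j)_{j \in \Z^d}$ is nuclear if and only if $(d_j) \in l^1$, for $q \in \{1,\infty\}$. The key observation is that, for $\mu \leq \lambda$ in $\R_+$, the monotonicity $a^\lambda_j \leq a^\mu_j$ yields a continuous inclusion $l^q(a^\mu) \hookrightarrow l^q(a^\lambda)$, and via the isometric isomorphisms $l^q(a) \cong l^q$ given by $(c_j) \mapsto (c_j a_j)$ this inclusion is conjugate to the diagonal operator on $l^q$ with entries $(a^\lambda_j / a^\mu_j)_j$. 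Consequently, this linking map is nuclear if and only if $a^\lambda / a^\mu \in l^1$.

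Next I would invoke the local Banach space description of nuclearity. In the Beurling case, $\lambda^q(A) = \varprojlim_{\lambda \to 0^+} l^q(a^\lambda)$ is a Fr\'echet space, so by \cite[Proposition 28.16]{M-V-IntroFuncAnal} its nuclearity is equivalent to the condition that for every $\lambda \in \R_+$ there exists $\mu \in \R_+$ such that the linking map $l^q(a^\mu) \to l^q(a^\lambda)$ is nuclear. By the first step, this is exactly $(\condN)$. In the Roumieu case, $\lambda^q\{A\}$ is a regular $(LB)$-space, and the corresponding inductive-limit criterion from \cite[p.~75, Proposition 15]{B-IntroLocallyConvexIndLim} asserts nuclearity iff for every $\mu \in \R_+$ there exists $\lambda \in \R_+$ with $l^q(a^\mu) \to l^q(a^\lambda)$ nuclear, which reads verbatim as $\{\condN\}$.

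The diagonal-operator characterization on $l^q$ is part of the classical Grothendieck--Pietsch theory, so the entire argument is essentially bookkeeping and I do not anticipate a genuine obstacle. The only mild point to note is that, although the index set $\R_+$ is uncountable, the spectra defining $\lambda^q[A]$ admit countable cofinal subfamilies, so that the cited Fr\'echet and $(LB)$ criteria apply without modification.
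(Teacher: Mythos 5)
The paper gives no proof of this lemma at all: it is imported from the literature, with \cite[Proposition 28.16]{M-V-IntroFuncAnal} covering the echelon (Beurling) case and \cite[p.~75, Proposition 15]{B-IntroLocallyConvexIndLim} the co-echelon (Roumieu) case. Your sketch therefore has to stand on its own, and in the Fr\'echet case it essentially does: identifying the linking maps $l^{q}(a^{\mu})\hookrightarrow l^{q}(a^{\lambda})$ with diagonal operators on $l^{q}$ and invoking the classical fact that such a diagonal operator is nuclear exactly when its diagonal is summable is the standard Grothendieck--Pietsch argument (indeed \cite[Proposition 28.16]{M-V-IntroFuncAnal} is already the echelon-space statement itself, so the Beurling half could simply be quoted), and your countable-cofinality remark correctly disposes of the uncountable index set. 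One technical caveat: for $q=\infty$ the spaces $l^{\infty}(a^{\lambda})$ are \emph{not} the local Banach spaces of $\lambda^{\infty}[A]$, since the finitely supported sequences are not dense; the canonical linking maps act between the closures of $\lambda^{\infty}[A]$ in these spaces. This is harmless, but only because the explicit nuclear representation $\sum_{j}(a^{\lambda}_{j}/a^{\mu}_{j})\,e_{j}^{\ast}\otimes e_{j}$ has all its vectors in that closure (sufficiency) and because the converse estimate---summability of the diagonal of a nuclear operator with respect to the unit vectors and coordinate functionals---survives the passage to the closed subspace (necessity); a sentence to this effect is needed.

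The more serious point is the Roumieu case. That cofinally nuclear linking maps force nuclearity of $\varinjlim_{\lambda} l^{q}(a^{\lambda})$ is standard, but the converse is not an off-the-shelf ``inductive-limit criterion'': nuclearity of an $(LB)$-space is a condition on its zero neighbourhoods and does not localize to the Banach steps without an argument, and \cite[p.~75, Proposition 15]{B-IntroLocallyConvexIndLim} is not a linking-map statement but, in effect, the finished co-echelon characterization you are trying to prove. So either quote it as such (in which case the detour through linking maps is unnecessary), or supply the missing step; for $q=1$ the usual route is: $\lambda^{1}\{A\}$ is a regular and complete $(LB)$-space, hence if it is nuclear its strong dual is a nuclear Fr\'echet space, and by regularity this dual is the projective limit of the spaces $l^{\infty}(1/a^{\lambda})$, to which the Fr\'echet-case criterion applies and yields precisely $\{\condN\}$; the case $q=\infty$ requires the corresponding co-echelon duality from \cite{B-M-S-KotheSetsKotheSeqSp}. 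As written, the necessity half of the Roumieu case is the one genuine gap in your sketch.
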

	
Given a weight function system $\W$, we may associate to it the K\"{o}the set
	\[ A_{\W} = \{ (w^{\lambda}(j))_{j \in \Z^{d}} : \lambda \in \R_{+} \} . \]
In case $\W$ satisfies $[\condM]$, then the notion of $[\condN]$ is unambiguous, that is, by \cite[Lemma 3.2]{D-N-V-NuclGSSpKernThm}:
	\begin{equation} 
		\label{eq:WeighFuncSystemCondNEquivKotheSet}
		\W \text{ satisfies } [\condN] \quad \Longleftrightarrow \quad A_{\W} \text{ satisfies } [\condN] . 
	\end{equation}
	
We will now make use of the following result in order to complete the proof of Theorem \ref{t:NuclearityBBSp}.

	\begin{lemma}[{\cite[Lemma 5.5]{D-N-V-NuclGSSpKernThm}}]
		\label{l:PetzscheTrick}
		Let $A$ be a K\"{o}the set and $E$ be a lcHs.
			\begin{itemize}
				\item[$(a)$] Suppose that $E$ is nuclear and that there are continuous linear mappings $T : \lambda^{1}(A) \rightarrow E$ and $S : E \rightarrow \lambda^{\infty}(A)$ such that $S \circ T = \iota$, where $\iota : \lambda^{1}(A) \rightarrow \lambda^{\infty}(A)$ denotes the natural embedding. Then, $\lambda^{1}(A)$ is nuclear.
				\item[$(b)$] Suppose that $E^{\prime}_{\beta}$ is nuclear and that there are continuous linear mappings $T : \lambda^{1}\{A\} \rightarrow E$ and $S : E \rightarrow \lambda^{\infty}\{A\}$ such that $S \circ T = \iota$, where $\iota : \lambda^{1}\{A\} \rightarrow \lambda^{\infty}\{A\}$ denotes the natural embedding. Then, $\lambda^{1}\{A\}$ is nuclear.
			\end{itemize}
	\end{lemma}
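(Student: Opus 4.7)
\emph{Proof plan.} Both parts reduce to verifying the K\"othe condition $[\condN]$ for $A$ and then invoking Lemma~\ref{l:NuclKothe}. The core observation, common to both parts, is that under the obvious normalization the natural inclusion $l^{1}(a^{\mu}) \hookrightarrow l^{\infty}(a^{\lambda})$ is the diagonal operator on scalar sequences with multiplier $(a^{\lambda}_{j}/a^{\mu}_{j})_{j}$. Since $l^{1}$ has an unconditional Schauder basis, a bounded linear map $l^{1}\to Y$ into a Banach space $Y$ is nuclear if and only if the image sequence is absolutely summable in $Y$; applied here this means that the above diagonal operator is nuclear if and only if $a^{\lambda}/a^{\mu} \in l^{1}$.

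For part $(a)$, I fix $\lambda \in \R_{+}$ and consider the canonical projection $\pi_{\lambda} : \lambda^{\infty}(A) \to l^{\infty}(a^{\lambda})$. Since $E$ is nuclear and $l^{\infty}(a^{\lambda})$ is a Banach space, the continuous linear map $\pi_{\lambda} \circ S : E \to l^{\infty}(a^{\lambda})$ is automatically nuclear. Consequently $\pi_{\lambda} \circ \iota = (\pi_{\lambda} \circ S) \circ T : \lambda^{1}(A) \to l^{\infty}(a^{\lambda})$ is a nuclear map from a Fr\'{e}chet space to a Banach space, and standard theory then provides a factorization through a single local Banach space: there exist $\mu \in \R_{+}$ and a nuclear operator $\widetilde{\iota} : l^{1}(a^{\mu}) \to l^{\infty}(a^{\lambda})$ that extends $\pi_{\lambda} \circ \iota$. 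Because $\widetilde{\iota}$ agrees with the natural inclusion on the dense subspace of finitely supported sequences, it equals that inclusion, and the core observation yields $a^{\lambda}/a^{\mu} \in l^{1}$, verifying $(\condN)$. Lemma~\ref{l:NuclKothe} then delivers the nuclearity of $\lambda^{1}(A)$.

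For part $(b)$, I would dualize. Since $\lambda^{1}\{A\}$ and $\lambda^{\infty}\{A\}$ are regular $(LB)$-spaces built from Banach spaces, their strong duals are Fr\'{e}chet spaces with canonical identifications of the form $(\lambda^{1}\{A\})'_{\beta} \cong \varprojlim l^{\infty}(1/a^{\lambda})$, and an analogous description is available for $(\lambda^{\infty}\{A\})'_{\beta}$. The transposed maps then give a factorization $\iota' = T' \circ S' : (\lambda^{\infty}\{A\})'_{\beta} \to (\lambda^{1}\{A\})'_{\beta}$ through the nuclear Fr\'{e}chet space $E'_{\beta}$, and rerunning the argument of part $(a)$ on this dual Fr\'{e}chet picture yields condition $\{\condN\}$ on the original set $A$. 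Lemma~\ref{l:NuclKothe} then implies the nuclearity of $\lambda^{1}\{A\}$.

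The main obstacle I expect is the identification step: in part $(a)$ one must verify carefully that the nuclear extension $\widetilde{\iota}$ coincides with the canonical Banach-space inclusion (via density of finitely supported sequences in $l^{1}(a^{\mu})$), and in part $(b)$ one must pin down the strong duals explicitly and check that the transpose of $\iota$ matches the natural inclusion between the dual K\"othe-type Fr\'{e}chet spaces, so that the reduction to the argument of part $(a)$ is legitimate.
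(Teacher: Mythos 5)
Your reduction in part (a) up to the point ``there exist $\mu$ and a nuclear operator $\widetilde{\iota}:l^{1}(a^{\mu})\to l^{\infty}(a^{\lambda})$ which coincides with the canonical inclusion'' is fine (the paper itself does not prove the lemma; it quotes it from \cite[Lemma 5.5]{D-N-V-NuclGSSpKernThm}, a result in the spirit of Petzsche). The fatal problem is the ``core observation'' you then invoke: it is false, and it is false precisely in the case you need. As a general statement, ``$T:l^{1}\to Y$ bounded is nuclear iff $\sum_{j}\|Te_{j}\|_{Y}<\infty$'' already fails for the rank-one operator $c\mapsto(\sum_{j}c_{j})\,y_{0}$. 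For the specific operator at hand, identify $l^{1}(a^{\mu})\cong l^{1}$ and $l^{\infty}(a^{\lambda})\cong l^{\infty}$ isometrically; the canonical inclusion becomes the diagonal operator $D_{d}$ with $d_{j}=a^{\lambda}_{j}/a^{\mu}_{j}$. Now $D_{d}:l^{1}\to l^{\infty}$ is nuclear as soon as $d\in c_{0}$, not only when $d\in l^{1}$: from $\delta_{ij}=2^{-n}\sum_{\epsilon\in\{\pm1\}^{n}}\epsilon_{i}\epsilon_{j}$ one gets that the identity $l^{1}_{n}\to l^{\infty}_{n}$ has nuclear norm $\leq 1$ uniformly in $n$, hence (by the ideal property) any finite diagonal has nuclear norm $\leq\sup_{j}|d_{j}|$; splitting $d\in c_{0}$ into blocks on which $\sup_{j}|d_{j}|\leq 2^{-k}$ shows the finite sections of $D_{d}$ are Cauchy in the nuclear norm, so $D_{d}$ is nuclear. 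Consequently, the only information your argument extracts is: for every $\lambda$ there is $\mu$ with $a^{\lambda}/a^{\mu}\in c_{0}$. This Schwartz-type condition is strictly weaker than $(\condN)$ (take $a^{\lambda}_{j}=(\log(j+2))^{1/\lambda}$: the quotients tend to $0$ but are never summable), so the step ``the core observation yields $a^{\lambda}/a^{\mu}\in l^{1}$'' is exactly where the proof breaks, and Lemma \ref{l:NuclKothe} cannot be invoked.

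The gap is structural rather than cosmetic: any argument that uses the nuclearity of $E$ only through ``continuous maps from $E$ into a single Banach space are nuclear'' and then looks at one Banach step $l^{1}(a^{\mu})\to l^{\infty}(a^{\lambda})$ can never see more than the $c_{0}$ condition; the hypothesis that the \emph{whole} inclusion $\iota$ factors through \emph{one} nuclear space must be exploited globally, which is the actual content of Petzsche's trick as used in \cite[Lemma 5.5]{D-N-V-NuclGSSpKernThm}. Part (b) inherits the same defect, since you propose to rerun the argument of (a) on the dual side; moreover the dualization itself is shakier than you indicate, because $(\lambda^{\infty}\{A\})^{\prime}_{\beta}$ is not a projective limit of the spaces $l^{1}(1/a^{\lambda})$ (the dual of $l^{\infty}(a^{\lambda})$ is much larger than $l^{1}(1/a^{\lambda})$), so identifying $\iota^{\prime}$ with a natural inclusion between echelon-type Fr\'echet spaces would require genuine additional work even if the core equivalence were available.
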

	
	\begin{proof}[Proof of Theorem \ref{t:NuclearityBBSp} $(ii) \Rightarrow (i)$]
		As $\S^{[\V]}_{[\W]}$ and $\S^{[\check{\W}]}_{[\V]}$ are isomorphic via the Fourier transform, it suffices to show that $\W$ satisfies $[\condN]$. We will apply Lemma \ref{l:PetzscheTrick} with $A = A_{\W}$ and $E = \S^{[\V]}_{[\W]}$ (in the Roumieu case we use the well-known fact that the strong dual of a nuclear $(DF)$-space is nuclear). Consider for an arbitrary but fixed $\varphi_{0} \in \S^{[\V]}_{[\W]}$ the mappings
			\[ T : \lambda^{1}[A_{\W}] \rightarrow \S^{[\V]}_{[\W]} : \quad c \mapsto \sum_{j \in \Z^{d}} c_{j} \varphi_{0}(\cdot - j) , \]
		and
			\[ S : \S^{[\V]}_{[\W]} \rightarrow \lambda^{\infty}[A_{\W}] : \quad \varphi \mapsto \left( \int_{[0, \frac{1}{2}]^{d}} \varphi(x + j) dx \right)_{j \in \Z^{d}} ,  \]
		which are obviously well-defined and continuous due to condition $[\condM]$. Now note that if the $\varphi_{0} \in \S^{[\V]}_{[\W]}$ is such that
			\begin{equation}
				\label{eq:Condphi0}
				\int_{[0, \frac{1}{2}]^{d}} \varphi_{0}(x + j) dx = \delta_{j, 0} , \qquad j \in \Z^{d} ,
			\end{equation}
		then $\iota = S \circ T$, whence it would follow from Lemmas \ref{l:NuclKothe} and \ref{l:PetzscheTrick} and \eqref{eq:WeighFuncSystemCondNEquivKotheSet} that $\W$ satisfies $[\condN]$. Let us thus verify that such a $\varphi_{0}$ exists. Take any $\varphi \in \S^{[\V]}_{[\W]} \cap \S^{[\V]}_{[\W], 1}$ such that $\varphi(0) = 1$, which is possible due to our assumption of $\S^{[\V]}_{[\W]} \neq \{0\}$ and Lemma \ref{l:BBSpNonTrivialEquiv}. By possibly replacing $\varphi$ by $(\varphi * \phi_{0}) \cdot \widehat{\phi}_{1}$, where $\phi_{0}, \phi_{1} \in \D(\R^{d})$ are such that $\varphi * \phi_{0}(0) = 1$ and $\widehat{\phi}_{1}(0) = 1$, we may assume that $\partial^{\alpha} \varphi$ exists for any $\alpha \in \N^{d}$ and moreover $\partial^{\alpha} \varphi \in \S^{[\V]}_{[\W]}$. Now set
			\[ \chi(x) = 2^{-d} \int_{[-1, 1]^{d}} e^{-2 \pi i x \cdot t} dt  , \qquad x \in \R^{d} . \]
		Then, $\chi(j / 2) = \delta_{j, 0}$ for all $j \in \Z^{d}$. Hence, $\psi = \varphi \chi \in \S^{[\V]}_{[\W]}$ as well as all its derivatives, and $\psi(j / 2) = \delta_{j, 0}$ for all $j \in \Z^{d}$. Then, $\varphi_{0} = (-1)^{d} \partial^{d} \cdots \partial^{1} \psi$ is an element of $\S^{[\V]}_{[\W]}$ satisfying \eqref{eq:Condphi0}.
	\end{proof}

	\begin{remark} 
		It should be noted that the condition $[\condSq]$ is not needed for the proof above of the implication $(ii)\Rightarrow(i)$ of Theorem \ref{t:NuclearityBBSp}.
	\end{remark}

\end{document}